\newtheorem{theorem}{Theorem}[section]
\newtheorem{corollary}[theorem]{Corollary}
\newdefinition{definition}{Definition}[section]
\newtheorem{lem}[theorem]{Lemma}
\newtheorem{proposition}[theorem]{Proposition}
\newdefinition{remark}[theorem]{Remark}
\newenvironment{proof}[1][Proof]{\noindent\textsc{#1.} }{$\Box$}
\newcommand{\ind}[1]{\mathds{1}_{\{#1\}}}
\newcommand{\R}{\mathbb{R}}
\renewcommand{\L}{\mathrm{L}}
\renewcommand{\d}{\,\mathrm{d}}
\newcommand{\ds}{\displaystyle}
\newcommand{\supp}{\mathop{\rm supp}\,}
\newcommand{\e}{\,\mathrm{e}\,}
\newcommand{\dy}{\,\mathrm{d}y}
\renewcommand{\d}{\,\mathrm{d}}
\newcommand{\dist}{\mathrm{dist}}
\newcommand{\eps}{\varepsilon}
\renewcommand{\leq}{\leqslant}
\renewcommand{\geq}{\geqslant}
\newcommand{\pe}[2]{(#1\cdot#2)}
\definecolor{mauve}{rgb}{0.4,0,0.4}
\newcommand{\mauve}[1]{{\color{mauve} #1}}
\let\margintemp=\marginpar
\renewcommand{\marginpar}[1]{\margintemp{\tiny\textbullet\;\mauve{#1}}}
\journal{}
\newcommand{\tr}{\mathop{\rm Tr}}
\newcommand{\Hess}{H^{\rm ess}}
\newcommand{\Less}{L^{\rm ess}}
\newcommand{\ang}{\mathop{\rm ang}}
\begin{document}

\begin{frontmatter}



\title{Large Deviations estimates for some non-local equations. General bounds and applications}


\author[cb]{C. Br\"andle}
\ead{cbrandle@math.uc3m.es}
\author[ec]{E. Chasseigne\corref{cor1}}
\ead{echasseigne@univ-tours.fr}
\address[cb]{Departamento de Matem{\'a}ticas, U.~Carlos III de Madrid,
28911 Legan{\'e}s, Spain}
\address[ec]{Laboratoire de Math\'ematiques et Physique Th\'eorique, U.~F. Rabelais, Parc de Grandmont, 37200 Tours, France\\
}

\cortext[cor1]{Corresponding author}

\begin{abstract}
Large deviation estimates for the following  linear parabolic equation are studied:
\[
\frac{\partial u}{\partial t}=\tr\Big( a(x)D^2u\Big) + b(x)\cdot D u +
	\int_{\R^N} \Big\{(u(x+y)-u(x)-(D u(x)\cdot y)\ind{|y|<1}(y)\Big\}\d\mu(y)\,,
\]
where $\mu$ is a L\'evy measure (which may be singular at the origin).
Assuming only that some negative exponential integrates with respect to the tail of $\mu$, it is shown that
given an initial data, solutions defined in a bounded domain converge exponentially fast to the
solution of the problem defined in the whole space. The exact rate, which depends strongly on
the decay of $\mu$ at infinity, is also estimated.

\

\textbf{R\'esum\'e}\\[12pt]
Nous donnons ici des estimations de grande d\'eviations pour l'\'equation parabolique lin\'eaire suivante:
\[
\frac{\partial u}{\partial t}=\tr\Big( a(x)D^2u\Big) + b(x)\cdot D u +
	\int_{\R^N} \Big\{(u(x+y)-u(x)-(D u(x)\cdot y)\ind{|y|<1}(y)\Big\}\d\mu(y)\,,
\]
o\`u $\mu$ est une mesure de L\'evy (possiblement singuli\`ere \`a l'origine).
En supposant seulement qu'une exponentielle n\'egative est int\'egrable par rapport \`a la
queue de distribution de la mesure $\mu$, nous montrons que,
\'etant fix\'ee une donn\'ee initiale, les solutions d\'efinies dans un domaine born\'e convergent
vers la solution du probl\`eme dans l'espace tout entier \`a un taux exponentiel.
Le taux de convergence exact, qui d\'epend fortement du comportement de $\mu$ \`a l'infini, est \'egalement
estim\'e.
\end{abstract}

\begin{keyword}
Non-local diffusion, Large deviation \sep Hamilton-Jacobi equation \sep L\'evy operators.

\MSC 47G20 \sep 60F10 \sep 35A35 \sep 49L25
\end{keyword}

\end{frontmatter}

\

\section{Introduction}

The equations we consider in this paper take the following form:
\begin{equation}\label{eq:0}
\frac{\partial u}{\partial t}=\tr\Big( a(x)D^2u\Big) + b(x)\cdot D u +
	\int_{\R^N} \Big\{(u(x+y)-u(x)-(D u(x)\cdot y)\ind{|y|<1}(y)\Big\}\d\mu(y)\,.
\end{equation}
Recently, equations like \eqref{eq:0} involving L\'evy-type non-local terms have been under
thorough investigation by many authors and in many directions. Indeed, they present challenging problems which are not covered by the existing ``local'' vision of pde's.
References are too many to cite, those which are more closely related to our concern are given below.

Our main goal here is to obtain some estimates on how the solutions $u_R$ that are defined
in the ball $B_R=\{|x|<R\}$ approach the solution in $\R^N$ as $R\to\infty$, in the spirit of \cite{BarlesDaherRomano94}.
To sum up briefly, the main results of the present paper show that  provided $\mu$ has at most
an exponential tail, a bound of the following kind always holds as $R\to\infty$:
\begin{equation}\label{est:0}
	|u-u_R|(x,t)\leq \e^{-R f(\ln R)(1 + o(1))}.
\end{equation}
The behaviour of $f$ depends on the tail of $\mu$, but in any case $f$ cannot grow faster than linearly.
This has to be compared to the classical $\exp(-R^2)$-type estimate associated to the heat equation \cite{BarlesDaherRomano94}: the presence of non-local terms implies an order $\exp(-R\ln R)$ at best.

\

\noindent\textbf{Typical examples -} As for \eqref{eq:0}, a first example we have in mind is the convolution equation,
\begin{equation}\label{eq:1}
	\frac{\partial u}{\partial t}=J\ast u -u.
\end{equation}
Here $a,b=0$ and $\mu$ is a L\'evy measure with probability density $J$, which is symmetric and integrable -- see \cite{ChasseigneChavesRossi07,Chasseigne07}. In \cite{BrandleChasseigne08-1},
the authors gave partial answers for this equation when $J$ decays strictly faster than an exponential at infinity. Our results here include and generalize those given in \cite{BrandleChasseigne08-1}. Be carefull that writing the equation
under the form \eqref{eq:1} requires $J$ to be of unit mass, and moreover that in this case $\mu(z)=J(-z)$,
a detail which has to be taken into account when dealing with non-symmetric kernels.

A more sophisticated example is the following:
$$\frac{\partial u}{\partial t}=\Delta u +b\cdot Du +\mathrm{P.V.}\,
\int_{\R^N}\Big\{u(x+y,t)-u(x,t)\Big\}\frac{\e^{-|y|}}{|y|^{N+\alpha}}\dy\,,$$
where P.V. stands for \textit{principal value}. This is the same as putting the \textit{compensator} term $Du(x)\cdot y$
since the measure is symmetric. The measure $J(y)=\e^{-|y|}/|y|^{N+\alpha}$, called
\textit{tempered $\alpha$-stable law}, appears for instance in finance \cite{OksendalSulem}.

It is important
to notice that our framework (which will be fully established in Section~\ref{sect:preliminaries}) does not allow to treat fractional Laplace-type nonlocal terms for which $J(y)=1/|y|^{N+\alpha}$,
$\alpha\in(0,2)$ since we require the tail of $J$ to go to zero at least exponentially fast.

\

\textbf{Existence and uniqueness of solutions -}
Concerning the existence and uniqueness of solutions of~\eqref{eq:0}, first notice that in the case of constant coefficients
$a$ and $b$, this can be derived from a Fourier analysis of the equation.
In the absence of differential terms, i.e., $a=b=0$ and if $\mu$ is not singular at the origin, various results about
existence and uniqueness of solutions in $\R^N$ or bounded domains can be found in
\cite{BrandleChasseigne08-1,BrandleChasseigneFerreira09,ChasseigneChavesRossi07,Chasseigne07}.

Finally, in the case when $a(x)=\sigma\sigma^t$ with $\sigma$ and $b$ Lipschitz, existence and uniqueness of bounded solutions
are proven in \cite{BarlesImbert07} for the elliptic (stationnary) version of the equation under some assumptions
on $\mu$. With little modifications (which essentially amount to changing the $(-\gamma u)$-term in the equation
for a $u_t$-term), the same result holds true for the parabolic version,
and gives existence and uniqueness of solutions in $\R^N$ or in a bounded domain.
See also \cite{AlvarezTourin96,CaffarelliSilvestre07,CaffarelliSilvestre09} for some other results.

We shall not derive here a full theory of existence, uniqueness and comparison for~\eqref{eq:0} which is not the
central point of this paper. Rather, we will simply assume that given an initial data $u_0$ continuous, positive
and bounded, there exists a unique viscosity solution $u(x,t)$ of~\eqref{eq:0} with initial data $u_0$. We also consider for any $R>0$ the solution $u_R$ of~\eqref{eq:0} in $B_R$ with initial data $u_0$ and $u(x,t)=0$ for any $x>R$ (see~\cite{BrandleChasseigne08-1} for details) and assume that for any $R>0$, there exists a unique viscosity
solution $u_R$. Starting from this, we will
now estimate the difference $(u-u_R)$.

\

\textbf{Large deviations -}
Estimate \eqref{est:0} may be seen as a \textit{large deviations} result if one considers the probabilistic viewpoint associated to
the equation. We shall not enter here into details about L\'evy processes and refer for instance to \cite{BertoinBook,ContBook} for a probability-oriented approach, but let us just mention a few facts.

A L\'evy process is a stochastic process $(Y_t)_{t\geq0}$ which has stationary and independent increments:
for any $0<s<t$, $(Y_t-Y_s)$ only depends on $(t-s)$ and is independent of $(Y_{t'}-Y_{s'})$ for non-overlapping time intervals.
An important fact is that $(Y_t)$ is not required to be continuous (in time), 
so that this type of process can model either continuous diffusions or
jump diffusions, and a mix of both. Thus, typical L\'evy processes are the ``usual'' brownian motion, 
the compound Poisson process and the $\alpha$-stable jump process.

Now, if $u(\cdot,t)$ represents the density of the law of $Y_t$, then 
it can be shown, using the L\'evy-Khintchine formula, that the \textit{characteristic function} of $Y_t$, which is
the Fourier transform of $u$, takes the following form
$$\hat{u}(\xi,t)=\e^{t\varphi(\xi)}\,,$$
where the \textit{characteristic exponent} $\varphi$ is the sum of
of $(i)$ a brownian motion, $(ii)$ a drift and $(iii)$ a pure jump process. More precisely, in Fourier variables:
$$\varphi(\xi)= A \xi\cdot\xi+B\cdot\xi+\int\Big(\e^{\mathrm{i}(\xi\cdot y)} -1-\mathrm{i}(\xi\cdot y)\ind{|y|<1}\Big)\d\mu(y)\,,$$
where $\mu$ is a L\'evy measure.
By taking the inverse Fourier transform of $\hat{u}_t=\varphi(\xi)\hat{u}$, we recover exactly equation \eqref{eq:0}: 
Brownian motions are associated to
Laplacian-type diffusion terms $\tr\big( a(x)D^2u\big)$ while a drift
corresponds to the $b(x)\cdot D u$-term. In the case of the compound Poisson process, 
$$\varphi(\xi)=\int \Big(\e^{\mathrm{i}(\xi\cdot y)}-1\Big)J(y)\dy\,,$$ 
where $J\in\L^1$ is the jump distribution of the process so that the non-local term
can be re-written as $J\ast u$, leading to \eqref{eq:1}. 
Finally, in the case of the $\alpha$-stable law $\mu(z)=|z|^{-N-\alpha}$, we recover
the well-known fractional Laplace operator.

In some sense, estimate \eqref{est:0} measures how many processes have escaped the ball $B_R$ between
time $t=0$ and $t=T$ (we refer to \cite{BarlesDaherRomano94} for a precise proof of this assertion
in the case of the Laplacian),
and obtaining an exponential-type estimate is precisely what is called a large deviation result -- see the
introduction of \cite{BrandleChasseigne08-1}, and \cite{Hollander} for more about this.

\

\textbf{Notations - } Throughout the paper we will denote by $(e_i)$ the canonical basis of $\R^N$, while $\nu$ will refer to any unit vector. The euclidean scalar product is denoted as $a\cdot b$ and we use $|\cdot |$ for the associated norm in $\R^N$. We will use $\ang(a,b)$ to denote the angle between the vectors $a$ and $b$ (see~Definition~\ref{def:angle}) and $a\to a^T$ is the transposition. The notation $B(x,r)$ stands, as is usual, for the ball of radius $r>0$, centered at $x\in\R^N$ and we also use the simpler notation $B_r=B(0,r)$. We also denote $Du\in\R^N$ the gradient of $u$ and $D^2u\in\mathcal{M}_N(\R)$ the Hessian matrix of $u$. In all the paper, $c(J)$ denotes any constant which only depends on $J$, possibly varying from one line to another. The notation $J_1\curlyeqprec J_2$ essentially means that $J_1\leq J_2$ in $\R^N$, but we add some strict separation inside a small region (see Definition \ref{def:ess.ordered}). We denote by $a\wedge b$ the infimum of $a$ and $b$.

\

\textbf{Main results - }
We have first a general result, Theorem \ref{thm:est-IR}, which gives a theoretical bound in terms of a rate function $I_\infty$, typical of large deviation estimates:
as $R\to\infty$,
\begin{equation*}
	|u-u_R|(x,t)\leq \e^{-R I_\infty(x/R,t/R)+o(1)R}\,.
\end{equation*}
The rate function $I_\infty$ satisfies the limit Hamilton-Jacobi problem
\begin{equation}
    \label{eq:Iinfty}
		\begin{cases}
			\partial_t I_\infty + H(D I_\infty)=0 & \text{in }B_1\times(0,\infty),\\
			I_\infty=0&\text{on }\partial B_1\times(0,\infty),\\
			I_\infty(x,0)=+\infty & \text{in }B_1,
		\end{cases}
\end{equation}
where the Hamiltonian associated to \eqref{eq:0} is given by:
\begin{equation}
	\label{eq:hamiltonian:full}
	H(p):=\tr\big(A\,p\otimes p) + B\cdot p+ \int_{\R^N}\Big(\e^{p\cdot y}-1-(p\cdot y)\ind{|y|<1}(y)\Big)\d\mu(y)\,.
\end{equation}
The idea of the proof is to suitably rescale the problem for $u-u_R$ (which is not the same as in the case of
the local heat equation, though) and to derive an equation for the log-transform;
then pass to the limit as $R\to\infty$.

The Lagrangian $L$ associated to $H$ is defined as usual as
\begin{equation}
  \label{eq:lagrangian}
  L(q):=\sup_{p\in\R^N}\big\{p\cdot q-H(p)\big\}\,,
\end{equation}
and using a Lax-Oleinik formula, see~\cite{LionsBook}, we obtain a semi-explicit expression for $I_\infty$, see~\eqref{est:IR} in terms of $L$.

In our present framework, obtaining \eqref{eq:Iinfty} is more involved than in~\cite{BrandleChasseigne08-1}, since we have to deal with the differential terms and the singular part in $H$.
Hence, we face several difficulties that come from the generalization of the equation, which imply non trivial adaptations and new developments of some aspects of the theory. Let us briefly explain the main points we have to deal with:
\begin{enumerate}
\item  \textit{Singular measures at the origin:}
	using kernels with singularities requires first a suitable concept of solution;
    we use here the notion of viscosity solutions derived in \cite{BarlesChasseigneImbert07} for L\'evy-type operators,
    which allows us to handle this situation.
    Then we introduce what we call the \textit{essential Hamiltonian}, $\Hess$, wiping out the
	singularity at the origin, see Definition~\ref{def:ess:hamiltonian}, and prove that
	asymptotically $H$ and $\Hess$ are equivalent (and the same holds for the associated Lagrangians).

\item \textit{Non symmetric kernels:} we want to estimate $L$ in order to obtain an expression for $I_\infty$. This needs an extra effort in this case: since we do not have symmetry, the vectors $p$ and $DH(p)$ do not necessarily point in the same direction. Hence dot products can not be simplified and we have to carefully analyze the angle between vectors. We prove some results valid for non symmetric kernels $J$ at the logarithm level, and others using the smallest symmetric kernel above $J$.

\item \textit{Differential terms:} the first two terms in equation~\eqref{eq:0} do not introduce much difficulties with respect to~\cite{BrandleChasseigne08-1} but we have to deal with them with the notion of viscosity solution. To be able to pass
    to the limit, we will assume that $a$ and $b$ have limits at infinity.

\item \textit{Possibly infinite Hamiltonians:} in the case when $J(y)\sim \e^{-\alpha |y|}$ as $|y|\to\infty$, $H$ is infinite  outside $B_\alpha$, which makes the analysis of~\eqref{eq:Iinfty} much more delicate, since an initial layer appears. Thus, one of the major contributions of this paper is to provide existence, uniqueness and comparison results for the Hamilton-Jacobi equation $u_t+H(Du)=0$ when $H$ its infinite outside the a ball.

\end{enumerate}



\textbf{Organization -}
After a preliminary section, Section~\ref{sect:preliminaries}, in which some properties of the generalized non-local equation as well as for the  Hamiltonian are stated, we devote Section~\ref{sect:theoretical} to the theoretical behaviour of the problem, where we prove Theorem~\ref{thm:est-IR}.
Then we concentrate our efforts in finding a behaviour for $L$, that allows us to give more explicit convergence rates for $u-u_R$. We deal with this issue in sections~\ref{sect:compact},~\ref{sect:intermediate} and~\ref{sect:critical}, where we consider different types of measures (compactly supported, intermediate, exponentially decaying), which lead to different behaviours. Section~\ref{sect:critical} especially deals with critical kernels for which $H$ is not finite everywhere; so we devote a big part of this section to treat the Hamilton-Jacobi equation with such $H$. Finally in Section~\ref{sect:comments} we collect some further results: we give some explicit calculations of rates and consider totally asymmetric measures in 1-D. We also explain how our methods allow to obtain some estimates for the related nonlocal KPP-equation.

%
%
%
%
%

\section{Preliminaries}\label{sect:preliminaries}
\setcounter{equation}{0}

Let us focus now on the concrete hypothesis  that concern the  equations we have in mind.
For $\mu,a,b,A,B$ in~\eqref{eq:0} and~\eqref{eq:hamiltonian:full} there are essentially three assumptions we shall make throughout the paper:

\textbf{Hypothesis 1.} The measure $\mu$ is a L\'evy measure with density $J(\cdot)$ satisfying
\begin{eqnarray}\label{eq:hyp:J}\begin{cases}
	J:\R^N\setminus\{0\}\to\R_+\text{ is }\mathrm{C}^1\text{-smooth},\\[2mm]
	\exists\rho_0>0\,,\ \supp(J)\supset B_{\rho_0},\\[2mm]
	\displaystyle\int_{\R^N} (1\wedge|y|^2)J(y)\dy<\infty\,.
	\end{cases}
\end{eqnarray}
 Hence in the sequel we rewrite~\eqref{eq:hamiltonian:full} as
\begin{equation}
  \label{eq:H:J}
  H(p):=\tr\big(A\,p\otimes p) + B\cdot p+ \int_{\R^N}\Big(\e^{p\cdot y}-1-(p\cdot y)\ind{|y|<1}(y)\Big)J(y)\dy\,.
\end{equation}
The last assumption in~\eqref{eq:hyp:J} implies in particular that the integral in \eqref{eq:H:J} over $\{|y|<1\}$ converges.

\textbf{Hypothesis 2.} The $N\times N$-matrix $a(\cdot)$ is continuous and nonnegative, $b(\cdot)$ is a continuous vector field in $\R^N$
and we also assume that the following limits are well-defined:
\begin{equation}\label{eq:limit:differentials}
	A=\lim_{|x|\to\infty} a(x)\in\mathcal{M}_n(\R)\,,\quad B=\lim_{|x|\to\infty} b(x)\in\R^N\,,
\end{equation}
(notice that necessarily $A$ is nonnegative).

Except in Section \ref{sect:critical}, we also make the following assumption on $J$:

\textbf{Hypothesis 3.}
\begin{equation}\label{eq:hyp2:J}
	\forall\beta>0\,,\quad \int_{\{|y|>\rho_0/2\}} \e^{\beta|y|}J(y)\dy<\infty\,.
\end{equation}
This ensures that the exponential part of the Hamiltonian in \eqref{eq:hamiltonian:full} (or~\eqref{eq:H:J})
converges for any $p\in\R^N$.
In Section \ref{sect:critical}, we shall only assume that
\begin{equation}\label{eq:hyp3:J}
	\sup\big\{\beta>0: \int_{\{|y|>\rho_0/2\}} \e^{\beta|y|}J(y)\dy<\infty\big\}=\beta_0>0\,,
\end{equation}
which is the case for exponential-type tails, and implies that the associated Hamiltonian is
not everywhere-defined.

Most of our results would be also valid in a more general setting, for instance we could relax
the $\mathrm{C}^1$ condition on $J$, but this is not a major matter for this paper. There are also some
specific other assumptions that we shall make in Section \ref{sect:intermediate}.


\subsection{Viscosity solutions of the equation}

Equations like \eqref{eq:0} can be treated by various ways, depending on the coefficients $a(\cdot)$ and
$b(\cdot)$ and whether the measure $\mu$ is singular
at the origin or not. Since we make only general assumptions, the best tool in order to
deal with differential terms and a singular measure is provided by the notion of \textit{viscosity solutions}.

Essentially, the notion of viscosity solution allows to give sense to the differential terms as well as
the integral term without knowing \textit{a priori} that the solution is regular, using comparison with smooth test functions.
Notice however that in the integral term, one only needs to replace $u$ by a smooth function in a small
neighborhood of $y=0$. We refer to \cite{BarlesImbert07,CaffarelliSilvestre07,CaffarelliSilvestre09} for more results on viscosity solutions in presence of singular measures. So, if we set
\[
    \begin{aligned}
	\mathrm{Eq}[u,\phi,\delta](x_0,t_0)\  :=&\  \frac{\partial \phi}{\partial t}(x_0,t_0)-\tr\Big( a(x)D^2\phi(x_0,t_0)\Big)
	- b(x_0)\cdot D \phi(x_0,t_0) \\
	-&\int_{\{|y|<\delta\}} \Big\{(\phi(x_0+y,t_0)-\phi(x_0,t_0)-(D \phi(x_0,t_0)\cdot y)\Big\}J(y)\dy\\
	-&\int_{\{|y|\geq\delta\}} \Big\{(u(x_0+y,t_0)-u(x_0,t_0)-(D \phi(x_0,t_0)\cdot y)\ind{|y|<1}\Big\}J(y)\dy,
\end{aligned}
\]
 we then introduce the following definition:

\begin{definition}
A locally bounded u.s.c. function $u : \R^N\times[0,\infty) \to \R $ is a subsolution of \eqref{eq:0} iff, for any $\delta\in(0,1)$ and any test function
$\phi\in C^2(\R^N\times\R_+)$, at each maximum point $(x_0,t_0)\in\R^N\times\R_+$ of $u-\phi$, we have
\[
    \mathrm{Eq}[u,\phi,\delta](x_0,t_0)\leq 0.
\]
A locally bounded l.s.c. viscosity supersolution is defined in the same way with reversed inequality and $\min$ instead of $\max$.
Finally, a viscosity solution is a function which the upper and lower semicontinuous envelopes are respectively sub- and supersolution of the problem.
\end{definition}

When intial and/or boundary data are involved, the definition takes into account that at a boundary point, either
the inequation has to hold, or the boundary data has to be taken in the sub/super solution sense. If $\Omega$ is the domain of definition, $f$ is the boundary data (in the sense of nonlocal equations) and $u(x,0)=u_0(x)$ is the initial data of the problem we define:

\begin{definition}\label{def:relaxed:boundary}
A locally bounded u.s.c. function $u : \R^N\times[0,\infty) \to \R $ is a subsolution of \eqref{eq:0} with boundary data $u=f$ in
$(\R^N\setminus\Omega)\times(0,\infty)$ and initial data $u(x,0)=u_0(x)$ iff,
for any $\delta>0$ and any test function $\phi\in C^2(\R^N\times(0,\infty))$, at each maximum point $(x_0,t_0)\in\Omega\times[0,\infty)$ of $u-\phi$, we have
\begin{eqnarray*}
(x_0,t_0)\in \Omega\times(0,\infty)&\Rightarrow&\mathrm{Eq}[u,\phi,\delta](x_0,t_0)\leq 0\\
(x_0,t_0)\in \partial\Omega\times(0,\infty)&\Rightarrow& \min\Big(\mathrm{Eq}[u,\phi,\delta](x_0,t_0)\, ;\, u(x_0,t_0) - f(x_0,t_0)\Big) \leq 0\\
(x_0,t_0)\in \Omega\times\{0\}&\Rightarrow& u(x_0,t_0)\leq u_0(x_0)
\end{eqnarray*}
A supersolution is defined with reversed inequalities and min/max changed accordingly,
and a solution is such that its l.s.c./u.s.c. envelopes are sub/super solutions of the equation.
\end{definition}

\subsection{Hamiltonians}

We shall show now some properties of the specific Hamiltonians we consider:

\begin{lem}\label{lem:hamiltonian:properties}
	Let $J$ be a kernel satisfying \eqref{eq:hyp:J}. Then
	the Hamiltonian $H$ is superlinear and strictly convex, and thus the associated Lagrangian
	$L$	is well-defined, convex and superlinear.
\end{lem}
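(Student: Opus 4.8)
The plan is to establish each claimed property of $H$ directly from the formula~\eqref{eq:H:J}, and then deduce the properties of $L$ by classical convex-duality arguments.

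\textbf{Step 1: convexity of $H$.} I would observe that $H$ is a sum of three terms, and check convexity of each. The term $B\cdot p$ is linear, hence convex. The term $\tr(A\,p\otimes p) = \langle Ap,p\rangle$ is a quadratic form associated to the nonnegative matrix $A$ (Hypothesis~2 guarantees $A\geq 0$), hence convex. For the integral term, the key point is that for each fixed $y$, the map $p\mapsto \e^{p\cdot y}-1-(p\cdot y)\ind{|y|<1}$ is convex in $p$: indeed $p\mapsto\e^{p\cdot y}$ is convex (composition of the convex exponential with the linear map $p\mapsto p\cdot y$) and the subtracted terms are affine in $p$. Integrating a family of convex functions against the nonnegative measure $J(y)\dy$ preserves convexity. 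Thus $H$ is convex. For \emph{strict} convexity one argues that the integrand is strictly convex in $p$ along any direction $y$ with $p\cdot y\neq$ the relevant value, and since $\supp(J)\supset B_{\rho_0}$ (second line of~\eqref{eq:hyp:J}), for any two distinct points $p_1\neq p_2$ there are directions $y\in B_{\rho_0}$ with $(p_1-p_2)\cdot y\neq 0$, giving strict inequality in the midpoint estimate after integration. (Alternatively, if $A>0$ one already gets strict convexity from the quadratic term, but we should not assume that, so the jump part is what we rely on in general.)

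\textbf{Step 2: superlinearity of $H$.} I need $H(p)/|p|\to\infty$ as $|p|\to\infty$. The linear term $B\cdot p$ is $O(|p|)$ and the quadratic term is $\geq 0$, so it suffices to show the integral term is superlinear. Fix a unit vector $\nu=p/|p|$. Restricting the integral to the cone $C_\nu=\{y\in B_{\rho_0/2}: y\cdot\nu \geq |y|/2\}$, which carries positive $J$-mass bounded below uniformly in $\nu$ by smoothness and positivity of $J$ on $B_{\rho_0}$, we get on that set $\e^{p\cdot y}\geq \e^{|p||y|/2}$, while the compensator term $(p\cdot y)\ind{|y|<1}$ is only $O(|p|)$ after integration (since $\int(1\wedge|y|^2)J<\infty$ and here $|y|<1$ so $|p\cdot y|\le|p||y|$). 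A lower bound of the form $c\int_{C_\nu}(\e^{|p||y|/2}-1)J(y)\dy - C|p|$ blows up faster than linearly in $|p|$ (e.g. dominate $\e^{|p||y|/2}-1$ below by a multiple of $|p||y|$ for small $|y|$, or better just keep the exponential), hence $H(p)-B\cdot p-\tr(A p\otimes p)\to +\infty$ superlinearly; note the subtracted quadratic is harmless since it is being added, not subtracted — so in fact $H(p)\geq c|p|\cdot(\text{something}\to\infty) - C|p|$. This gives superlinearity of $H$.

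\textbf{Step 3: properties of $L$.} Given that $H:\R^N\to\R$ is convex, finite (Hypothesis~3 makes the integral converge for every $p$), and superlinear, the Legendre--Fenchel transform $L(q)=\sup_p\{p\cdot q-H(p)\}$ is finite everywhere (finiteness uses superlinearity of $H$, which forces the supremum to be attained), convex (a supremum of affine functions of $q$), and superlinear (this is the standard duality: $H$ finite everywhere $\Rightarrow$ $L$ superlinear, and $H$ superlinear $\Rightarrow$ $L$ finite everywhere). I would simply invoke these classical facts from convex analysis, citing e.g. \cite{LionsBook}, rather than reprove them.

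\textbf{Main obstacle.} The only genuinely non-routine point is the superlinearity of the \emph{integral} part of $H$ uniformly in the direction of $p$ — one must localize to a well-chosen cone around the direction $\nu=p/|p|$ on which $J$ is bounded below and on which $p\cdot y$ is comparable to $|p||y|$, and then check that the compensator term $(p\cdot y)\ind{|y|<1}$, which is the only term that could cancel growth, contributes at most $O(|p|)$. Strict convexity when $A$ is merely nonnegative (possibly zero) is the second point requiring a little care, and there the condition $\supp(J)\supset B_{\rho_0}$ is exactly what saves us.
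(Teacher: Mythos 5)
Your Steps 1 and 3 are correct and essentially match the paper (the paper checks strict convexity by computing $D^2H(p)=A+\int(y\otimes y)\e^{p\cdot y}J(y)\dy>0$ rather than by a midpoint argument, but the content is the same, and the duality facts for $L$ are exactly what the paper invokes from Rockafellar). The genuine gap is in Step 2, in the sentence claiming that the compensator term $(p\cdot y)\ind{|y|<1}$ is ``only $O(|p|)$ after integration (since $\int(1\wedge|y|^2)J<\infty$ and here $|y|<1$ so $|p\cdot y|\le|p||y|$)''. That deduction needs $\int_{\{|y|<1\}}|y|J(y)\dy<\infty$, which is strictly stronger than the L\'evy condition $\int_{\{|y|<1\}}|y|^2J(y)\dy<\infty$ assumed in~\eqref{eq:hyp:J}. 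For the tempered $\alpha$-stable kernels $J(y)=\e^{-|y|}|y|^{-N-\alpha}$ with $\alpha\in[1,2)$ --- one of the paper's motivating examples --- that first moment diverges, so the compensator is not separately integrable: your decomposition of the integral into an ``exponential part'' minus a ``compensator part'' is an $\infty-\infty$ expression, and the claimed $-C|p|$ error term does not exist. Measures singular at the origin are precisely one of the cases the lemma must cover.

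The gap is repairable without changing your strategy, provided you never split the integrand on $\{|y|<1\}$. For the lower bound, use the pointwise inequality $\e^{t}-1-t\geq0$: the entire contribution of $\{|y|<1\}$ is then nonnegative and can be dropped, while $\{|y|\geq1\}$ contributes at least $-\int_{\{|y|\geq1\}}J>-\infty$. The superlinear growth is then extracted from a subcone kept away from the origin, e.g. $\{\rho_0/4<|y|<\rho_0/2,\ y\cdot\nu\geq|y|/2\}$, on which the integrand is at least $\e^{\rho_0|p|/8}-1-\rho_0|p|/2$ and $J$ has positive mass bounded below uniformly in $\nu$. (For the complementary upper estimate of the near-origin contribution, the paper keeps the three terms together via a second-order Taylor expansion, which produces the $|y|^2$ weight that the L\'evy condition controls; your suggestion to ``dominate $\e^{|p||y|/2}-1$ below by a multiple of $|p||y|$ for small $|y|$'' runs into the same non-integrability of $|y|J(y)$ near the origin, which is another reason the cone must be truncated away from $y=0$.)
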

\begin{proof}
	A straightforward calculus shows that in the sense of matrices,
	$$D^2H(p)=A + \int_{\R^N}(y\otimes y)\e^{p\cdot y}J(y)\dy>0$$
	so that $H$ is strictly convex. Now, one can easily check that
	$$H(p)=O(|p|^3)+\int_{\{|y|>1\}}\e^{p\cdot y}J(y)\dy\,.$$
	Indeed, the differential terms are clearly of the order of $|p|^2$ and $|p|$ respectively,
	while the integral over $|y|<1$ can be bounded by:
	$$\int_{\{|y|<1\}}\Big(\e^{p\cdot y}-1-(p\cdot y)\Big)J(y)\dy\leq
	C|p|^2\int_{\{|y|<1\}}\frac{|y|^2}{2}(1+O(p\cdot y))J(y)\dy\leq C|p|^3\,.$$
	On the other hand,
	$$\int_{\{|y|>1\}}\e^{p\cdot y}J(y)\dy\geq\int_{\{p\cdot y>\rho_0|p|/2\}}\e^{p\cdot y}J(y)\dy
	\geq c(J)\e^{\rho_0|p|/2}\,,$$
	so that indeed $H$ is superlinear. It is well-known (see~\cite{Rockafellar} for instance) that
    if $H$ is strictly convex and superlinear, so does $L$.
\end{proof}

We shall go a bit further in this direction, using that the main contribution of the Hamiltonian comes from
the exponential term in the integral. So let us define the \textit{essential part} of
the Hamiltonian and the corresponding Lagrangian:
\begin{definition}\label{def:ess:hamiltonian}
	We denote by
	\[
		\Hess(p):=\int_{\{|y|>\rho_0/2\}}\e^{p\cdot y}J(y)\dy\quad\text{and}\quad
		\Less(q):=\sup_{p\in\R^N}\big\{p\cdot q-\Hess(p)\big\}\,,
	\]
    where $\rho_0$ is defined in \eqref{eq:hyp:J}.
\end{definition}
The reason why we integrate over $\{|y|>\rho_0/2\}$ is that we want to avoid the singularity at the origin,
but we also want to be sure to integrate within the support of $J$ when it is compactly supported. The condition
$\supp(J)\supset B_{\rho_0}$ ensures that $\{|y|>\rho_0/2\}\cap\supp(J)\neq\emptyset$.

We have first a basic estimate, similar to the one that was used in Lemma \ref{lem:hamiltonian:properties}:

\begin{lem}
	\label{lemma:H:basic}
	Let $J$ be a kernel satisfying \eqref{eq:hyp:J} and \eqref{eq:hyp3:J}. Then we have,
	\begin{eqnarray*}	
	\Hess(p)&\geq& c(J)\e^{\rho_0|p|/2}\,,\\
	p\cdot D \Hess(p)&\geq& c(J)(\rho_0|p|/2)\e^{\rho_0|p|/2}+O(|p|)\,.
	\end{eqnarray*}
	Moreover there exists $\eps>0$ such that for any unit vector $\mathbf{\nu}$,
	$$\mathbf{\nu}^TD^2 \Hess(p)\,\mathbf{\nu}\geq \eps c(J)\e^{|p|\rho_0/2}$$
	and the same result holds for $D^2H=A+D^2\Hess\geq D^2\Hess$.
\end{lem}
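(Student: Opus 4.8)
The plan is to exploit the same "restrict the integration domain to a favorable half-space" trick used in Lemma \ref{lem:hamiltonian:properties}, now keeping track of the extra factors coming from one and two derivatives of the exponential. First I would fix $p\neq 0$, write $\nu=p/|p|$, and observe that on the cone $\{y\cdot \nu > \rho_0/4\}\cap\{\rho_0/2<|y|<\rho_0\}$ one has both $y$ inside $\supp(J)$ (by the assumption $\supp(J)\supset B_{\rho_0}$ in \eqref{eq:hyp:J}) and $p\cdot y > \rho_0|p|/4$; since $J$ is $\mathrm{C}^1$ and positive there, this set carries a fixed amount of $J$-mass, giving a lower bound $\int_{\{p\cdot y>\rho_0|p|/4\}} \e^{p\cdot y}J(y)\dy \geq c(J)\e^{\rho_0|p|/4}$. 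A slightly more careful choice of the aperture of the cone — taking $\{p\cdot y>\rho_0|p|/2\cdot(1-\eta)\}$ and letting the radius band approach $\rho_0$ — recovers the sharper exponent $\rho_0|p|/2$ claimed; the same device was already used in Lemma \ref{lem:hamiltonian:properties}. This proves the first inequality.

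For the second, differentiate under the integral sign: $D\Hess(p)=\int_{\{|y|>\rho_0/2\}} y\,\e^{p\cdot y}J(y)\dy$, which is legitimate by \eqref{eq:hyp3:J} (the tail is exponentially integrable to every order below $\beta_0$, and here we only need it past a fixed band). Hence $p\cdot D\Hess(p)=\int_{\{|y|>\rho_0/2\}} (p\cdot y)\,\e^{p\cdot y}J(y)\dy$. Split this into the part where $p\cdot y>\rho_0|p|/2$ and its complement. On the first part the integrand is nonnegative and bounded below exactly as in the previous paragraph but with the extra linear factor $(p\cdot y)\geq \rho_0|p|/2$, yielding $c(J)(\rho_0|p|/2)\e^{\rho_0|p|/2}$. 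On the complement, $p\cdot y \leq \rho_0|p|/2$, so $(p\cdot y)\e^{p\cdot y}$ is bounded above (the function $s\e^{s}$ is bounded on $s\leq \rho_0|p|/2$ only when combined with decay in $y$, so here I would instead bound $|p\cdot y|\,\e^{p\cdot y}\leq |p|\,|y|\,\e^{\rho_0|p|/2}$ on $\{|y|\le 2|y\cdot\nu|/\rho_0\le \dots\}$ — more simply, split once more at $p\cdot y = 0$): for $p\cdot y\leq 0$ we have $0\leq |p\cdot y|\e^{p\cdot y}\leq |p\cdot y|$, contributing at worst $-|p|\int |y|J(y)\dy=O(|p|)$ after using $\int_{\{|y|>\rho_0/2\}}|y|J(y)\dy<\infty$ (again from \eqref{eq:hyp3:J}); and for $0<p\cdot y\leq\rho_0|p|/2$ the integrand is nonnegative and only improves the bound. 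Collecting the two pieces gives $p\cdot D\Hess(p)\geq c(J)(\rho_0|p|/2)\e^{\rho_0|p|/2}+O(|p|)$.

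For the Hessian bound, $D^2\Hess(p)=\int_{\{|y|>\rho_0/2\}} (y\otimes y)\,\e^{p\cdot y}J(y)\dy$, so for any unit vector $\mathbf\nu$, $\mathbf\nu^TD^2\Hess(p)\mathbf\nu=\int_{\{|y|>\rho_0/2\}} (y\cdot\mathbf\nu)^2\,\e^{p\cdot y}J(y)\dy$. The integrand is nonnegative, so I restrict to the set $S:=\{y: p\cdot y>\rho_0|p|/2\}\cap\{\rho_0/2<|y|<\rho_0\}$ used above; on $S$ one has $\e^{p\cdot y}\geq c(J)\e^{\rho_0|p|/2}$ (up to the aperture adjustment), and the remaining integral $\int_S (y\cdot\mathbf\nu)^2 J(y)\dy$ is bounded below by a constant $\eps c(J)>0$ \emph{uniformly in $\mathbf\nu$}. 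This uniformity is the only point needing a word of care: $S$ is a thin spherical cap pointing in the direction $p/|p|$, and for $\mathbf\nu$ nearly orthogonal to $p$ the factor $(y\cdot\mathbf\nu)^2$ could in principle degenerate; but because $S$ has positive $N$-dimensional measure (it is a genuine solid cap of the annulus, not a lower-dimensional slice) and $J$ is bounded below on it, the quadratic form $\mathbf\nu\mapsto\int_S (y\cdot\mathbf\nu)^2 J(y)\dy$ is positive definite on a set that is invariant under the rotation aligning $p$ with a fixed axis, hence its smallest eigenvalue is a rotation-independent constant $\eps c(J)>0$. This gives $\mathbf\nu^TD^2\Hess(p)\mathbf\nu\geq\eps c(J)\e^{\rho_0|p|/2}$, and since $A\geq 0$ we get $\mathbf\nu^TD^2H(p)\mathbf\nu=\mathbf\nu^TA\mathbf\nu+\mathbf\nu^TD^2\Hess(p)\mathbf\nu\geq\mathbf\nu^TD^2\Hess(p)\mathbf\nu$, finishing the proof. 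The main obstacle is precisely this last uniformity-in-$\mathbf\nu$ argument — making sure the annular cap is genuinely $N$-dimensional so that no direction is annihilated — everything else is the by-now-standard half-space restriction.

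$\Box$
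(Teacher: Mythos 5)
Your proposal is correct in substance and follows essentially the same route as the paper: restrict each integral to the half-space $\{p\cdot y>\rho_0|p|/2\}$, which automatically forces $|y|>\rho_0/2$ and meets $B_{\rho_0}\subset\supp(J)$ in a nonempty open set of positive $J$-mass, and handle the complement in the gradient estimate by splitting once more at $p\cdot y=0$ and using $|s|\e^{s}\leq|s|$ for $s\leq0$ together with $\int|y|J(y)\dy<\infty$. Two small repairs: the $\rho_0|p|/4$ and $(1-\eta)$-aperture detour is unnecessary and, as literally written, only yields the exponent $(1-\eta)\rho_0|p|/2$ --- you should restrict directly to $\{p\cdot y>\rho_0|p|/2\}$, which already carries positive $J$-mass, so the sharp exponent comes for free; and for the uniformity in $\mathbf{\nu}$ of the Hessian bound you cannot invoke rotation-invariance of the quadratic form (since $J$ need not be symmetric), but compactness of the unit sphere(s) and continuity of $(p/|p|,\mathbf{\nu})\mapsto\int_{S}(y\cdot\mathbf{\nu})^2J(y)\dy$ give the required uniform positive lower bound, which is essentially how the paper argues via the set $\mathcal{B}=\{p\cdot y>\rho_0|p|/2\}\cap\{|y\cdot\mathbf{\nu}|^2\geq\eps\}$.
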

\begin{proof}
	The first estimate is easily obtained:
	\begin{eqnarray*}
	\Hess(p)&\geq&\int_{\{p\cdot y>\rho_0|p|/2\}}\e^{p\cdot y}J(y)\dy\\
	&\geq&c(J)\e^{\rho_0|p|/2}\,.	
	\end{eqnarray*}
	 Notice that the constant $c(J)$ may be small,
	but  it is still positive since $J$ is continuous and positive inside $\{p\cdot y>\rho_0|p|/2\}$.

	For the second estimate, the proof is essentially the same:
	\begin{eqnarray*}
		p\cdot D \Hess(p)&\geq&\int_{\{p\cdot y>\rho_0|p|/2\}}(p\cdot y)\e^{p\cdot y}J(y)\dy+
			\int_{\{p\cdot y<0\}}(p\cdot y)\e^{p\cdot y}J(y)\dy\\
		&\geq&c(J)(\rho_0|p|/2)\e^{\rho_0|p|/2}+O(|p|)-c(J)|p|\,,	
	\end{eqnarray*}
	where in this case $c(J)=\int_{\{|y|>1\}}|y|J(y)\dy<\infty$ from \eqref{eq:hyp3:J}.
	
	Finally, for any unit vector $e_i$ of the canonical basis of $\R^N$, we compute
	with the same decomposition:
	\begin{eqnarray*}
		e_i^TD^2 \Hess(p)\,e_i=\int_{\{|y|>\rho_0/2\}}|y_i|^2\e^{p\cdot y}J(y)\dy&
		\geq&\int_{\{p\cdot y>\rho_0|p|/2\}}|y_i|^2\e^{p\cdot y}J(y)\dy\,.
	\end{eqnarray*}
	We notice that the set $\{p\cdot y>\rho_0|p|/2\}$ contains the truncated cone:
	$$\Big\{\frac{p\cdot y}{|p||y|}\geq\frac{3}{4}\,,\ |y|>\frac{3\rho_0}{4}\Big\}\,,$$
	hence for some $\eps>0$ small enough (which can be chosen independently of $e_i$),
	$$\mathcal{B}:=\{p\cdot y>\rho_0|p|/2\}\cap \{|y_i|^2\geq\eps\}\not=\emptyset\,,$$
	and since $\mathcal{B}\cap B_{\rho_0}\neq\emptyset$, the integral of $J$ on $\mathcal{B}$
	is positive. So we can estimate the integral from below as follows:
	\begin{eqnarray*}
	e_i^TD^2 \Hess(p)\,e_i&\geq&\eps\e^{|p|\rho_0/2}\int_{\mathcal{B}}J(y)\dy\\
	&\geq&\eps c(J)\e^{|p|\rho_0/2}\,.
	\end{eqnarray*}
    So the result also holds for any unit vector $\mathbf{\nu}$.
\end{proof}

The next estimate will be essential in the sequel:
\begin{lem}
 \label{lemma:H:less:pDHp}
	Let $J$ be a kernel satisfying \eqref{eq:hyp:J} and \eqref{eq:hyp2:J}. Then, for any $\gamma\in(0,\rho_0)$,
  	\begin{equation}\label{est:Hess}
  	\Hess(p)\leq \frac{c(J)}{\gamma}+\frac{p\cdot D\Hess(p)}{\gamma|p|}+\e^{\gamma|p|}\,.
  	\end{equation}
	As a consequence, $\Hess(p)$ is negligible in front of $p\cdot D\Hess(p)$ as $|p|\to\infty$,
	$$
	\lim_{|p|\to\infty}\frac{\Hess(p)}{p\cdot D\Hess(p)}=0\,.
	$$
\end{lem}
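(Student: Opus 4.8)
The plan is to split the region of integration in $\Hess(p)=\int_{\{|y|>\rho_0/2\}}\e^{p\cdot y}J(y)\dy$ according to the sign and size of $p\cdot y$, and then to absorb each piece into one of the three terms on the right of~\eqref{est:Hess}. More precisely, fix $\gamma\in(0,\rho_0)$ and write, with $\hat p=p/|p|$,
\begin{equation*}
\Hess(p)=\int_{\{p\cdot y\leq 0\}}\e^{p\cdot y}J(y)\dy
+\int_{\{0<p\cdot y\leq\gamma|p|\}}\e^{p\cdot y}J(y)\dy
+\int_{\{p\cdot y>\gamma|p|\}}\e^{p\cdot y}J(y)\dy\,,
\end{equation*}
where the first two integrals are implicitly also restricted to $\{|y|>\rho_0/2\}$. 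On the first region $\e^{p\cdot y}\leq 1$, so that integral is bounded by $\int J(y)\dy$ over the relevant set; by Hypothesis~3 (which gives integrability of $J$ away from the origin, and in particular $\int_{\{|y|>\rho_0/2\}}J\dy<\infty$) this is a constant $c(J)$, and I bound it crudely by $c(J)/\gamma$. On the middle region $\e^{p\cdot y}\leq\e^{\gamma|p|}$ and $J\dy$ integrates to at most $\int_{\{|y|>\rho_0/2\}}J\dy\leq c(J)$; absorbing the constant, this contributes at most $\e^{\gamma|p|}$ (up to again enlarging $c(J)$, or one simply keeps the $\e^{\gamma|p|}$ term with a harmless constant — the statement as written tolerates $c(J)$ floating).

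The heart of the matter is the third region $\{p\cdot y>\gamma|p|\}$. There the key elementary observation is that $p\cdot y>\gamma|p|$ forces $\dfrac{p\cdot y}{\gamma|p|}>1$, hence
\begin{equation*}
\int_{\{p\cdot y>\gamma|p|\}}\e^{p\cdot y}J(y)\dy
\leq \frac{1}{\gamma|p|}\int_{\{p\cdot y>\gamma|p|\}}(p\cdot y)\,\e^{p\cdot y}J(y)\dy
\leq \frac{1}{\gamma|p|}\int_{\{p\cdot y>0\}}(p\cdot y)\,\e^{p\cdot y}J(y)\dy\,.
\end{equation*}
Now I compare this last integral with $p\cdot D\Hess(p)=\int_{\{|y|>\rho_0/2\}}(p\cdot y)\,\e^{p\cdot y}J(y)\dy$. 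The difference between $\int_{\{p\cdot y>0\}}(p\cdot y)\e^{p\cdot y}J\dy$ and $p\cdot D\Hess(p)$ is exactly $-\int_{\{p\cdot y<0\}}(p\cdot y)\e^{p\cdot y}J(y)\dy$; on that set $0\leq -(p\cdot y)\e^{p\cdot y}=|p\cdot y|\e^{-|p\cdot y|}\leq |p|\,|y|\e^{-|p\cdot y|}\leq |p|\,|y|$, so this difference is at most $|p|\int_{\{|y|>\rho_0/2\}}|y|\,J(y)\dy=O(|p|)$ by Hypothesis~3. Therefore
\begin{equation*}
\int_{\{p\cdot y>\gamma|p|\}}\e^{p\cdot y}J(y)\dy
\leq \frac{p\cdot D\Hess(p)+O(|p|)}{\gamma|p|}
= \frac{p\cdot D\Hess(p)}{\gamma|p|}+\frac{c(J)}{\gamma}\,.
\end{equation*}
Collecting the three estimates yields~\eqref{est:Hess} (up to consolidating constants into a single $c(J)$, which is permitted by the paper's convention). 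For the consequence, divide~\eqref{est:Hess} by $p\cdot D\Hess(p)$: by Lemma~\ref{lemma:H:basic} we have $p\cdot D\Hess(p)\geq c(J)(\rho_0|p|/2)\e^{\rho_0|p|/2}+O(|p|)$, which grows like $|p|\e^{\rho_0|p|/2}$, so the terms $c(J)/\gamma$ and $\big(p\cdot D\Hess(p)\big)/(\gamma|p|)$ divided by $p\cdot D\Hess(p)$ behave like $O(\e^{-\rho_0|p|/2}/|p|)$ and $O(1/|p|)$ respectively, both tending to $0$; and $\e^{\gamma|p|}/\big(p\cdot D\Hess(p)\big)=O\big(\e^{(\gamma-\rho_0/2)|p|}/|p|\big)$, which tends to $0$ as soon as we have chosen $\gamma<\rho_0/2$ — so we simply impose $\gamma\in(0,\rho_0/2)$ at the outset (any such $\gamma$ will do for the limit statement). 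Hence $\Hess(p)/\big(p\cdot D\Hess(p)\big)\to 0$ as $|p|\to\infty$.

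The only genuine subtlety — and thus the step I would be most careful about — is the treatment of the negative-inner-product set when passing from $\int_{\{p\cdot y>0\}}(p\cdot y)\e^{p\cdot y}J\dy$ to $p\cdot D\Hess(p)$: one must check that the tail $\int_{\{|y|>\rho_0/2\}}|y|J(y)\dy$ is finite, which is precisely what Hypothesis~3 (equation~\eqref{eq:hyp2:J}) guarantees, since $|y|\leq C_\beta\e^{\beta|y|}$ for any $\beta>0$; note Hypothesis~\eqref{eq:hyp:J} alone, giving only $\int(1\wedge|y|^2)J\dy<\infty$, is not enough, which is why the lemma explicitly invokes~\eqref{eq:hyp2:J}. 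Everything else is bookkeeping of constants that the paper's $c(J)$ convention absorbs.
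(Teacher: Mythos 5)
Your proof of the inequality \eqref{est:Hess} is correct and follows essentially the paper's own route: the decisive step --- bounding the integral over $\{p\cdot y>\gamma|p|\}$ by $\frac{1}{\gamma|p|}\int (p\cdot y)\e^{p\cdot y}J\,\mathrm{d}y$ and then recovering $p\cdot D\Hess(p)$ at the cost of the term $\frac{1}{\gamma|p|}\int_{\{p\cdot y\leq0\}}|p||y|J\,\mathrm{d}y\leq c(J)/\gamma$, with $\int_{\{|y|>\rho_0/2\}}|y|J<\infty$ coming from \eqref{eq:hyp2:J} --- is exactly the paper's treatment of its integral $I_2$; your three-way split versus the paper's two-way split ($\{p\cdot y<\gamma|p|\}$ versus $\{p\cdot y\geq\gamma|p|\}$) is immaterial, and the stray multiplicative constant in front of $\e^{\gamma|p|}$ is present in the paper's own proof as well. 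Where you genuinely diverge is in deducing the limit: the paper keeps $\gamma\in(0,\rho_0)$ arbitrary but lets it depend on $p$, setting $\gamma=\mu/|p|$ so that $\e^{\gamma|p|}=\e^{\mu}$ is constant, and concludes $\liminf (p\cdot D\Hess(p))/\Hess(p)\geq\mu$ for every $\mu>1$ using only the superlinear growth of $\Hess$; you instead fix $\gamma<\rho_0/2$ once and for all and kill the term $\e^{\gamma|p|}$ by invoking the stronger lower bound $p\cdot D\Hess(p)\gtrsim|p|\e^{\rho_0|p|/2}$ from Lemma~\ref{lemma:H:basic}. Both deductions are valid; yours is slightly more direct but leans on the second estimate of Lemma~\ref{lemma:H:basic} (hence on the geometric argument locating mass of $J$ in the cone $\{p\cdot y>\rho_0|p|/2\}$), whereas the paper's $\gamma=\mu/|p|$ trick needs only the crude fact that $\Hess$ is superlinear and would survive in settings where no exponential lower bound on $p\cdot D\Hess(p)$ is available.
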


\begin{proof}
	We begin with splitting the integral in two terms,
  $$
 \Hess(p)=\int_{\{p\cdot y<\gamma|p|\}\cap \{|y|\geq \rho_0/2\}}\e^{p\cdot y}J(y)\dy+\int_{\{p\cdot y\geq\gamma|p|\}\cap \{|y|\geq \rho_0/2\}}\e^{p\cdot y}J(y)\dy=I_1+I_2
  $$
	The first integral is estimated as follows:
  $$
  I_1\leq \int_{\{p\cdot y<\gamma|p|\}\cap \{|y|\geq \rho_0/2\}}\e^{\gamma|p|}J(y)\dy\leq c(J)\e^{\gamma|p|}.
  $$
	Now, notice that since $\gamma<\rho_0$, even if $J$ is compactly
	supported, we are sure that the  integral $I_2$ concerns a region where $J$ is not zero, so that we can
	indeed control it by $p\cdot D \Hess(p)$:
  \[
  \begin{aligned}
  I_2\ \leq\ &\frac{1}{\gamma|p|}\int_{\{p\cdot y\geq\gamma|p|\}}(p\cdot y)\e^{p\cdot y}J(y)\dy\\
  =\ &\frac{1}{\gamma|p|}\int_{\mathbb{R}^N}(p\cdot y)\e^{p\cdot y}J(y)\dy-\frac{1}{\gamma|p|}
	\int_{\{p\cdot y\leq\gamma|p|\}}(p\cdot y)\e^{p\cdot y}J(y)\dy\\
  \leq\ & \frac{p\cdot D\Hess(p)}{\gamma|p|}-\frac{1}{\gamma|p|}\int_{\{p\cdot y\leq 0\}}(p\cdot y)\e^{p\cdot y}J(y)\dy-\frac{1}{\gamma|p|}
        \int_{\{0\leq p\cdot y\leq\gamma|p|\}}(p\cdot y)\e^{p\cdot y}J(y)\dy\\
  \leq\  & \frac{p\cdot D\Hess(p)}{\gamma|p|}+\frac{1}{\gamma|p|}\int_{\{p\cdot y\leq 0\}}|p||y|J(y)\dy\\
  \leq\  & \frac{p\cdot D\Hess(p)}{\gamma|p|}+ \frac{c(J)}{\gamma}.
  \end{aligned}
    \]

	Summing up $I_1$ and $I_2$ gives the first result of the lemma.

	For the second part we first fix $\mu>1$, $\gamma=\mu/|p|$ and choose $|p|>\mu/\rho_0$ so that $\gamma\in(0,\rho_0)$. Using estimate
    \eqref{est:Hess}, we obtain:
	$$
	\Hess(p)\leq \frac{c(J)|p|}{\mu}+\frac{p\cdot D\Hess(p)}{\mu}+\e^{\mu}\,,
	$$
	which implies
	$$\frac{p\cdot D\Hess(p)}{\Hess(p)}\geq \mu - \frac{c(J)|p|}{\Hess(p)}-\frac{\mu\e^\mu}{\Hess(p)}\,.$$
	Since $\Hess(p)\to\infty$ superlinearly (see Lemma \ref{lemma:H:basic}) we then have
	$$
	\liminf_{|p|\to\infty}\frac{p\cdot D\Hess(p)}{\Hess(p)}\geq \mu\,.
	$$
	But since $\mu>1$ is arbitrary, we conclude that the limit is $+\infty$, hence the result.
\end{proof}

 \begin{remark}
 {\rm
   If $D \Hess(p)$ would grow faster than an exponential, estimate \eqref{est:Hess} would be sufficient to conclude that
	$\Hess(p)$ is negligible in front of $p\cdot D\Hess(p)$. Actually this will be the case when $J$ is not compactly supported, see Lemma~{\rm\ref{lem:gen:est}}.}
 \end{remark}

The following lemma proves that essentially, the estimates we will produce in this paper do not depend
on $A,B$ and neither on the behaviour of $J$ near the origin.
\begin{lem}
	\label{lemma:H:singular}
	For any $J$ satisfying \eqref{eq:hyp:J} and \eqref{eq:hyp2:J},
	as $|p|\to\infty$, we have
	\[
		H(p)\sim \Hess(p)\,,\ DH(p)\sim D\Hess(p)\text{ and }L(p)\sim \Less(p)\,.
	\]
	Moreover, if $J$ is symmetric, then $\Hess$ and $\Less$ are also symmetric.
\end{lem}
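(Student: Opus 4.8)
The plan is to show that, apart from the exponential integral over $\{|y|>\rho_{0}/2\}$, every other contribution to $H$, $DH$ and $L$ is of strictly smaller exponential order, and then to transfer the comparison to the Lagrangians by arguing at the maximizers rather than through the Legendre transform directly. First I would split $H(p)-\Hess(p)$ into (a) the differential part $\tr(A\,p\otimes p)+B\cdot p=O(|p|^{2})$; (b) the singular piece $S(p):=\int_{\{|y|\leq\rho_{0}/2\}}\big(\e^{p\cdot y}-1-(p\cdot y)\ind{|y|<1}\big)J(y)\dy$; and (c) the finitely many bounded or $O(|p|)$ terms left by removing $\e^{p\cdot y}$ from $\Hess$ on $\{|y|>\rho_{0}/2\}$ and collecting the leftover compensator. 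From $0\leq\e^{z}-1-z\leq\tfrac12 z^{2}\e^{|z|}$ and $\int(1\wedge|y|^{2})J(y)\dy<\infty$ one gets $0\leq S(p)\leq c(J)|p|^{2}\e^{\rho_{0}|p|/2}$, and the parallel computation for the gradient (using $|\e^{z}-1|\leq|z|\e^{|z|}$) gives $|DH(p)-D\Hess(p)|\leq c(J)|p|\e^{\rho_{0}|p|/2}+c(J)$; so both $H(p)-\Hess(p)$ and $DH(p)-D\Hess(p)$ are $O(|p|^{2}\e^{\rho_{0}|p|/2})$.

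Next I would sharpen the lower bound $\Hess(p)\geq c(J)\e^{\rho_{0}|p|/2}$ of Lemma~\ref{lemma:H:basic}, which is too weak to absorb these errors. Since $J$ is continuous and strictly positive on $B_{\rho_{0}}$ (as already used in the proof of Lemma~\ref{lemma:H:basic}), it is bounded below by some $m_{\delta}>0$ on the compact annulus $\{\rho_{0}(1-\delta)\leq|y|\leq\rho_{0}(1-\delta/2)\}$; integrating $\e^{p\cdot y}J(y)$ over the ball of radius $\rho_{0}\delta/8$ centred at $\rho_{0}(1-\tfrac34\delta)\,p/|p|$, which lies in that annulus and therefore in $\{|y|>\rho_{0}/2\}$, yields $\Hess(p)\geq c(\delta)\,\e^{\rho_{0}(1-\delta)|p|}$, and the same argument applied to $p\cdot D\Hess(p)$ — the part over $\{p\cdot y<0\}$ being only $O(|p|)$ since it stays away from the origin — gives $|D\Hess(p)|\geq p\cdot D\Hess(p)/|p|\geq c(\delta)\,\e^{\rho_{0}(1-\delta)|p|}$ for $|p|$ large. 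Choosing $\delta$ so small that $\rho_{0}(1-\delta)>\rho_{0}/2$, the errors of the first step become $o(\Hess(p))$ and $o(|D\Hess(p)|)$, which proves $H\sim\Hess$ and $DH\sim D\Hess$; the same estimates also yield $p\cdot DH(p)\sim p\cdot D\Hess(p)$ as $|p|\to\infty$.

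For $L\sim\Less$ I would not push ``$\sim$'' through the Legendre transform (which fails in general) but argue at the maximizers. Because $H$ and $\Hess$ are smooth, strictly convex (Lemmas~\ref{lem:hamiltonian:properties} and~\ref{lemma:H:basic}) and superlinear, $DH$ and $D\Hess$ are diffeomorphisms of $\R^{N}$, so $L(q)=p_{L}\cdot q-H(p_{L})$ with $p_{L}=(DH)^{-1}(q)$ and $\Less(q)=p_{\ast}\cdot q-\Hess(p_{\ast})$ with $p_{\ast}=(D\Hess)^{-1}(q)$, and $|p_{L}|,|p_{\ast}|\to\infty$ as $|q|\to\infty$. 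From $\Less(q)\geq p_{L}\cdot q-\Hess(p_{L})$ and $|H(p_{L})-\Hess(p_{L})|=o(\Hess(p_{L}))$ one gets $L(q)\leq\Less(q)+o(\Hess(p_{L}))$; by Lemma~\ref{lemma:H:less:pDHp} and the estimates above, $\Hess(p_{L})=o(p_{L}\cdot D\Hess(p_{L}))=o(p_{L}\cdot DH(p_{L}))=o(p_{L}\cdot q)$, while $\Less(q)\geq p_{L}\cdot q-\Hess(p_{L})=(1-o(1))\,p_{L}\cdot q$, so $\Hess(p_{L})=o(\Less(q))$ and $L(q)\leq(1+o(1))\Less(q)$. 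The reverse inequality follows symmetrically from $L(q)\geq p_{\ast}\cdot q-H(p_{\ast})$, using $\Hess(p_{\ast})=o(p_{\ast}\cdot D\Hess(p_{\ast}))=o(p_{\ast}\cdot q)$ and $\Less(q)=(1-o(1))\,p_{\ast}\cdot q$. For the symmetry claim, if $J(-y)=J(y)$ the substitution $y\mapsto-y$ gives $\Hess(-p)=\Hess(p)$, and then $p\mapsto-p$ in the supremum defining $\Less$ gives $\Less(-q)=\Less(q)$.

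The main obstacle is the Lagrangian step: asymptotic equivalence of convex functions is not stable under Legendre duality, and what rescues it here is exactly the superlinearity estimate $\Hess(p)/(p\cdot D\Hess(p))\to0$ of Lemma~\ref{lemma:H:less:pDHp}, which pins $L(q)$ and $\Less(q)$ to the common asymptotics $p\cdot q$ at their respective maximizers. The crucial preliminary is the sharpened lower bound of the second step: against the naive $c(J)\e^{\rho_{0}|p|/2}$ the singular part $S(p)=O(|p|^{2}\e^{\rho_{0}|p|/2})$ of $H$ would not be negligible, so one genuinely needs to know that $\Hess(p)$ grows faster than $\e^{\rho_{0}|p|/2}$, in fact like $\e^{\rho_{0}(1-\delta)|p|}$ for all small $\delta$.
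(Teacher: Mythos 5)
Your proof is correct, and it departs from the paper's in two substantive ways. For $H\sim\Hess$ the paper expands $\e^{p\cdot y}-1-(p\cdot y)$ on $\{|y|\leq\rho_0/2\}$ as $\tfrac12(p\cdot y)^2(1+O(p\cdot y))$ and records the whole discrepancy as $O(|p|^3)$; that Taylor remainder is only legitimate for $|p\cdot y|$ bounded, whereas here $p\cdot y$ ranges up to $\rho_0|p|/2$, so the honest bound on the near-origin contribution is your $O(|p|^2\e^{\rho_0|p|/2})$. Against the crude lower bound $\Hess(p)\geq c(J)\e^{\rho_0|p|/2}$ of Lemma~\ref{lemma:H:basic} this is not negligible, and your sharpened bound $\Hess(p)\geq c(\delta)\e^{\rho_0(1-\delta)|p|}$ (obtained by integrating over a small ball near $\rho_0(1-\tfrac34\delta)p/|p|$, which sits inside $\supp J$) is exactly what is needed to close the gap; this repairs a real imprecision in the printed argument. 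For $L\sim\Less$ the paper sandwiches $L(q)$ between the Legendre transforms of $\Hess\mp C|p|^3$, compares the two maximizers $p_0(q),p_1(q)$ through a mean-value estimate on $D^2H$ (using the lower bound of Lemma~\ref{lemma:H:basic}) to get $p_1\sim p_0$, and then identifies the values; you instead evaluate each transform at the other's maximizer and use Lemma~\ref{lemma:H:less:pDHp} to show that both $L(q)$ and $\Less(q)$ equal $(1+o(1))\,p\cdot q$ at their respective critical points. Your route avoids the Hessian/mean-value step entirely and makes explicit why asymptotic equivalence survives Legendre duality here (namely $\Hess(p)=o(p\cdot D\Hess(p))$), at the price of needing $DH$ and $D\Hess$ to be global diffeomorphisms, which strict convexity and superlinearity supply. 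The symmetry claim is handled the same way in both. No gaps.
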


\begin{proof}
	We split the Hamiltonian as follows:
	\[
    \begin{aligned}
    H(p)\ =&\ \tr\big(Ap\otimes p\big)+B\cdot p+\Hess(p)+\int_{\{|y|\leq \rho_0/2\}}\Big(\e^{p\cdot y}-1-(p\cdot y)\Big)J(y)\dy\\
	  -&\ \int_{\{\rho_0/2<|y|\leq 1|\}}\Big(1+(p\cdot y)\Big)J(y)\dy-\int_{\{|y|>1\}}J(y)\dy\\
	=&\ \Hess(p)+ \tr\big(Ap\otimes p\big)+B\cdot p +\int_{\{|y|\leq\rho_0/2\}}\Big(\frac{(p\cdot y)^2}{2}(1+O(p\cdot y))\Big)J(y)\dy+c(J)\\
	=&\ \Hess(p)+O(|p|^3)\,.
	\end{aligned}
    \]
	Since the Hamiltonians behave at least exponentially, Lemma~\ref{lemma:H:basic},
	the $O(|p|^3)$ is negligible by far, and $H$ and $\Hess$ are equivalent.
	The calculations for $DH$ and $D\Hess$ are similar: we get
	$$|DH(p)-D\Hess(p)|=O(|p|^2)\,,$$
	while $|DH(p)|$ grows at least exponentially.
	Notice that since $H$ and $\Hess$ are equivalent, Lemma \ref{lemma:H:less:pDHp} implies
	that both are negligible in front of $DH$ and $D\Hess$.

	More involved is the proof that $\Less(p)\sim L(p)$. Let us notice first that the point $p_0=p_0(q)$
	where $L(q)$ reaches its sup goes to infinity as $|q|\to\infty$. Indeed, this comes from the fact that
	$q=DH(p_0)$ and that $DH(p)$ grows at least exponentially, see Lemma \ref{lemma:H:basic}. The same holds for
	$D \Hess$.
	
	Then we proceed as follows:
	for $|q|$ large enough, since the sup below are attained for $|p|$ also large,
	there is some constant $C>0$ such that
	\begin{equation}\label{eq:comp:sup}
	\sup_{p\in\R^N}\{p\cdot y - \Hess(p)-C|p|^3\}\leq L(q)\leq\sup_{p\in\R^N}\{p\cdot y - \Hess(p)+C|p|^3\}\,.
	\end{equation}
	Hence if we denote by $p_1=p_1(q)$ the point  where the sup on the right is attained, we have
	$q=D \Hess(p_1)+3C|p_1|p_1$, while $q=D H(p_0)$ for the sup in $L(q)$. Now,
	$$ q=D\Hess(p_1)+3C|p_1|p_1=D\Hess(p_1)+O(|p_1|^2)=D H(p_1)+O(|p_1|^2)=DH(p_0)\,,$$
	and from this we will conclude that $p_1(q)\sim p_0(q)$. More precisely,
	$$O(|p_1|^2)=DH(p_1)-DH(p_0)=D^2H(\xi)\cdot (p_1-p_0)\,,$$
	for some $\xi$ in the segment $[p_0,p_1]$. We take the scalar product with $(p_1-p_0)$ and use Lemma~\ref{lemma:H:basic}
	to get an estimate from above:
	$$|p_1-p_0|^2\Big(\eps c(J)\e^{|\xi|\rho_0/2}\Big)\leq \pe{D^2H(\xi)\cdot (p_1-p_0)}{(p_1-p_0)}\leq C|p_1|^2|p_1-p_0|\,.$$
	From this follows that for some constant still noted $C>0$,
	$$|p_1-p_0|\leq C\frac{\e^{-|\xi|\rho_0/2}}{|p_1|^2}\,.$$
	Noticing that $|\xi|\geq\min\{|p_0|,|p_1|\}\to\infty$, this estimate implies that as $|q|\to\infty$,
	$$\frac{|p_1-p_0|}{\min\{|p_0|,|p_1|\}}\leq \frac{C}{|p_1|^2}
	\frac{\e^{-\min\{|p_0|,|p_1|\}\rho_0/2}}{\min\{|p_0|,|p_1|\}}\to0\,.$$
	Hence we have proved that $p_1(q)\sim p_0(q)$.

	The same calculation is valid for the sup on the left of \eqref{eq:comp:sup}, so that
	we conclude that
	\[
		\Less(q)=p_1\cdot D\Hess(p_1) - \Hess(p_1)\sim p_0\cdot D H(p_0)- H(p_0)=L(q)\,.
	\]
	About the symmetry see \cite[Lemma 2.4]{BrandleChasseigne08-1}.
\end{proof}

We finally prove a result which allows us to compare the Lagrangians when the kernels are ordered.
But for some technical reason, we need the kernels to be strictly ordered in $B_{\rho}\setminus B_{\rho_0/2}$,
hence we introduce the following notation:
\begin{definition}\label{def:ess.ordered}
	We say that $J_1$ and $J_2$ are essentially ordered and we denote $J_1\curlyeqprec J_2$ if there exist
	$a,b>0$ such that $\rho_0/2<a<b<\rho_0$ and
	$$J_1\leq J_2\text{ in }\R^N\,,\quad J_1(y)<J_2(y)\text{ in }\{a<|y|<b|\}\,.
	$$
\end{definition}

\begin{lem}
	\label{lemma:reduce:symmetric}
	Let us assume that $J_1,J_2$ satisfy~\eqref{eq:hyp:J} and \eqref{eq:hyp2:J} and that
	$J_1\curlyeqprec J_2$. Denote
	by $H_1,H_2,L_1,L_2$ the associated Hamiltonians and Lagrangians. Then for $p,q$ big enough:
	\[
	H_1(p)\leq H_2(p)\text{\quad and \quad} L_1(q)\geq L_2(q)\,.
	\]
\end{lem}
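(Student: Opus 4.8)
The plan is to prove the two assertions in turn, the second being an easy consequence of the first. Since $H_1$ and $H_2$ differ only through the kernel (the differential terms $\tr(A\,p\otimes p)+B\cdot p$ are common), one has
\[
H_2(p)-H_1(p)=\int_{\R^N}\Big(\e^{p\cdot y}-1-(p\cdot y)\ind{|y|<1}(y)\Big)\,\phi(y)\dy\,,\qquad \phi:=J_2-J_1\ge 0\,.
\]
The main obstacle is precisely that the weight $\e^{p\cdot y}-1-(p\cdot y)\ind{|y|<1}$ does \emph{not} have a constant sign: on $\{|y|<1\}$ it is $\ge 0$ by convexity, and on $\{|y|\ge1,\ p\cdot y\ge0\}$ it is $\ge0$ as well, but on $\{|y|\ge1,\ p\cdot y<0\}$ it takes values in $(-1,0)$. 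Hence $H_1\le H_2$ cannot be expected for every $p$, only for $|p|$ large, and the point of the proof is to show that there the exponential growth coming from the region where $J_1<J_2$ strictly dominates the bounded negative part.

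First I would bound the negative contribution: on $\{|y|\ge1,\ p\cdot y<0\}$ the weight is $\ge -1$, so that part of the integral is at least $-\int_{\{|y|\ge1\}}\phi\dy\ge -\int_{\{|y|\ge1\}}J_2(y)\dy=:-c_1>-\infty$ by \eqref{eq:hyp:J}. For the positive contribution I would use the strict ordering: choose $a',b'$ with $a<a'<b'<b$; since $\rho_0/2<a$ keeps us away from the origin, $\phi$ is continuous there, and being strictly positive on the open annulus $\{a<|y|<b\}$ it satisfies $\phi\ge m>0$ on the compact annulus $\{a'\le|y|\le b'\}$. For any direction $\hat p=p/|p|$, the truncated cone $S(\hat p):=\{a'<|y|<b'\}\cap\{\hat p\cdot y\ge \tfrac34|y|\}$ is contained in that annulus, has a measure $\kappa>0$ independent of $\hat p$ (rotational invariance), and on it $p\cdot y\ge \tfrac34 a'\,|p|=:\eta|p|$. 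For $|p|$ large enough, $\e^{t}-1-t\,\ind{|y|<1}\ge \tfrac12\e^{t}$ whenever $t=p\cdot y\ge\eta|p|$, so, dropping the remaining nonnegative part of the integral,
\[
H_2(p)-H_1(p)\ \ge\ \tfrac12\,\e^{\eta|p|}\int_{S(\hat p)}\phi(y)\dy-c_1\ \ge\ \tfrac{m\kappa}{2}\,\e^{\eta|p|}-c_1\,,
\]
which is positive once $|p|\ge R_0$ for some $R_0$. This proves $H_1(p)\le H_2(p)$ for $|p|\ge R_0$.

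For the Lagrangians, recall $L_i(q)=\sup_{p}\{p\cdot q-H_i(p)\}$. Since $H_2$ is finite, continuous, convex and superlinear (Lemma~\ref{lem:hamiltonian:properties}), this supremum is attained at some $p_2=p_2(q)$ with $q=DH_2(p_2)$; and since $DH_2$ grows at least exponentially (Lemmas~\ref{lemma:H:basic} and~\ref{lemma:H:singular}), one has $|p_2(q)|\to\infty$ as $|q|\to\infty$ — this is exactly the observation already made in the proof of Lemma~\ref{lemma:H:singular}. Therefore, for $|q|$ large enough that $|p_2(q)|\ge R_0$,
\[
L_2(q)=p_2(q)\cdot q-H_2\big(p_2(q)\big)\ \le\ p_2(q)\cdot q-H_1\big(p_2(q)\big)\ \le\ \sup_{p}\{p\cdot q-H_1(p)\}=L_1(q)\,,
\]
which is the desired inequality. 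All the real work is the sign bookkeeping in the estimate for $H_2-H_1$; the passage to the Lagrangians is a soft argument using only convexity, superlinearity and the escape of the maximiser already established earlier.
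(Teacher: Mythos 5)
Your proof is correct, and while the engine is the same as the paper's --- the strict separation $J_1\curlyeqprec J_2$ on an annulus inside the support, intersected with a cone in the direction of $p$, produces a term of order $\e^{\eta|p|}$ in $H_2-H_1$ that eventually dominates everything else --- your bookkeeping of the remainder is genuinely different and arguably cleaner. The paper first proves $\Hess_1(p)-\Hess_2(p)\leq -\eps c(\chi)\e^{a|p|}$ using the bump $\chi$, and then invokes $H_i=\Hess_i+O(|p|^3)$ (Lemma~\ref{lemma:H:singular}) to absorb a cubic error term. You instead write $H_2-H_1$ as a single integral against $\phi=J_2-J_1\geq0$ and observe that the weight $\e^{p\cdot y}-1-(p\cdot y)\ind{|y|<1}$ is nonnegative everywhere except on $\{|y|\geq1,\ p\cdot y<0\}$, where it is bounded below by $-1$; the quantity to be dominated is then merely the constant $\int_{\{|y|\geq1\}}J_2$, and the detour through the essential Hamiltonian is avoided entirely. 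Your treatment of the Lagrangians is also more complete than the paper's one-line ``follows by definition'': since $H_1\leq H_2$ is only available for $|p|\geq R_0$, the inequality $L_1(q)\geq L_2(q)$ requires knowing that the maximiser $p_2(q)$ escapes to infinity as $|q|\to\infty$, which you correctly supply (it is the observation already made in the proof of Lemma~\ref{lemma:H:singular}, resting on the at-least-exponential growth of $DH_2$ from Lemma~\ref{lemma:H:basic}).
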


\begin{proof}
	The inequality $\Hess_1(p)\leq \Hess_2(p)$ is always true, since the integrand in $\Hess$ is always positive
	and $J_1\leq J_2$.	But as such, this is not enough to derive the same inequality between $H_1$ and $H_2$.
	To this end, let us notice that since $J_1\curlyeqprec J_2$,
	then there exists $\eps>0$ and $\rho_0/2<a<b<\rho_0$ such that
	$$ J_1(y)\leq J_2(y)-\eps\chi(y)\quad\text{in}\quad\R^N\,,$$
	where $ \chi(x)=\big((b-|x|)(|x|-a)\big)_+\,.$

	Then we shall estimate the difference $\Hess_1-\Hess_2$ as in Lemma~\ref{lemma:H:basic}:
	\begin{eqnarray*}
		\Hess_1(p)-\Hess_2(p)&\leq& -\eps\int_{\{a<|y|<b\}}\e^{p\cdot y}\chi(y)\dy\\
		&\leq& -\eps\int_{\{a<|y|<b\}\cap\{p\cdot y>a|p|\}}\e^{p\cdot y}\chi(y)\dy\\		
		&\leq& -\eps c(\chi)\e^{a|p|}\,,
	\end{eqnarray*}
	where $c(\chi)$ is the integral of $\chi$ over $\{a<|y|<b\}\cap\{p\cdot y>a|p|\}$.
	It is clear that this set has non-empty interior and since $\chi>0$  in this set, then
	$c(\chi)$ is a stictly positive constant. This proves that not only are $\Hess_1$ and $\Hess_2$ ordered,
	but moreover the difference between the two is at least of the order of an exponential.

	Then we use, as in the proof of Lemma \ref{lemma:H:singular}, that $H_1(p)=\Hess_1(p)+O(|p|^3)$
	and the same for $H_2$, so that
	\begin{eqnarray}
		H_1(p)-H_2(p)=\Hess_1(p)-\Hess_2(p)+O(|p|^3)\leq -\eps c(\chi)\e^{a|p|}  + O(|p|^3)\,.
	\end{eqnarray}
	Thus for $|p|$ big enough, we have indeed $H_1(p)\leq H_2(p)$.

The inequality for $L_1$ and $L_2$ follows by definition.
\end{proof}

\begin{remark}
	($i$) This last result is also valid if $J_1$ satisfies \eqref{eq:hyp3:J} with $\beta_1$,
	$J_2$ satisfies \eqref{eq:hyp3:J} with $\beta_2\leq\beta_1$ and we consider $|p|<\beta_2$.
	($ii$) Notice that if we only assume $J_1=J_2$ outside $B_\rho$ and $J_1<J_2$ in $B_\rho$, then
	$J_1\curlyeqprec J_2$. In this case, the tails of both kernels and their support
	(if compactly supported) remain exactly the same so that essentially,
	all the estimates we give in the sequel are the same for both kernels.
\end{remark}

\subsection{Viscosity solutions of the limit Hamilton-Jacobi equation}

In Section \ref{sect:theoretical}, we will have to study the following Hamilton-Jacobi equation with
Cauchy-Dirichlet boundary values,
\begin{equation}\label{pb:limitA}
		\begin{cases}
			\partial_t I^A + H(D I^A)=0 & \text{in }B_1\times(0,\infty),\\
			I^A=0&\text{on }\partial B_1\times(0,\infty),\\
			I^A(x,0)=A & \text{in }B_1.
		\end{cases}
\end{equation}
where we assume here that $H\in\mathrm{C}(\R^N;\R)$. We refer to Section \ref{sect:critical} for the case when
the Hamiltonian could take infinite values.
Let us first recall the definition of  viscosity solutions for this equation
(see for instance \cite{CrandallLions83}):
\begin{definition}
\label{def:viscosity}
	A locally bounded u.s.c function $u:\overline{B}_1\times[0,\infty)\to\mathbb{R}$
 is a viscosity subsolution of
	\eqref{pb:limitA} if for any $\mathrm{C}^2$-smooth function $\varphi$, and any point
	$(x_0,t_0)\in\overline{B}_1\times[0,\infty)$ where $u-\varphi$ reaches a maximum, there holds,
	\begin{eqnarray*}
		x_0\in B_1 &\Rightarrow& \partial_t\varphi+H(D \varphi)\leq0\text{ at }(x_0,t_0),\\
		x_0\in\partial B_1 &\Rightarrow & \min\big\{\partial_t\varphi+H(D \varphi)\,;\,u\big\}
		\leq0\text{ at }(x_0,t_0)\,,\\
		t_0=0 &\Rightarrow & u\leq A\text{ in }B_1\,.
	\end{eqnarray*}
A locally bounded l.s.c. function is a viscosity supersolution if the same holds with
reversed inequalities and min replaced by max at the boundary. Finally a viscosity solution is a
locally bounded function $u$ such that its u.s.c. and l.s.c. envelopes are respectively sub- and
super-solutions of \eqref{pb:limitA}.
\end{definition}

Notice that in general, the initial data as to be understood in the relaxed viscosity way (either
the data is taken, or the sub/super solution condition holds) but it is well-known (see for instance
\cite{BarlesBook}) that in the case of continuous Hamiltonian, this condition is equivalent to
the one we use here. Keep in mind, however, that this will not be the case in Section \ref{sect:critical}.

Since $H$ is convex we have the following representation:
\begin{lem}\label{lem:representation}
	If $J$ satisfies \eqref{eq:hyp:J} and~\eqref{eq:hyp2:J},
	then there exists a unique viscosity solution of~\eqref{pb:limitA}, which is given by
	\begin{equation*}
		I^A(x,t)=A\wedge \min_{y\in \partial B_1}
		\Big\{t\,L\Big(\frac{x-y}{t}\Big)\Big\}
		\quad\text{in}\quad B_1\times(0,\infty)\,.
	\end{equation*}
\end{lem}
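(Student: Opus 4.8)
The plan is to establish the Lax--Oleinik representation formula for~\eqref{pb:limitA} in two steps: first verify that the proposed function is a viscosity solution, then invoke a comparison principle to get uniqueness. Since $H$ is continuous (guaranteed by Hypothesis~3, i.e.~\eqref{eq:hyp2:J}), strictly convex and superlinear by Lemma~\ref{lem:hamiltonian:properties}, the associated Lagrangian $L$ is finite, convex and superlinear, so all the classical Hopf--Lax machinery (see~\cite{LionsBook,CrandallLions83}) applies. I would define
\[
	V(x,t):=\min_{y\in\partial B_1}\Big\{t\,L\Big(\tfrac{x-y}{t}\Big)\Big\}
\]
and show that $I^A=A\wedge V$ solves the problem. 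The min over the compact set $\partial B_1$ is attained because $L$ is continuous; superlinearity of $L$ keeps the minimizing $y$ in a controlled set and makes $V$ locally Lipschitz in $B_1\times(0,\infty)$.

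First I would check the equation in the interior. Away from $\partial B_1$, the function $A\wedge V$ equals one of the two arguments on small neighborhoods (or, on the coincidence set, one can argue by the standard ``min of two subsolutions is a subsolution / max is a supersolution'' principle together with the fact that the constant $A$ trivially satisfies $\partial_t A+H(0)=H(0)$, which has the right sign after noting $H(0)=\int(\e^{0}-1)J=0$ so $A$ is a supersolution and, being constant, also handles the subsolution side where it is active). The real content is that $V$ itself solves $\partial_t V+H(DV)=0$ in $B_1\times(0,\infty)$: this is the classical Hopf--Lax verification, using the dynamic programming (semigroup) property of the formula and the Legendre duality $H=L^*$, exactly as in~\cite{LionsBook}. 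For the boundary condition on $\partial B_1\times(0,\infty)$, taking $y=x$ in the min gives $V(x,t)\le t\,L(0)$; since $L(0)=\sup_p\{-H(p)\}=-\inf_p H(p)$ and $H(0)=0$ forces $L(0)\ge0$, one needs the relaxed Dirichlet condition: I would show $V=0$ is achieved in the viscosity sense, i.e. either the equation holds or the value is $\le0$ (for the subsolution) / $\ge0$ (for the supersolution) at boundary points, which follows because a minimizing geodesic starting at $x\in\partial B_1$ can stay on the boundary, giving $V\ge0$, while $L(0)$ may be positive so the boundary data is taken only in the relaxed sense — consistent with Definition~\ref{def:viscosity}. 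For the initial condition: as $t\to0^+$ with $x\in B_1$ fixed, $\dist(x,\partial B_1)>0$ forces $|x-y|/t\to\infty$, so superlinearity of $L$ gives $t\,L((x-y)/t)\to+\infty$, hence $V(x,t)\to+\infty$ and $I^A(x,0^+)=A$, as required.

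For uniqueness I would prove a comparison principle for~\eqref{pb:limitA}: if $u$ is a bounded u.s.c.\ subsolution and $v$ a bounded l.s.c.\ supersolution, then $u\le v$. This is standard for first-order Hamilton--Jacobi equations with continuous Hamiltonian and Cauchy--Dirichlet data in the relaxed sense — doubling of variables in space and time, penalization $|x-y|^2/2\eps + |t-s|^2/2\eps$, and careful handling of the boundary where the relaxed condition lets one conclude $u\le v$ on $\partial B_1$; see~\cite{BarlesBook,CrandallLions83}. The only mild subtlety is the $+\infty$ initial data, but this is handled by comparing with the explicit solution near $t=0$ and using that any subsolution is bounded above while $I^A\to+\infty$, so the initial comparison is automatic for small $t$; alternatively one restricts to $t\ge\tau>0$ and lets $\tau\to0$.

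The main obstacle I expect is not the interior equation (which is textbook Hopf--Lax) but the bookkeeping at the boundary $\partial B_1$: because $L(0)$ need not vanish, the Dirichlet datum $0$ is genuinely attained only in the relaxed viscosity sense, and one must check that the formula $V=\min_{y\in\partial B_1}\{tL((x-y)/t)\}$ — and the truncation by $A$ — is compatible with this. Concretely, one has to verify that at a boundary maximum point of $V-\varphi$ the inequality $\min\{\partial_t\varphi+H(D\varphi)\,;\,V\}\le0$ holds: if $V(x_0,t_0)>0$ there the minimizer $y$ lies in the open-side direction and the usual interior argument applies; if $V(x_0,t_0)=0$ there is nothing to prove. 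The supersolution side at the boundary is immediate since $V\ge0$ there. Once these boundary checks and the comparison principle are in place, existence and uniqueness of the viscosity solution given by the stated formula follow, completing the proof of Lemma~\ref{lem:representation}.
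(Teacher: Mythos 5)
Your overall strategy --- Lax--Oleinik representation plus a comparison principle for the convex, superlinear, continuous Hamiltonian --- is essentially the paper's. The paper is terser: it quotes the bounded-domain Lax--Oleinik formula, whose value function is
\begin{equation*}
\min_{|y|\leq 1}\Big\{t L\Big(\tfrac{y-x}{t}\Big)+A\Big\}\ \wedge \min_{(y,s)\in \partial B_1\times(0,t)}(t-s)L\Big(\tfrac{x-y}{t-s}\Big)\,,
\end{equation*}
and reduces it to the stated expression by noting that $L\geq0$ and $L(0)=0$ collapse the first minimum to $A$, and that $r\mapsto L(cr)/r$ is nondecreasing so the second minimum is attained at $s=0$. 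You instead posit the reduced formula and verify it directly, which is a legitimate alternative; your remark that $L(0)=-\inf H$ need not vanish when $DH(0)\neq0$ (asymmetric $J$, $B\neq0$) is actually a point where you are more careful than the paper, and it is precisely why the relaxed boundary condition of Definition~\ref{def:viscosity} is the right framework.

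The one genuine error is the stability principle you invoke to handle the truncation by $A$: ``min of two subsolutions is a subsolution / max is a supersolution'' is backwards. The correct elementary facts are that the \emph{max} of two subsolutions is a subsolution and the \emph{min} of two supersolutions is a supersolution. The latter gives the supersolution property of $A\wedge V$ for free, but the subsolution property on the coincidence set $\{V=A\}$ is not covered: a test function touching $A\wedge V$ from above at such a point need not touch either $A$ or $V$ from above in a neighbourhood, so the ``locally equals one of the two arguments'' argument fails exactly there, and the min of two subsolutions is in general \emph{not} a subsolution. To close this you must use the convexity of $H$: for instance, $A\wedge V$ is locally Lipschitz and satisfies $\varphi_t+H(D\varphi)\leq0$ almost everywhere (its space-time gradient is a.e.\ either $(0,0)$, with $H(0)=0$, or that of $V$), and for convex continuous $H$ an a.e.\ Lipschitz subsolution is a viscosity subsolution; alternatively, recognize $A\wedge V$ as the value function of the combined control problem (pay $A$ to start inside at $t=0$, pay $0$ to enter from $\partial B_1$), which is what the paper's citation of the bounded-domain formula does implicitly. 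Two smaller points: the comparison principle with the relaxed Dirichlet condition is not entirely routine and genuinely needs the Barles--Perthame-type machinery you allude to rather than bare doubling of variables; and the ``$+\infty$ initial data'' worry is misplaced here, since \eqref{pb:limitA} carries the finite datum $A$ --- the infinite datum only appears after letting $A\to\infty$, outside the scope of this lemma.
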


\begin{proof}
	The assumptions made on $J$ imply that both $H$ and $L$ are finite everywhere,
	convex and super-linear so that uniqueness holds for this problem, see~\cite{LionsBook}.
	Then the Lax-Oleinik formula
	in the bounded domain $B_1\times(0,\infty)$ gives:
	\begin{equation*}
		I^A(x,t)=\min_{|y|\leq 1}\Big\{t L\Big(\frac{y-x}{t}\Big)+A\Big\}
		\wedge \min_{(y,s)\in \partial B_1\times(0,t)}
		\Big\{(t-s)L\Big(\frac{x-y}{t-s}\Big)\Big\}\,.
	\end{equation*}
	Using that $L\geq0$ and $L(0)=0$,
	we obtain that the first min equals $A$. Then using that the function $r\mapsto L(cr)/r$
	is increasing, the second minimum is attained for $s=0$ so that the result holds.
\end{proof}

\section{Theoretical Behaviour}\label{sect:theoretical}
\setcounter{equation}{0}

The main goal of this section is to derive a theoretical bound, in terms of
the Lagrangian $L$, for the error made when approximating the solution, $u$, of~\eqref{eq:0} by solutions, $u_R$,
of the Dirichlet problem. This result will allow us to derive explicit rates of convergence provided
we know the behaviour at infinity of $L(q)$ (this will be the aim of the next sections):

\begin{theorem}\label{thm:est-IR}
	If $J$ satisfies \eqref{eq:hyp:J} and \eqref{eq:hyp2:J},
	then for any fixed $x\in\R^N$ and $t>0$, as $R\to\infty$, there holds
	\begin{equation}
    \label{eq:est:sup:theo}
	|u-u_R|(x,t)\leq\e^{-R I_\infty(x/R,t/R)+o(1)R},
	\end{equation}
	where the rate function is given by
	\begin{equation}\label{est:IR}
		I_\infty(x,t)=\min_{y\in\partial B_1}t\,L\Big(\frac{x-y}{t}\Big)\,.
	\end{equation}
	Moreover, for any $T>0$
	the $o(1)$ is uniform in sets of the form
	\begin{equation*}
		\big\{|x/R|\leq 1\,,\ 0\leq t/R\leq T\big\}.
	\end{equation*}
\end{theorem}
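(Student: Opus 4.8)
The plan is to reduce the problem to the limit Hamilton-Jacobi equation \eqref{eq:Iinfty} by a rescaling argument, and then to invoke Lemma~\ref{lem:representation} for the explicit Lax-Oleinik formula. First I would fix $x_0,t_0$ and rescale: set $v_R(y,s):=(u-u_R)(Ry,Rs)$, so that $v_R$ is (essentially) a solution of a rescaled version of \eqref{eq:0} in $B_1\times(0,\infty)$, but with a nonlocal kernel scaled by $R^N J(Ry)$ and differential coefficients $a(Ry),b(Ry)$. The key observation is that $u-u_R$ vanishes on $(\R^N\setminus B_R)\times(0,\infty)$ and is bounded by $2\|u_0\|_\infty$ everywhere, so after the log-transform $w_R:=-\tfrac1R\ln(v_R/\|u_0\|_\infty)$ (or a one-sided variant, since $v_R$ need not be positive; one works with $u-u_R$ and $u_R-u$ separately, or uses $w_R := -\tfrac{1}{R}\ln\big(\tfrac{(u-u_R)_+}{2\|u_0\|_\infty}\big)$), the function $w_R$ is bounded below by $0$, equals $0$ on $\partial B_1$, and the initial layer forces $w_R(\cdot,0)\to+\infty$. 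Plugging $v_R=\|u_0\|_\infty e^{-Rw_R}$ into the rescaled equation and dividing by $Rv_R$, the second-order term produces $-\tr(a(Ry)D^2w_R)+R|\sigma^T Dw_R|^2$-type contributions, the drift gives $b(Ry)\cdot Dw_R$, and the nonlocal term, after the change of variables $z=Ry'$, becomes $\int(\e^{Dw_R\cdot z}-1-(Dw_R\cdot z)\ind{|z|<1})J(z)\,dz$ plus correction terms that vanish as $R\to\infty$. Thus $w_R$ is (a sub/super solution of) an equation converging to $\partial_t w + H(Dw)=0$ with $H$ as in \eqref{eq:H:J}.

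\textbf{Passage to the limit.} The second step is to pass to the limit $R\to\infty$ in the viscosity sense. Here I would use the half-relaxed limits $\overline w=\limsup^* w_R$ and $\underline w=\liminf_* w_R$, which are respectively a viscosity super- and sub-solution of the limit problem \eqref{eq:Iinfty} on $B_1\times(0,\infty)$, with boundary value $0$ on $\partial B_1$ and initial value $+\infty$; the latter follows from the fact that, away from $t=0$, $u-u_R$ decays at least like the contribution of a single long jump or a Gaussian excursion of size $R$, forcing $w_R$ to stay positive, together with the barrier $I^A$ from Lemma~\ref{lem:representation} (sending $A\to\infty$). Since Hypotheses~1--3 make $H$ finite, continuous, convex and superlinear, the comparison principle of \cite{LionsBook} applies to \eqref{pb:limitA}, and sandwiching between $I^A$ (for arbitrary finite $A$) and the obvious lower barrier gives $\overline w=\underline w=I_\infty$, where $I_\infty$ is the unique solution of \eqref{eq:Iinfty}. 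By Lemma~\ref{lem:representation} (letting $A\to+\infty$, using $L\geq0$, $L(0)=0$ and monotonicity of $r\mapsto L(cr)/r$), this solution is exactly $I_\infty(x,t)=\min_{y\in\partial B_1}tL((x-y)/t)$, which is \eqref{est:IR}. Translating back, $-\tfrac1R\ln\big((u-u_R)(x,t)/2\|u_0\|_\infty\big)\to I_\infty(x/R,t/R)$ — more precisely, for each fixed $(x,t)$, $|u-u_R|(x,t)\leq \e^{-RI_\infty(x/R,t/R)+o(1)R}$, which is \eqref{eq:est:sup:theo}. The same argument applied to $u_R-u$ gives the other inequality, so the bound holds for $|u-u_R|$.

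\textbf{Uniformity.} For the uniformity statement, since the half-relaxed limit convergence is locally uniform on compact subsets of $\overline B_1\times(0,\infty)$ once one has $\overline w=\underline w=I_\infty$ continuous there, and since $I_\infty$ is continuous up to $\{t=0\}$ minus the corner (where it equals $+\infty$), one gets that the $o(1)$ term in the exponent is uniform on sets $\{|x/R|\leq 1,\ 0\leq t/R\leq T\}$: near $t/R=0$ both sides blow up consistently (the estimate becomes trivially true since $I_\infty\to+\infty$ and $u-u_R$ is bounded), and on $\{t/R\geq\eta\}$ one has genuine local uniform convergence of $w_R$. I would spell this out by noting that the whole scaled family lives on the fixed domain $B_1\times(0,T)$, so ``for each $(x,t)$'' and ``uniformly on the scaled compact set'' are the same statement after unscaling.

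\textbf{Main obstacle.} The hard part is the rescaling of the nonlocal term and the rigorous verification that the relaxed-limit inequalities survive: one must check that the compensator $\ind{|y|<1}$ behaves correctly under scaling (it becomes $\ind{|z|<R}$, which converges to $1$, producing exactly the $\ind{|z|<1}$ compensator in $H$ after absorbing the difference into the drift, using Hypothesis~3 to control $\int_{1<|z|}|z|J(z)\,dz<\infty$), and that the singular-at-the-origin part is handled by the viscosity formulation with test functions (replacing $u-u_R$ by the test function near $z=0$, as in Definition~\ref{def:relaxed:boundary}), which contributes an $O(|Dw_R|^2/R)$ error à la Lemma~\ref{lemma:H:singular}. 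The other delicate point is establishing the initial condition $I_\infty(x,0)=+\infty$ rigorously, i.e. producing a lower barrier showing $(u-u_R)(x,t)$ is super-exponentially small near $t=0$; this is where one uses the finite-speed-of-propagation-type estimate for the nonlocal-plus-local equation, or alternatively simply uses that $I^A$ is a supersolution for every $A$ and $I^A\uparrow$ the claimed $I_\infty$ which is finite for $t>0$, so the lower bound $w_R\geq$ something forcing the limit is not actually needed pointwise at $t=0$ — only the comparison with $I^A$ on $t>0$ matters, and $\sup_A I^A = \min_{y\in\partial B_1} tL((x-y)/t)$ gives the result without ever evaluating at $t=0$.
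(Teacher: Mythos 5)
Your overall strategy --- rescale by $R$ in space and time, take the logarithmic transform divided by $R$, pass to half-relaxed limits, identify the limit with the solution of \eqref{pb:limitA} by comparison, and send $A\to\infty$ via Lemma \ref{lem:representation} --- is exactly the paper's. However, two steps as you have written them do not close. The first concerns the boundary condition. You assert that $u-u_R$ vanishes on $(\R^N\setminus B_R)\times(0,\infty)$ and, simultaneously, that the log-transform equals $0$ on $\partial B_1$; these are mutually inconsistent (vanishing of $u-u_R$ would force the log-transform to be $+\infty$ there), and the first assertion is false in any case: it is $u_R$ that vanishes outside $B_R$, so $u-u_R=u$ there, which is positive but may itself be exponentially small (e.g.\ for compactly supported $u_0$), so the rescaled log-transform of $u-u_R$ need not tend to $0$ on $\partial B_1$. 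The paper sidesteps this by first majorizing: let $v_R$ solve \eqref{eq:0} in $B_R$ with initial value $0$ in $B_R$ and exterior data exactly $\|u_0\|_\infty$; by comparison $0\leq u-u_R\leq v_R$ (in particular $u\geq u_R$, so no positive parts or second one-sided argument are needed), and it is $I_R=-\frac1R\ln v_R(R\cdot,R\cdot)$ that carries the clean Dirichlet datum $-\frac1R\ln\|u_0\|_\infty\to 0$ on $\partial B_1$ together with the initial datum $+\infty$ inside $B_1$. Without this comparison function, the identification of the boundary condition in \eqref{eq:Iinfty} --- one of the three pieces of data that pin down $I_\infty$ --- is not justified.

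The second point is the passage to the limit. You have the half-relaxed limits reversed: $\limsup^*$ of a family of subsolutions is a \emph{sub}solution and $\liminf_*$ of supersolutions is a \emph{super}solution; with your assignment the comparison principle only yields the trivial inequality $\liminf_*\leq\limsup^*$ and nothing can be concluded. Moreover, the untruncated $\limsup^*$ of your $w_R$ may well be $+\infty$ near $t=0$, so one must truncate the function itself before taking relaxed limits, setting $I_R^A=-\frac1R\ln\big(w_R+\e^{-RA}\big)$, which is bounded above by $A$ and satisfies the \emph{same} rescaled equation because \eqref{eq:0} is invariant under addition of constants; comparison for \eqref{pb:limitA} is applied at each fixed $A$, and only afterwards does one let $A\to\infty$ through the explicit formula of Lemma \ref{lem:representation}. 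Your ``sandwiching between $I^A$ and the obvious lower barrier'' gestures at this but does not supply the mechanism. The rescaling of the nonlocal term, the treatment of the compensator, and the uniformity discussion are otherwise in line with the paper.
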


We shall dedicate the rest of this section to the proof of this theorem. Notice that if one only assumes
\eqref{eq:hyp3:J} instead of \eqref{eq:hyp2:J}, the theorem still remains valid, but the proof requires
some finer arguments since the Hamiltonian can be infinite in some regions of $\R^N$. In such a case,
an initial layer appears that we analyse in detail in Section \ref{sect:critical}.

\subsection{Formal convergence}\label{subsect:formal}

Let us denote by $v_R$ the solution of \eqref{eq:0} in $B_R\times(0,\infty)$ with
initial value $v_R(x,0)=0$ for $x\in B_R$ and ``boundary data'' $v_R=\|u_0\|_\infty$ for $|x|\geq R$.
By comparison, $0\leq u-u_R\leq v_R$ so that we need only to estimate $v_R$.

We first rescale the equation both in $x$ and $t$ as follows:
\begin{equation*}
	w_R(x,t)=v_R(Rx, Rt)\quad\text{for}\quad x\in B_1,\ t\geq0\,.
\end{equation*}
Then $w_R$ satisfies a rescaled equation in the fixed ball $B_1$ in the sense of viscosity:
\begin{eqnarray*}
	\partial_t w_R(x,t)&=&\frac{1}{R}\tr\Big( a(Rx)D^2w_R(x,t)\Big) + b(Rx)\cdot D w_R(x,t) \\
&+& R\int_{\R^N}\Big\{w_R(x+y/R,t) - w_R(x,t)-(Dw_R(x)\cdot y/R) \ind{|y|<1}(y)\Big\}J(y)\dy.
\end{eqnarray*}

In order to  estimate $w_R$ we follow~\cite{BarlesDaherRomano94} and perform the ``usual'' logarithmic
transform, but we have to rescale accordingly, dividing by $R$ (and not $R^2$ as it is the case
for the heat equation).	So, remembering that for $t>0$, $w_R>0$, let us define
\begin{equation*}
	I_R(x,t)=-\frac{1}{R}\ln (w_R(x,t))\,.
\end{equation*}
Then
\[
\begin{aligned}
    &\partial_t w_R(x,t)=-R\e^{-RI_R(x,t)}\partial_tI_R(x,t)\,,\qquad Dw_R(x,t)=-R\e^{-RI_R(x,t)}DI_R(x,t)\,,\\[2mm]
	&D^2w_R(x,t)=-R\e^{-RI_R(x,t)}D^2I_R(x,t)+R^2\e^{-RI_R(x,t)}(DI_R\otimes DI_R)(x,t)\,,
\end{aligned}
\]
and
\[
\begin{aligned}
    \int \Big\{& w_R(x+y/R,t)-w_R(x,t)-(Dw_R(x)\cdot y/R) \ind{|y|<1}(y)\Big\}J(y)\dy\\
	&=\int \Big\{ \e^{-RI_R(x+y/R,t)}-\e^{-RI_R(x,t)}+(DI_R(x)\cdot y)\e^{-RI_R(x,t)} \ind{|y|<1}(y)\Big\}J(y)\dy\\
               &=\e^{-RI_R(x,t)}\int_{\R^N}\Big\{\e^{R\{-I_R(x+y/R,t)+I_R(x,t)\}}-1+(DI_R(x)\cdot y)
	\ind{|y|<1}(y)\Big\}J(y)\dy\,.
\end{aligned}
\]
As for the differential terms, we have
\[
\begin{aligned}
-&\e^{RI_R(x)}\Big(\frac{1}{R^2}\tr\Big( a(Rx)D^2w_R(x,t)\Big) + \frac{1}{R}b(Rx)\cdot D w_R(x,t)\Big)\\
&=\frac{1}{R}\tr\Big( a(Rx)D^2I_R(x,t)\Big) -\tr\Big( a(Rx)DI_R(x,t)\otimes DI_R(x,t)\Big)
	- b(Rx)\cdot D I_R(x,t)\,.
\end{aligned}
\]
We thus arrive at the following equation for $I_R$ (to be interpreted in the sense of viscosity, with test functions):
\begin{equation}\label{eq:IR}
    \begin{aligned}
	&\partial_t I_R(x,t)+\int_{\R^N}\Big\{\e^{-R\big\{I_R(x+y/R,t)-I_R(x,t)\big\}}-1+(DI_R(x)\cdot y)
	\ind{|y|<1}(y)\Big\}J(y)\dy\\
	&=\frac{1}{R}\tr\Big( a(Rx)D^2I_R(x,t)\Big) -\tr\Big( a(Rx)DI_R(x,t)\otimes DI_R(x,t)\Big)
	- b(Rx)\cdot D I_R(x,t)
    \end{aligned}
\end{equation}
Using the limits defined in \eqref{eq:limit:differentials},
the equation formally converges to the Hamilton-Jacobi equation $\partial_t I +H(DI)=0$ with
\begin{equation}\label{eq:H}
	H(p)=\tr\Big( A\, p\otimes p\Big)+B\cdot p+\int \Big(\e^{\,p\cdot y}-1-(p\cdot y)\ind{|y|<1}\Big)J(y)\dy\,.
\end{equation}
In the following subsection we then justify the convergence of $I_R$ towards the solution
in the sense of viscosity solutions.

\subsection{Passing to the limit in the viscous sense}\label{subsect:limit}

A first problem comes from the fact that if $w_{R}$ approaches zero, then $I_{R}$ may not
remain bounded. Hence to avoid upper estimates for $I_R$, we use the same trick as in \cite{BarlesDaherRomano94}
which consists in modifying $I_R$ a little bit. For any $A>0$, let
\begin{equation*}
	I_R^A(x,t)=-\frac{1}{R}\ln (w_R(x,t) + \e^{-RA}),
\end{equation*}
which is bounded from above by $A$. Let us notice that since equation~\eqref{eq:0} is invariant under addition of
constants, $I_R^A$ satisfies the same equation as $I_R$, that is, equation \eqref{eq:IR}.

\begin{proposition}
	The sequence $(I_R^A)$ converges locally uniformly in $\overline B_1\times[0,\infty)$ as $R\to+\infty$ towards the unique
	viscosity solution $I^A$ of \eqref{pb:limitA}.
\end{proposition}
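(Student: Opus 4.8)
The strategy is the classical half-relaxed limits method for viscosity solutions (Barles–Perthame), combined with the comparison principle for \eqref{pb:limitA} already available through Lemma~\ref{lem:representation}. The key preliminary point is a uniform bound: by comparison, $0\leq w_R\leq\|u_0\|_\infty$ in $B_1\times(0,\infty)$, and since we added $\e^{-RA}$ inside the logarithm, $I_R^A$ satisfies the two-sided bound $-\tfrac1R\ln(\|u_0\|_\infty+1)\leq I_R^A\leq A$ (for $R$ large), so the sequence $(I_R^A)$ is uniformly bounded. We may therefore define the half-relaxed limits
\[
	\overline I(x,t)=\limsup_{\substack{R\to\infty\\ (y,s)\to(x,t)}} I_R^A(y,s)\,,\qquad
	\underline I(x,t)=\liminf_{\substack{R\to\infty\\ (y,s)\to(x,t)}} I_R^A(y,s)\,,
\]
which are respectively u.s.c. and l.s.c., finite, and satisfy $\underline I\leq\overline I$ on $\overline B_1\times[0,\infty)$.

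The heart of the proof is to show that $\overline I$ is a viscosity subsolution and $\underline I$ a viscosity supersolution of \eqref{pb:limitA}, in the sense of Definition~\ref{def:viscosity}. For the interior equation one takes a smooth $\varphi$ and a strict maximum point $(x_0,t_0)\in B_1\times(0,\infty)$ of $\overline I-\varphi$; by a standard argument there are maximum points $(x_R,t_R)\to(x_0,t_0)$ of $I_R^A-\varphi$ with $I_R^A(x_R,t_R)\to\overline I(x_0,t_0)$, and plugging $\varphi$ into the viscosity inequality \eqref{eq:IR} for $I_R^A$ gives
\[
	\partial_t\varphi(x_R,t_R)+\int_{\R^N}\!\Big\{\e^{-R\{I_R^A(x_R+y/R,t_R)-I_R^A(x_R,t_R)\}}-1+(D\varphi(x_R)\cdot y)\ind{|y|<1}\Big\}J(y)\dy\leq \text{(differential terms)}\,.
\]
One then passes to the limit: the term $\tfrac1R\tr(a(Rx_R)D^2\varphi)\to0$; the terms $-\tr(a(Rx_R)D\varphi\otimes D\varphi)$ and $-b(Rx_R)\cdot D\varphi$ converge to $-\tr(A\,D\varphi\otimes D\varphi)-B\cdot D\varphi$ by Hypothesis~2; and in the nonlocal integral one uses that, near $y=0$, $\varphi$ being smooth gives $-R\{\varphi(x_R+y/R)-\varphi(x_R)\}\to -D\varphi(x_0)\cdot y$ together with a second-order Taylor bound uniform in $R$ to justify dominated convergence on $\{|y|<\delta\}$, while on $\{|y|\geq\delta\}$ one uses $I_R^A(x_R+y/R,t_R)-I_R^A(x_R,t_R)\geq \varphi(x_R+y/R)-\varphi(x_R)$ (from the maximum-point property, valid for $x_R+y/R$ possibly outside $B_1$ once one extends $I_R^A$ by its boundary data) to get an upper bound on the exponential, then Fatou. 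This yields $\partial_t\varphi+H(D\varphi)\leq0$ at $(x_0,t_0)$ with $H$ as in \eqref{eq:H}. The boundary condition on $\partial B_1\times(0,\infty)$ follows from the relaxed boundary formulation of $v_R$ (Definition~\ref{def:relaxed:boundary}): at a boundary maximum point either the equation passes to the limit as above, or $w_R\to\|u_0\|_\infty$ forces $I_R^A\to0$, giving $\overline I\leq0$ there — combined with $\overline I\geq0$ (since $w_R\leq\|u_0\|_\infty$ implies $I_R^A\geq-\tfrac1R\ln(\dots)$, but one actually needs $\overline I\ge 0$ which comes from $w_R\le\|u_0\|_\infty$ only giving a lower bound tending to $0$; the true sign is pinned down using that $\underline I\ge 0$ as a supersolution argument) — this matches $I^A=0$ on the lateral boundary. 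The initial condition $t_0=0\Rightarrow\overline I\leq A$ is immediate from $I_R^A\leq A$; and for the supersolution, $\underline I(x,0)\geq A$ follows from $w_R(x,0)=0$, so $I_R^A(x,0)=A$ exactly.

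Once $\overline I$ is a subsolution and $\underline I$ a supersolution of \eqref{pb:limitA}, the comparison principle — valid because $H$ is continuous, convex and superlinear (Lemma~\ref{lem:hamiltonian:properties}), see \cite{LionsBook} — gives $\overline I\leq\underline I$ on $\overline B_1\times[0,\infty)$. Together with the trivial $\underline I\leq\overline I$ this forces $\overline I=\underline I=:I^A$, which is then the unique viscosity solution of \eqref{pb:limitA} identified explicitly in Lemma~\ref{lem:representation}; and equality of the half-relaxed limits is exactly local uniform convergence of $(I_R^A)$ to $I^A$ on $\overline B_1\times[0,\infty)$. The main obstacle, and the place requiring genuine care rather than routine bookkeeping, is the limit in the nonlocal integral term: one must simultaneously handle the singularity of $J$ at the origin (controlled by replacing $I_R^A$ with the test function $\varphi$ on $\{|y|<\delta\}$ and a uniform $|y|^2$ Taylor estimate, exactly as in the definition of viscosity solution for \eqref{eq:0}) and the tail of $J$ at infinity (controlled via Hypothesis~3, \eqref{eq:hyp2:J}, which guarantees $\int_{|y|>\rho_0/2}\e^{\beta|y|}J(y)\dy<\infty$ for all $\beta$, so the exponential integrand is dominated uniformly in $R$ after using the one-sided bound coming from the maximum-point property), and must do both while $x_R+y/R$ may leave $B_1$, which is why the extension of $I_R^A$ by its boundary data $\|u_0\|_\infty$ — equivalently $I_R^A$ bounded, hence the exponential bounded there too — is used throughout.
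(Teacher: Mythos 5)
Your overall strategy --- half-relaxed limits, stability of the viscosity inequalities for the rescaled equation \eqref{eq:IR}, then comparison for \eqref{pb:limitA} --- is exactly the paper's, and most of the bookkeeping (the uniform bounds on $I_R^A$, the differential terms via Hypothesis~2, the $\{|y|<\delta\}$ part via the $|y|^2$-Taylor bound) matches. Two points need fixing.

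First, the key inequality in the nonlocal term is written backwards. At a maximum point of $I_R^A-\varphi$ one has $I_R^A(x_R+y/R,t_R)-I_R^A(x_R,t_R)\leq\varphi(x_R+y/R,t_R)-\varphi(x_R,t_R)$, not $\geq$; this gives a \emph{lower} bound on $\e^{-R\{I_R^A(x_R+y/R,t_R)-I_R^A(x_R,t_R)\}}$, which is what is needed, since that integral enters the subsolution inequality with a minus sign on the right-hand side. As you state it (an upper bound on the exponential), the estimate goes the wrong way and the limit inequality cannot be closed. With the direction corrected, your Fatou argument on the nonnegative integrand $\e^{-R\{\varphi(x_R+y/R,t_R)-\varphi(x_R,t_R)\}}J(y)$ is legitimate, and in fact slightly cleaner than the paper's route, which truncates at $|y|=M$, uses the boundedness of $w_R$ by $\|u_0\|_\infty$ on $\{|y|\geq M\}$, and a Taylor expansion with $D^2\varphi$ bounded on $B_M$ in the middle range.

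Second, the initial condition for the supersolution half is not ``immediate from $I_R^A(x,0)=A$ exactly''. The half-relaxed liminf $\underline{I}^A(x,0)$ is taken over $(x',t')\to(x,0)$ with $t'>0$ as well, so the exact value on $\{t=0\}$ says nothing a priori about $\underline{I}^A(x,0)$. Passing to the limit only yields the relaxed condition $\max\big\{\partial_t\varphi+H(D\varphi)\,;\,\underline{I}^A-A\big\}\geq0$ at $t_0=0$, and one must then invoke the equivalence between the relaxed and the pointwise initial conditions for a finite continuous Hamiltonian --- this is precisely what the paper does, citing \cite[Thm 4.7]{BarlesBook} and the barrier argument of Proposition~\ref{prop:HJ.initial.trace}. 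Your boundary discussion has a related confusion: there is no need to identify $\overline{I}^A=0$ on $\partial B_1$; passing the relaxed Dirichlet condition of Definition~\ref{def:relaxed:boundary} to the limit suffices, and the comparison principle between the relaxed sub- and supersolutions does the rest.
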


\begin{proof}
 	We introduce the half-relaxed limits, for $x\in \overline{B_1},\,t\geq0$:
 	\begin{equation*}
 		\overline{I}^A(x,t) := \limsup_{R\to\infty}{}^*I^A_R(x,t)=
		\limsup_{\genfrac{}{}{0pt}{}{(x',t')\to(x,t)}{R\to\infty}}I^A_R(x',t')
 	\end{equation*}
 	and
     \begin{equation*}
         \underline{I}^A(x,t) := \liminf_{R\to\infty}{}^*I^A_R(x,t)=
		 \liminf_{\genfrac{}{}{0pt}{}{(x',t')\to(x,t)}{R\to\infty}}I^A_R(x',t'),
     \end{equation*}
 	and we shall prove that they are respectively viscosity sub- and super-solutions of the
 	limit problem \eqref{pb:limitA}. Then a uniqueness result will allow us to conclude.

	Let us take $\delta\in(0,1)$ and a test function $\phi$ such that $\overline{I}^A-\phi$ has a
	maximum at $(x_0,t_0)$.	Up to a standard modification of $\phi$, we can assume the maximum is strict so that
	there exist sequences $R_n\to+\infty$ and $(x_n,t_n)\to(x_0,t_0)$ such that
	\begin{equation*}
	I_{R_n}^A-\phi\text{ has a strict maximum at }(x_n,t_n)\,.
	\end{equation*}

	\textsc{Case 1: } the point $(x_0,t_0)$ is inside $B_1\times(0,\infty)$.
	Then for $n$ big enough, all the points $(x_n,t_n)$ are also inside $B_1\times(0,\infty)$
	so we may use the equation for $I_{R_n}^A$ at those points and pass to the limit.

	We first write down the viscosity inequality for $I_R^A$:
\[\begin{aligned}
		\frac{\partial \phi}{\partial t}&(x_n,t_n)\leq
	\frac{1}{R_n}\tr\Big( a(R_nx_n)D^2\phi(x_n,t_n)\Big)\\
& -\tr\Big( a(R_nx_n)D\phi(x_n,t_n)\otimes D\phi(x_n,t_n)\Big)
	- b(R_nx_n)\cdot D\phi(x_n,t_n) \\
		&-\int_{\{|y|<\delta\}}   \Big\{\e^{-R\{\phi(x_n+y/R,t_n)-\phi(x_n,t_n)\}}-1-
		(D \phi(x_n,t_n)\cdot y)\Big\}J(y)\dy\\
		&-\int_{\{|y|\geq\delta\}} \Big\{\e^{-R\{I_{R_n}^A(x_n+y/R,t_n)-I_{R_n}^A(x_n,t_n)\}}-1-
			(D \phi(x_n,t_n)\cdot y)\ind{|y|<1}\Big\}J(y)\dy\\[2mm]
	&={\rm Diff}(n) + {\rm Int}_1(n,\delta) + {\rm Int}_2(n,\delta)
	\end{aligned}\]
	where ${\rm Diff}(n)$ represents the differential terms, ${\rm Int}_1(n,\delta)$ is the integral over
	$\{|y|<\delta\}$ and ${\rm Int}_2(n,\delta)$ is the integral over $\{|y|\geq\delta\}$.

	Let us first remark that passing to the limit in the differential terms is easy,
	using \eqref{eq:limit:differentials}:
	\[
	{\rm Diff}(n)\to -\tr\Big( A\cdot D\phi(x_0,t_0)\otimes D\phi(x_0,t_0)\Big)- B\cdot D\phi(x_0,t_0)\,.
	\]

	For $n$ big enough, the first integral term can be controled by:
	\[
		|{\rm Int}_1(\delta,n)|\leq \|D^2\phi\|_{L^\infty(B_1(x_0,t_0))}
	\int_{\{|y|<\delta\}}|y|^2 J(y)\dy\to0\text{ as }\delta\to0\,.
	\]

	It remains to pass to the limit in ${\rm Int}_2$. To this end we use the fact that, since we have
	a maximum point, for any $z\in\R^N$,
	\begin{equation*}
		I_{R_n}^A(x_n+z,t_n)-I_{R_n}^A(x_n,t_n)\leq \phi(x_n+z,t_n)-\phi(x_n,t_n)\,.
	\end{equation*}
	Then, we fix $\eps>0$, choose some $M>1$ and split ${\rm Int}_2$ into two terms as follows:
	$$
    \begin{aligned}
		{\rm Int}_2(\delta,n)\leq &-\!\! \int_{\delta\leq|y|<M}\!\!\Big\{\!  \e^{-R\{\phi(x_n+y/R_n,t_n)
		-\phi(x_n,t_n)\}}-1-(D \phi(x_n,t_n)\cdot y)\ind{|y|<1}\Big\}	J(y)\dy \\
		 &+ \Big|\int_{|y|\geq M}\Big\{w_{R_n}(x_n+y/R_n,t_n)-w_{R_n}(x_n,t_n)
		\Big\}J(y)\dy\,\Big|\,.
	\end{aligned}
    $$
	Since $w_R$ is bounded by $\|u_0\|_\infty$, we can choose $M$ big enough so that the second term is less than $\eps$,
	independently of $n$.
	
	Then we write a Taylor expansion for $\phi$ near point $x_n$:
	there exists a $\xi_n\in B_M$ such that
	\[
		{\rm Int}_2\leq -\int_{\delta\leq|y|<M}\!\!\Big\{\e^{-D \phi(x_n,t_n)\cdot y+\frac{1}{R_n}
		(D^2\phi(\xi_n)y\cdot y)}-1-(D \phi(x_n,t_n)\cdot y)\ind{|y|<1}\Big\}J(y)\dy +\eps\,.
	\]
	Since $\xi_n$ remains in $B_M$ and  $\phi$ is smooth we have that $D^2\phi(\xi_n)$ remains bounded. Hence, we can pass to the limit as $n\to+\infty$:
	\[
		\limsup_{n\to\infty}{\rm Int}_2(\delta,n)\leq -\int_{\delta\leq|y|<M}
	\Big\{\e^{-D \phi(x_0,t_0)\cdot y}-1-(D \phi(x_0,t_0)\cdot y)\ind{|y|<1}\Big\}J(y)\dy +\eps\,.
	\]
	
	Summing up the various terms, we obtain that for any $\delta>0$ and any $\eps>0$, there exists $M=M(\eps)>1$ such that
\[
    \begin{aligned}
		\frac{\partial \phi}{\partial t}(x_0,t_0)\leq&
	-\tr\Big( A\cdot D\phi(x_0,t_0)\otimes D\phi(x_0,t_0)\Big)
	- B\cdot D\phi(x_0,t_0) \\
		&-\int_{\delta\leq|y|<M}
	\Big\{\e^{-D \phi(x_0,t_0)\cdot y}-1-(D \phi(x_0,t_0)\cdot y)\ind{|y|<1}\Big\}J(y)\dy +\eps+o_\delta(1)
	\end{aligned}
\]
	where $o_\delta(1)$ represent a quantity that goes to zero as $\delta\to0$.

	It only remains to pass to the limit as $\eps,\delta\to0$. Since $J$ satisfies \eqref{eq:hyp:J} and
	\eqref{eq:hyp2:J}, the integral over $\R^N$ converges, and we can
	send $M$ to $+\infty$ and obtain in the limit:
\[
    \begin{aligned}
		\frac{\partial \phi}{\partial t}(x_0,t_0)\leq&
	-\tr\Big( A\cdot D\phi(x_0,t_0)\otimes D\phi(x_0,t_0)\Big)
	- B\cdot D\phi(x_0,t_0) \\
		&-\int_{\R^N}
	\Big\{\e^{-D \phi(x_0,t_0)\cdot y}-1-(D \phi(x_0,t_0)\cdot y)\ind{|y|<1}\Big\}J(y)\dy\,,
	\end{aligned}
\]
	which shows that $\overline{I}^A$ at $(x_0,t_0)$ is a subsolution in the sense of viscosity.

\textsc{Case 2: } the point $(x_0,t_0)$ is located at the boundary, $x_0\in\partial B_1,t_0>0$.
	Then the sequence $(x_n,t_n)$ may have points $x_n$ either inside $B_1\times(0,\infty)$, or at the boundary,
	or even outside $B_1$.
	If $x_n\in B_1$, we use the equation as in the previous case while if
	$x_n\in\partial B_1$, we use the relaxed boundary condition in Definition \ref{def:relaxed:boundary}.
	Finally, if $|x_n|>1$, then $I_{R_n}(x_n,t_n)=0$
	so that in any case, one has
	\begin{equation*}
		\min\big\{\partial_t\phi+H(D\phi)\,;\,I_{R_n}^A\big\}
		\leq0\text{ at }(x_n,t_n)\,.
	\end{equation*}
 	We then pass to the limit as $n\to+\infty$ and get the relaxed condition for $\overline{I}^A$
 	at the boundary.

	\textsc{Case 3: } the point $(x_0,t_0)$ is located at $t_0=0$, $x_0\in B_1$.
	The same as in case 2 happens: we have either $t_n=0$ and then we use the initial condition,
	or $t_n>0$ in which case we use the equation. In any case we get
	\begin{equation*}
		\min\big\{\partial_t\phi+H(D\phi)\,;\,I_{R_n}^A-A\big\}
		\leq0\text{ at }(x_n,t_n)\,,
	\end{equation*}
	which gives in the limit
	\begin{equation*}
		\min\big\{\partial_t\phi+H(D\phi)\,;\,\overline{I}^A-A\big\}
		\leq0\text{ at }(x_0,t_0)\,.
	\end{equation*}
	Now, it is well-known (see for instance \cite[Thm 4.7]{BarlesBook})
	that in this case, the initial condition is equivalent
	to $\lim_{t\to0}\overline{I}^A\leq A$. Actually this can be proved as in Proposition \ref{prop:HJ.initial.trace},
	using that in the present situation, the Hamiltonian is finite everywhere.

	\textsc{Conclusion: }
	First, the supersolution conditions for $\underline{I}^A$ are obtained by the same
 	method, with reversed inequalities. Then using comparison between u.s.c./l.s.c. sub/super solutions for
	\eqref{pb:limitA}, we get the inequality $\underline{I}^A\leq\overline{I}^A$,
	which implies equality of both functions. Hence, all the sequence
	converges uniformly in $\overline B_1\times[0,T]$ for all $T>0$ to the unique solution $I^A$.~
\end{proof}

\subsection{Proof of Theorem \ref{thm:est-IR}}
	This result only comes from the fact that for any $A>0$, by construction
	\begin{equation*}
		\overline{I}^A=\inf(\overline{I},A),\quad \underline{I}^A=\inf(\underline{I},A),
	\end{equation*}
with
    \begin{equation*}
    \overline{I}(x,t) := \limsup_{R\to\infty}{}^*I_R(x,t),\qquad\underline{I}(x,t) := \liminf_{R\to\infty}{}^*I_R(x,t).
    \end{equation*}
The fact that $\overline{I}^A=\underline{I}^A$, together with Lemma \ref{lem:representation}
yields the result for fixed $(x,t)$, passing to the limit as $A\to\infty$.

Now, the convergence of $I_R$ to $I_\infty$ is locally uniform in $\overline B_1\times[0,\infty)$ so that for any $T>0$
as long as $x/R\leq1$ and $0\leq t/R\leq T$,
$$|I_R(x/R,t/R)-I_\infty(x/R,t/R)|=o(1)\to 0\quad\text{as}\quad R\to\infty\,,$$
where $o(1)$ is uniform with respect to $x$ and $t$ as above.
Thus estimate \eqref{eq:est:sup:theo} indeed holds, which ends the proof. Notice that
at $t=0$, both $I_R$ and $I_\infty$ are infinite (which corresponds to $A=+\infty$), but
anyway, the difference $I_R-I_\infty$ remains uniformly controlled.

\begin{remark}
  {\rm At this stage we have an estimate valid up to the boundary of $B_R$. In the next sections we shall  derive more explicit estimates only for $|x|\leq \theta R$, with $\theta\in(0,1)$, because we use the asymptotic behaviour of $L(q)$ as $|q|\to\infty$.}
\end{remark}

\section{Compactly supported kernels}\label{sect:compact}
\setcounter{equation}{0}

In this section we prove that a general  ``$R\ln R$'' bound is valid for compactly supported kernels, extending the
symmetric and regular case proved in \cite{BrandleChasseigne08-1}. In order to take into account the possible
asymmetry of the kernel, we define below for any unit vector $\mathbf{\nu}$, the size of the support of $J$
in the direction $\mathbf{\nu}$:
\begin{definition}
	For any $\mathbf{\nu}\in\R^N$ with $|\mathbf{\nu}|=1$, let
	\begin{equation}\label{eq:compact.support.rho}
	\rho(\mathbf{\nu}):=\sup\big\{ r>0: J(r\mathbf{\nu})>0 \big\}\,.
	\end{equation}
\end{definition}
Notice that since $J$ is continuous, for any $r>0$ close enough to $\rho(\mathbf{\nu})$, $J$ is positive in a neighborhood
of $r\mathbf{\nu}$. Notice also that if $J$ is symmetric, then $\rho(\mathbf{\nu})=\rho$, the radius of the support of $J$.
We shall first derive a bound from below for non-symmetric kernels in the logarithmic scale:
\begin{lem}
  	\label{lem:compactsupport}
	Let $J$ be a continuous compactly supported kernel, let $\mathbf{\nu}=p/|p|$ and define $\rho(\mathbf{\nu})$
	as above. Then we have:
  	\begin{equation}
        \label{eq:behaviour:DpHp:compact}
        \liminf_{|p|\to\infty}\frac{\ln(p\cdot D\Hess(p))}{\rho(\mathbf{\nu}) |p|}\geq1\,.
    \end{equation}
\end{lem}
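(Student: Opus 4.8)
The plan is to bound $p\cdot D\Hess(p)$ from below by restricting the defining integral to a small solid cone around the direction $\mathbf{\nu}=p/|p|$ where $J$ is positive, centered at radius close to $\rho(\mathbf{\nu})$. By the very definition of $\rho(\mathbf{\nu})$ in \eqref{eq:compact.support.rho} and continuity of $J$, for every $\eta>0$ there is a radius $r\in(\rho(\mathbf{\nu})-\eta,\rho(\mathbf{\nu}))$ with $J(r\mathbf{\nu})>0$, hence $J>0$ on a full neighborhood $\mathcal{N}$ of $r\mathbf{\nu}$. First I would show that for $|p|$ large, every $y\in\mathcal{N}$ satisfies $p\cdot y\geq (\rho(\mathbf{\nu})-2\eta)|p|$: indeed $p\cdot(r\mathbf{\nu})=r|p|$, and for $y$ in a small enough neighborhood the perturbation of the scalar product is $O(1)$ relative to $|p|$, so it is absorbed by shrinking $\rho(\mathbf{\nu})-\eta$ to $\rho(\mathbf{\nu})-2\eta$. (Note $\mathcal{N}$ is fixed once $\eta$ is fixed; the direction $\mathbf{\nu}$ does vary, but by a compactness argument over the unit sphere, or simply by fixing $\mathbf{\nu}$ and tracking dependence, one gets uniformity.)

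Next I would use the same splitting as in Lemma \ref{lemma:H:basic}: write
\[
p\cdot D\Hess(p)=\int_{\{p\cdot y>0\}}(p\cdot y)\e^{p\cdot y}J(y)\dy+\int_{\{p\cdot y\leq 0\}}(p\cdot y)\e^{p\cdot y}J(y)\dy,
\]
where the second integral is bounded below by $-c(J)|p|$ (using $|p\cdot y|\e^{p\cdot y}\leq |p||y|$ on $\{p\cdot y\le 0\}$ and $J$ compactly supported, so $\int|y|J(y)\dy<\infty$). For the first integral I would discard everything except the contribution of $\mathcal{N}$, on which $p\cdot y\geq(\rho(\mathbf{\nu})-2\eta)|p|$, $J\geq c(\mathcal{N})>0$, and the integrand $(p\cdot y)\e^{p\cdot y}$ is increasing in $p\cdot y$; this yields
\[
p\cdot D\Hess(p)\ \geq\ c(\mathcal{N})\,(\rho(\mathbf{\nu})-2\eta)|p|\,\e^{(\rho(\mathbf{\nu})-2\eta)|p|}\ -\ c(J)|p|.
\]
Taking logarithms, dividing by $\rho(\mathbf{\nu})|p|$ and letting $|p|\to\infty$ gives a liminf at least $(\rho(\mathbf{\nu})-2\eta)/\rho(\mathbf{\nu})$; since $\eta>0$ is arbitrary, the claimed bound $\liminf \ln(p\cdot D\Hess(p))/(\rho(\mathbf{\nu})|p|)\geq 1$ follows.

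The main obstacle I anticipate is controlling the dependence on the direction $\mathbf{\nu}$: the neighborhood $\mathcal{N}$ and the lower bound $c(\mathcal{N})$ on $J$ depend on $\mathbf{\nu}$, and as $|p|\to\infty$ the direction $p/|p|$ need not converge. One must either state the estimate along fixed directions (which is what the $\liminf$ in \eqref{eq:behaviour:DpHp:compact} literally demands), or, for a uniform statement, invoke lower semicontinuity of $\mathbf{\nu}\mapsto\rho(\mathbf{\nu})$ together with compactness of the sphere to get a locally uniform positive lower bound on $J$ near the relevant radii. A secondary technical point is that, unlike in Lemma \ref{lemma:H:basic}, the cone used here must be aligned with $\mathbf{\nu}$ and reach out essentially to the edge of $\supp J$ in that direction, so the geometric argument placing $\mathcal{N}$ inside $\{p\cdot y>0\}$ with $p\cdot y$ close to $\rho(\mathbf{\nu})|p|$ has to be done carefully rather than quoted.
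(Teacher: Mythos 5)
Your proposal is correct and follows essentially the same route as the paper: split $p\cdot D\Hess(p)$ into the contributions of $\{p\cdot y\leq 0\}$ (bounded below by $-c(J)|p|$) and of a small region near the edge of $\supp J$ in the direction $\mathbf{\nu}$ where continuity gives $J\geq c>0$ and $p\cdot y\geq(\rho(\mathbf{\nu})-2\eta)|p|$, then take logarithms and let the parameters tend to their limits. The paper uses a cone--annulus intersection $\mathcal{C}^+_{\eps,\beta}$ where you use a small ball around a point $r\mathbf{\nu}$ with $J(r\mathbf{\nu})>0$, and it likewise fixes the direction $\mathbf{\nu}$ before sending $|p|\to\infty$, exactly as you anticipate in your final remark.
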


\begin{proof}
Let us first choose a unit vector $\mathbf{\nu}$, define $\rho(\mathbf{\nu})$ by \eqref{eq:compact.support.rho}
and consider $p\in\R^N$ going to infinity in
this direction: $p/|p|=\mathbf{\nu}$ and $|p|\to\infty$. We begin with writing
\[
    p\cdot D\Hess(p)=
    \int_{\{p\cdot y\leq 0\}\cap\{|y|>\rho_0/2\}} p\cdot y\e^{p\cdot y}J(y)\dy+\int_{\{p\cdot y> 0\}\cap\{|y|>\rho_0/2\}} p\cdot y\e^{p\cdot y}J(y)\dy
\]
The first term is bounded by from below by $-c|p|$ and for the second one we define
for $\varepsilon>0$ and $\beta<1$, the set
    \begin{equation*}
    \mathcal{C}^+_{\varepsilon,\beta}=\{y: \frac{\rho(\mathbf{\nu})}{\beta}\leq |y|\leq \rho(\mathbf{\nu}),\
	p\cdot y\geq (1-\varepsilon)|p||y|\geq 0\}\cap\{|y|>\rho_0/2\}\,.
    \end{equation*}
 Hence
    \begin{equation*}
    \begin{aligned}
    \int_{\{p\cdot y> 0\}\cap\{|y|>\rho_0/2\}} p\cdot y\e^{p\cdot y}J(y)\dy
            &\geq |p|\frac{\rho(\mathbf{\nu})}{\beta}(1-\varepsilon)\e^{(1-\varepsilon)|p|\frac{\rho(\mathbf{\nu})}{\beta}}
            \int_{\mathcal{C}^+_{\varepsilon,\beta}}J(y)\dy\\
            & \geq
            C(\varepsilon,\beta)|p|\frac{\rho(\mathbf{\nu})}{\beta}(1-\varepsilon)\e^{(1-\varepsilon)|p|
			\frac{\rho(\mathbf{\nu})}{\beta}}.
    \end{aligned}
    \end{equation*}

Notice that $C(\eps,\beta)>0$
 since $J$ is continuous and positive near $\rho(\mathbf{\nu})\mathbf{\nu}$, even if this constant
could be small. Summing up,
    \begin{equation*}
        p\cdot D\Hess(p)\geq  C(\varepsilon,\beta)|p|\frac{\rho(\mathbf{\nu})}{\beta}(1-\varepsilon)\e^{(1-\varepsilon)|p|\frac{\rho(\mathbf{\nu})}{\beta}}-|p|c
            \geq KC(\varepsilon,\beta)|p|\frac{\rho(\mathbf{\nu})}{\beta}(1-\varepsilon)\e^{(1-\varepsilon)|p|\frac{\rho(\mathbf{\nu})}{\beta}},
    \end{equation*}
for some constant $K$. Therefore, we obtain for every $\beta$ and $\varepsilon$
    \begin{equation*}
        \liminf_{|p|\to\infty}\frac{\ln(p\cdot D\Hess(p))}{\rho(\mathbf{\nu}) |p|}\geq
        \liminf_{|p|\to\infty}\left(\frac{\ln C(\varepsilon,\beta)}{\rho(\mathbf{\nu}) |p|}+\frac{\ln(\frac{\rho(\mathbf{\nu})}{\beta}|p|(1-\varepsilon))}{\rho(\mathbf{\nu}) |p|}+\frac{\frac{\rho(\mathbf{\nu})}{\beta} |p|(1-\varepsilon)}{\rho(\mathbf{\nu}) |p|}\right)=\frac{1-\varepsilon}{\beta}.
    \end{equation*}
Now, letting $\varepsilon\to 0$ and $\beta\to 1$ we conclude that~\eqref{eq:behaviour:DpHp:compact} holds.
    \end{proof}

In other words, using Lemma~\ref{lemma:H:less:pDHp} and~\ref{lemma:H:singular}, we have obtained
       $$ \liminf_{|q|\to\infty}\frac{\ln L(q)}{\rho(\mathbf{\nu}) |p_0(q)|}\geq1\,.$$

Then in order to have a more explicit bound using Theorem \ref{thm:est-IR}, we shall compare with a symmetric kernel,
using then the radius of the support of $J$.

\begin{theorem}\label{theo:compact:support}
	Let $J$ be a compactly supported kernel satisfying \eqref{eq:hyp:J}. We denote by $\rho$ the
	size of the support of $J$: $$\rho=\inf\{r>0:\supp(J)\subset B_r\}\,.$$
	Then the following estimate holds: for any $\theta\in(0,1)$ and $T>0$, as $R\to\infty$,
	\begin{equation}
        \sup_{|x|\leq\theta R\atop 0\leq t\leq TR}|u-u_R|(x,t)\leq \e^{-\frac{(1-\theta)}{\rho}R\ln R+o(R\ln R)}
    \end{equation}
\end{theorem}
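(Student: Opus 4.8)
The plan is to combine the general bound of Theorem~\ref{thm:est-IR} with a sharp lower bound for the Lagrangian $L$. That theorem gives, uniformly on the relevant set, $|u-u_R|(x,t)\leq\exp\!\big(-R\,I_\infty(x/R,t/R)+o(1)R\big)$ with $I_\infty(\xi,\tau)=\min_{|y|=1}\tau\,L\big((\xi-y)/\tau\big)$; after rescaling this reads
\[
	R\,I_\infty(x/R,t/R)=\min_{|y|=1}t\,L\Big(\frac{x-Ry}{t}\Big).
\]
The whole difficulty is therefore to control $L(q)$ as $|q|\to\infty$: when $|x|\leq\theta R$, every unit vector $y$ satisfies $|x-Ry|\geq R-|x|\geq(1-\theta)R$, so the argument of $L$ above is forced to infinity as $R\to\infty$ — this is exactly why one works with $\theta<1$, away from $\partial B_R$ (cf.\ the remark after Theorem~\ref{thm:est-IR}). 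So the key step, and the main obstacle, is to prove
\begin{equation}\label{planeq:L}
	\liminf_{|q|\to\infty}\frac{\rho\,L(q)}{|q|\,\ln|q|}\geq 1 .
\end{equation}

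For \eqref{planeq:L} one cannot use Lemma~\ref{lem:compactsupport} directly: it only gives $\ln L(q)\gtrsim\rho(\nu)\,|p_0(q)|$, which is in the wrong (logarithmic) scale and carries the direction-dependent radius $\rho(\nu)\leq\rho$, not enough to fix the constant $1/\rho$. The idea is instead to compare $J$ from above with a radial kernel of essentially the same support. Fix $\eps>0$ and pick a radial kernel $\bar J_\eps$, smooth away from the origin and modified near the origin so as to satisfy \eqref{eq:hyp:J}--\eqref{eq:hyp2:J}, with $J\curlyeqprec\bar J_\eps$ and $\supp\bar J_\eps\subset\overline{B_{\rho+\eps}}$; concretely one may take $\bar J_\eps(y)\geq\sup_{|z|=|y|}J(z)$ on $B_\rho$, let it taper to $0$ on $\{\rho<|y|<\rho+\eps\}$, and add a small bump in some annulus $\{a<|y|<b\}$ with $\rho_0/2<a<b<\rho_0$ to obtain the strict inequality of Definition~\ref{def:ess.ordered}. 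Lemma~\ref{lemma:reduce:symmetric} then gives $L(q)\geq L_{\bar J_\eps}(q)$ for $|q|$ large. Since $\bar J_\eps$ is symmetric and compactly supported with support radius $\rho+\eps$, one has $L_{\bar J_\eps}(q)=\frac{|q|\,\ln|q|}{\rho+\eps}\,(1+o(1))$ as $|q|\to\infty$: this is the symmetric, regular case of~\cite{BrandleChasseigne08-1}, and it is also recoverable from Section~\ref{sect:preliminaries} — writing $\Hess_{\bar J_\eps}(p)=h(|p|)$, a Laplace-type estimate yields $|q|=h'(|p_0(q)|)$ with $\ln|q|=(\rho+\eps)\,|p_0(q)|+O(\ln|p_0(q)|)$, so that $L_{\bar J_\eps}(q)=|p_0(q)|\,|q|-H_{\bar J_\eps}(p_0(q))\sim|p_0(q)|\,|q|\sim\frac{|q|\,\ln|q|}{\rho+\eps}$, the first equivalence using Lemmas~\ref{lemma:H:less:pDHp} and~\ref{lemma:H:singular}. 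Letting $\eps\to0$ yields \eqref{planeq:L}.

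It remains to insert \eqref{planeq:L} into $R\,I_\infty$. For $|x|\leq\theta R$, any $|y|=1$, and $R$ large, \eqref{planeq:L} gives
\[
	t\,L\Big(\frac{x-Ry}{t}\Big)\;\geq\;\frac{|x-Ry|}{\rho}\,\ln\!\Big(\frac{|x-Ry|}{t}\Big)(1+o(1)),
\]
and since $r\mapsto\frac{r}{\rho}\ln(r/t)$ is increasing on the range of values of $|x-Ry|$ and $|x-Ry|\geq(1-\theta)R$, the right-hand side is at least $\frac{(1-\theta)R}{\rho}\ln\!\big(\frac{(1-\theta)R}{t}\big)(1+o(1))$; in the regime where $t$ stays at most of the order of a fixed constant (so that $\ln(R/t)=(1+o(1))\ln R$) this equals $\frac{(1-\theta)}{\rho}R\ln R\,(1+o(1))$, uniformly in $x$ and $t$. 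Hence $R\,I_\infty(x/R,t/R)\geq\frac{(1-\theta)}{\rho}R\ln R\,(1+o(1))$, and Theorem~\ref{thm:est-IR}, whose error $o(1)R$ is absorbed into $o(R\ln R)$, gives the announced estimate (the case $t=0$ being trivial). The delicate points are, as said, estimate~\eqref{planeq:L} — Lemma~\ref{lem:compactsupport} being genuinely insufficient on its own — and, secondarily, checking that the radial majorant $\bar J_\eps$ can be arranged to fall under Lemma~\ref{lemma:reduce:symmetric} while $\eps\to0$ recovers exactly the support radius $\rho$.
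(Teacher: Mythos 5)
Your proposal is correct and follows essentially the same route as the paper: reduce to a symmetric majorant via Lemma~\ref{lemma:reduce:symmetric}, invoke the $|q|\ln|q|/\rho$ asymptotics of the symmetric compactly supported Lagrangian from \cite{BrandleChasseigne08-1}, and insert this into Theorem~\ref{thm:est-IR}. The only (harmless) variation is that you majorize by a radial kernel supported in $B_{\rho+\eps}$ and let $\eps\to0$, whereas the paper takes the majorant's support to be exactly $B_\rho$.
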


Notice that	$J$ can be asymmetric and have a singularity at the origin.

\begin{proof}
	We first use Lemma \ref{lemma:reduce:symmetric} to reduce our estimate to the case
	of symmetric, compactly supported kernels. More precisely, using Lemma \ref{lemma:reduce:symmetric},
	if $J_*$ is a symmetric	kernel such that $J\curlyeqprec J_*$ and $\supp(J_*)=B_\rho$, then for $|p|$ big enough,
	$L_*\leq L$ where $L_*$	is the Lagrangian associated to $J_*$.
	Taking a look at Theorem \ref{thm:est-IR}, this implies that
	\begin{eqnarray*}
		-RI_\infty(x/R,t/R)&=&-R\min_{y\in \partial B_1}(t/R)L\Big(\frac{x/R-y}{t/R}\Big)\\
		&\leq&-R\min_{y\in\partial B_1}(t/R)L_*\Big(\frac{x/R-y}{t/R}\Big)\\
		&\leq&-\min_{y\in\partial B_R}t\,L_*\Big(\frac{x-y}{t}\Big)\,.\\
	\end{eqnarray*}
	Now we assume that $|x|<\theta R$ so that in this set $|x-y|\geq(1-\theta)R\to\infty$ and we shall use
	the behaviour at infinity of $L_*$. Lemma \ref{lemma:H:singular} allows us to wipe out the possible
	singular part near the origin as well as the differential terms of the Hamiltonian.

	Since $J_*$	is symmetric, so is $\Less_*$ so that, noting $\Less_*(x)=\underline{L}_*(|x|)$ we get
	\begin{equation*}
		\liminf_{R\to\infty}\frac{-RI_\infty(x/R,t/R)}{-\min\limits_{y\in\partial B_R}t\underline{L}_*\Big(\dfrac{|x-y|}{t}\Big)}\leq 1\,.
	\end{equation*}
	
	Then we use the results of \cite[Lemma 4.1 and Corollary 4.2]{BrandleChasseigne08-1} applied to
	$\underline{L}_*$ which is symmetric, associated to a nonsingular kernel to conclude.
	\end{proof}

\section{Intermediate kernels}\label{sect:intermediate}
\setcounter{equation}{0}

We consider now a general kernel $J$ satisfying \eqref{eq:hyp:J},
positive everywhere in $\R^N$, so that we can always write
    $$
        J(y)=\e^{-|y|\omega(y)}\,, \text{ with }\omega(y)=-\frac{\ln J(y)}{|y|}\,.
    $$
We will now make some further assumptions in this section:
    \begin{equation}
    \label{eq:hyp:yw}\begin{cases}
		J\text{ is }\mathrm{C}^1\text{-smooth for }|y|>0,\\[2mm]
		|DJ(y)|\text{ is bounded on }\{|y|>\eps\},\ \forall\eps>0,\\[2mm]
        y\mapsto|y|\omega(y) \mbox{ is superlinear and convex,}\\[2mm]
	\ds\exists\eta\in(0,1]\,,\ \liminf_{|y|\to+\infty}
	\frac{( y\cdot D\omega(y))}{|y||D\omega(y)|}\geq\eta.
	\end{cases}
    \end{equation}

Let us comment these hypotheses:
\begin{enumerate}
\item The regularity assumption on $J$ (which implies the same regularity for $\omega$) is not crucial since
by comparison we can deal with less regular kernels, using the results of Section~\ref{sect:preliminaries}.
\item About the convexity of $|y|\omega(y)$, it is actually only required for large $|y|$ for the same reason:
we only care about the tail of $J$.
\item The case of compactly supported kernels, which would correspond to $\omega(y)=+\infty$
outside a ball is treated in Section \ref{sect:compact}. On the other hand, the case of critical kernels
treated in Section \ref{sect:critical} corresponds to $\lim \omega(y)=\ell<\infty$.
So, the assumption of superlinearity is in between: $\lim \omega(y)=\infty$. This is why we speak of
\textit{intermediate} kernels here.
\item The superlinearity assumption implies that $J$ automatically satisfies \eqref{eq:hyp2:J} since indeed,
	for any $\beta>0$, we have that $\omega(y)>\beta$ for $|y|$ large enough.
\item The convexity and superlinearity assumptions altogether allow us to define the Legendre transform
	$\mathcal{K}(\cdot)$ of $|y|\omega(y)$, which will also be superlinear and convex, see~\cite{Rockafellar}:
    \begin{equation}
    \label{eq:K}
        \mathcal{K}(p):=\sup_{y\in\R^N}\big\{p\cdot y -|y|\omega(y)\big\}\,.
    \end{equation}
This function $\mathcal{K}(\cdot)$ will play a big role in estimating the rate of convergence. Notice that it is also the Legendre transform of $\ln(1/J)$.
\item The ``angle'' condition on $D\omega$ says that the gradient cannot take a purely tangential position. This is
a very weak assumption in this form that allows us to derive a minimum behaviour for non-symmetric kernels.
\end{enumerate}

\subsection{Properties of $\mathcal{K}$}

Thanks to Lemma \ref{lemma:H:less:pDHp}, we know that $$L(q)\sim p_0(q)\cdot D H(p_0(q)),$$
where $p_0(q)$ is such that $L(q)=p_0\cdot q -H(p_0)$. Thus, a main step consists in
finding a lower bound for $p\cdot DH(p)$. Here is where  $\mathcal{K}(p)$ plays an important role: roughly speaking, we will see that
$$
\lq\lq\, p\cdot DH(p)=\int_{\R^N}p\cdot y \e^{p\cdot y-|y|\omega(y)}\dy \geq \e^{\mathcal{K}(p)}\,"\,.
$$
Hence a detailed study of the properties of $\mathcal{K}$ is needed. To this aim, let $y_0(p)$ be the point where the sup in
\eqref{eq:K} is attained.

\begin{lem}
    \label{lemma:maximum}
	Let us assume that $J(y)=\e^{-|y|\omega(y)}$ satisfies \eqref{eq:hyp:J} and \eqref{eq:hyp:yw}.
	Then the function $\mathcal{K}$ is nonnegative and $\mathcal{K}(0)=0$.
	Moreover, for $|p|$ large enough, the supremum in~\eqref{eq:K} is attained at a unique point $y_0=y_0(p)$ and
	$|y_0(p)|\to\infty$ as $|p|\to\infty$.
\end{lem}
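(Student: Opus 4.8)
The plan is to establish the three assertions in turn, all of them by exploiting the definition \eqref{eq:K} together with the convexity and superlinearity of $y\mapsto|y|\omega(y)$. First, nonnegativity of $\mathcal{K}$ and the value $\mathcal{K}(0)=0$: since $J$ satisfies \eqref{eq:hyp:J}, the function $y\mapsto|y|\omega(y)$ vanishes at $y=0$ (indeed $0|\omega(0)|$ must be interpreted as a limit, but more simply $|y|\omega(y)=-\ln J(y)$ need not vanish at $0$; what matters is that it is finite for $|y|$ in the support and superlinear). The clean argument is: $\mathcal{K}(p)=\sup_y\{p\cdot y-|y|\omega(y)\}\geq p\cdot 0 - 0\cdot\omega(0)$; more carefully, since $|y|\omega(y)\to 0$ along a sequence $y\to 0$ inside $\supp(J)$ (using continuity of $J$ and $J>0$ near $0$ by \eqref{eq:hyp:J}, so $-\ln J(y)$ stays bounded while $|y|\to 0$), we get $\mathcal{K}(p)\geq 0$ for every $p$, and in particular $\mathcal{K}(0)\geq 0$. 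For the reverse at $p=0$: $\mathcal{K}(0)=\sup_y\{-|y|\omega(y)\}=-\inf_y|y|\omega(y)$, and since $|y|\omega(y)=-\ln J(y)\geq 0$ wherever $J(y)\leq 1$ and equals $0$ in the limit $y\to 0$, the infimum is $0$, hence $\mathcal{K}(0)=0$.

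Second, existence and uniqueness of the maximizer $y_0(p)$ for $|p|$ large. Here I would invoke standard convex analysis (the reference \cite{Rockafellar} is already cited in the excerpt): $\mathcal{K}$ is the Legendre--Fenchel transform of the proper, convex, superlinear, lower-semicontinuous function $g(y):=|y|\omega(y)$. Superlinearity of $g$ forces the map $y\mapsto p\cdot y-g(y)$ to be coercive (it tends to $-\infty$ as $|y|\to\infty$) for \emph{every} fixed $p$, so the supremum is attained; continuity of $g$ on $\R^N\setminus\{0\}$ plus the behaviour near the origin gives attainment at some $y_0(p)$. Uniqueness, however, is the point where one must be a little careful, because $g$ is only assumed convex, not \emph{strictly} convex. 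I expect this to be the main obstacle. The resolution: for $|p|$ large the maximizer cannot be near the origin (see the third point below), so $y_0(p)$ lies in the region $|y|>0$ where $g$ is $\mathrm{C}^1$; by the first-order condition $p=Dg(y_0)$, and if there were two maximizers $y_0,y_0'$ they would both satisfy $Dg(\cdot)=p$, while convexity of $g$ gives $(Dg(y_0)-Dg(y_0'))\cdot(y_0-y_0')\geq 0$ with equality forcing (by convexity and the superlinear/angle structure, or simply by the fact that $g$ restricted to the segment is affine, which the superlinearity of $g$ together with $g\geq 0$ and $g(0)=0$-type normalization excludes for large $p$) $y_0=y_0'$. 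If strict convexity is genuinely needed and not available, one falls back on selecting $y_0(p)$ as, say, the maximizer of least norm; the statement ``the supremum is attained at a unique point'' should then be read modulo this convention, and nothing in the sequel uses more than one maximizer.

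Third, $|y_0(p)|\to\infty$ as $|p|\to\infty$. The idea is: if $|y_0(p_n)|$ stayed bounded along some sequence $|p_n|\to\infty$, say $|y_0(p_n)|\leq M$, then on one hand $\mathcal{K}(p_n)=p_n\cdot y_0(p_n)-g(y_0(p_n))\leq |p_n| M + C_M$ where $C_M=\sup_{|y|\leq M}|g(y)|<\infty$; on the other hand, testing \eqref{eq:K} with a fixed unit vector $e$ in the direction of $p_n$ and a fixed radius $r$ gives $\mathcal{K}(p_n)\geq r|p_n|-g(re)$ — wait, this only gives a linear lower bound too. The correct comparison is to test with $y=r_n\,p_n/|p_n|$ where $r_n\to\infty$ slowly: $\mathcal{K}(p_n)\geq r_n|p_n|-g(r_n p_n/|p_n|)$, and by superlinearity $g(r_n p_n/|p_n|)=o(r_n|p_n|)$ provided $r_n\to\infty$, so $\mathcal{K}(p_n)\geq r_n|p_n|(1+o(1))$. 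Choosing $r_n=M+1$ constant already gives $\mathcal{K}(p_n)\geq (M+1)|p_n|-C_{M+1}$, which contradicts $\mathcal{K}(p_n)=p_n\cdot y_0(p_n)-g(y_0(p_n))\leq |p_n|\,|y_0(p_n)|-0\leq M|p_n|$ once $|p_n|$ is large (using $g\geq 0$, i.e. the nonnegativity established in the first step, or at least $g\geq -C$). Hence $|y_0(p_n)|>M$ for $|p_n|$ large; since $M$ was arbitrary, $|y_0(p)|\to\infty$. This last point also retroactively justifies that, for $|p|$ large, $y_0(p)$ avoids the origin, closing the uniqueness argument.
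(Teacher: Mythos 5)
Your proposal is correct in substance and reaches all three conclusions, but it diverges from the paper's proof at the key step. For $|y_0(p)|\to\infty$ the paper does not use a value comparison at all: it writes the first-order condition $p=D(|y|\omega(y))(y_0)=DJ(y_0)/J(y_0)$ and observes that, by \eqref{eq:hyp:yw}, $|DJ|$ stays bounded away from the origin while $J(y)\to0$ only as $|y|\to\infty$, so $|p|\to\infty$ forces $|y_0(p)|\to\infty$. Your argument --- testing \eqref{eq:K} at $y=(M+1)p/|p|$ to get $\mathcal{K}(p)\geq(M+1)|p|-C_{M+1}$ and contradicting the bound $\mathcal{K}(p)\leq M|p|+C_M$ that a maximizer in $B_M$ would give --- is more elementary and, notably, does not invoke the hypothesis that $|DJ|$ is bounded on $\{|y|>\eps\}$; it is a perfectly valid substitute. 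For uniqueness, the two arguments are essentially the same in spirit and share the same soft spot: the paper restricts to $\{|y|>C\}$ where $|y|\omega(y)$ is convex (hence $\varphi_p(y)=p\cdot y-|y|\omega(y)$ is concave, independently of $p$), uses the already-established fact that maximizers escape to infinity to place them in that region, and then rules out multiple maximizers by asserting that $\varphi_p$ cannot be constant on a set because $\omega\to\infty$; like your ``affine on a segment is excluded by superlinearity'' step, this does not literally follow (superlinearity does not forbid affineness on a bounded segment), so your honest flagging of the issue and the least-norm fallback is fair --- and indeed nothing downstream uses more than one maximizer. Finally, for $\mathcal{K}\geq0$ and $\mathcal{K}(0)=0$ the paper simply evaluates $\varphi_p(0)=0$, glossing over the normalization $-\ln J(0)=0$ that you (rightly) worry about; this only shifts $\mathcal{K}$ by a constant and is immaterial for the asymptotics, so your more cautious phrasing and the paper's terse one are equally acceptable.
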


\begin{proof}
For $p\in\R^N$ fixed, let us denote by $$\varphi_p(y):=p\cdot y-|y|\omega(y)\,.$$
Then observe that $\varphi(0)=0$, so that $\mathcal{K}(p)=\sup \varphi_p\geq0$ and
of course $\mathcal{K}(0)=0$.
The superlinearity $y\mapsto|y|\omega(y)$ implies
that $\varphi_p(y)\to-\infty$ as $|y|\to\infty$, so that there exists at least
a maximum point $y_0(p)$. Now, any maximum point satisfies:
$$p=D(|y|\omega(y))(y_0)=\frac{DJ(y_0)}{J(y_0)}\,,$$
and by our assumptions, $J(y)\to0$ as $|y|\to\infty$ while $|DJ|$ remains bounded
away from $y=0$. Thus, as $|p|\to\infty$, necessarily $|y_0(p)|\to+\infty$.

We shall now prove that for $|p|$ large enough, the maximum point $y_0(p)$ is unique.
Since $y\mapsto|y|\omega(y)$ is convex for large $|y|$, say $|y|>C$, then $\varphi_p$ is concave for $|y|>C$,
independently of $p$. Indeed, this comes from the fact that $D^2\varphi_p(y)=-D^2(|y|\omega(y))$.
Thus we take $M>0$ large enough so that for any $|p|>M$, any minimum point $y_0(p)$ satisfies
$|y_0(p)|>C$ and enters the region where $\varphi_p$ is concave.
Then, the only case when there may exist several maximum points is the case when
$\varphi_p$ would be constant on some open set. But since $\omega(y)\to+\infty$ as $|y|\to\infty$,
this cannot happen for $y$ large. Hence if $|p|$ is large, the maximum is attained at a unique point
$y_0(p)$.
\end{proof}

%

\begin{lem}
	Under hypothesis \eqref{eq:hyp:yw}, we have
	\begin{equation}
		\liminf_{|p|\to+\infty}
		\frac{( p\cdot y_0(p))}{|p||y_0(p)|}\geq\eta\,.
	\end{equation}
\end{lem}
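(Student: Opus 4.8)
The plan is to exploit the first-order optimality condition for the supremum defining $\mathcal{K}(p)$. By Lemma~\ref{lemma:maximum}, for $|p|$ large enough the maximum of $\varphi_p(y)=p\cdot y-|y|\omega(y)$ is attained at a unique point $y_0=y_0(p)$, and $|y_0(p)|\to\infty$ as $|p|\to\infty$. Since $J$, hence $\omega$, is $\mathrm{C}^1$ away from the origin, $y_0$ is an interior critical point of $\varphi_p$, so
\[
p=D\big(|y|\,\omega(y)\big)(y_0)=\frac{y_0}{|y_0|}\,\omega(y_0)+|y_0|\,D\omega(y_0).
\]
Everything then reduces to elementary manipulations of this identity combined with the angle hypothesis in \eqref{eq:hyp:yw}.

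First I would take the scalar product of the identity with $y_0$ and bound $|p|$ by the triangle inequality, which yields
\[
p\cdot y_0=|y_0|\,\omega(y_0)+|y_0|\,\big(y_0\cdot D\omega(y_0)\big),\qquad
|p|\leq \omega(y_0)+|y_0|\,|D\omega(y_0)|,
\]
where we used $\omega(y_0)>0$ (valid for $|p|$, hence $|y_0|$, large since $\omega\to+\infty$). Then I would feed in the angle condition: given $\eps\in(0,\eta)$ there is $R_\eps$ with $y\cdot D\omega(y)\geq(\eta-\eps)\,|y|\,|D\omega(y)|$ for all $|y|>R_\eps$, and since $|y_0(p)|\to\infty$ this applies to $y_0(p)$ once $|p|$ is large. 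Writing $a:=|y_0|\,\omega(y_0)>0$ and $b:=|y_0|^2\,|D\omega(y_0)|\geq0$, the two displays above give $p\cdot y_0\geq a+(\eta-\eps)b\geq0$ and $0<|p|\,|y_0|\leq a+b$, whence
\[
\frac{p\cdot y_0}{|p|\,|y_0|}\;\geq\;\frac{a+(\eta-\eps)\,b}{a+b}\;\geq\;\eta-\eps,
\]
the last step being the elementary inequality $\frac{a+\lambda b}{a+b}\geq\lambda$ for $a,b\geq0$, $a+b>0$, $\lambda\leq1$ (here $\lambda=\eta-\eps\in(0,1]$). Letting $|p|\to\infty$ and then $\eps\to0$ gives the claimed bound.

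The computation is essentially routine: the statement is just the angle condition on $D\omega$ transported through the Legendre duality by the critical-point relation $p=D(|y|\omega(y))(y_0)$. The only points needing a word of care are the bookkeeping with $\eps$ in the $\liminf$, the positivity $\omega(y_0)>0$ that legitimizes the weighted-average estimate, and the degenerate case $D\omega(y_0)=0$, which is harmless since then $p$ is parallel to $y_0$ and the ratio equals $1\geq\eta$. I do not anticipate any genuine obstacle.
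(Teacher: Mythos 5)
Your proof is correct and follows essentially the same route as the paper: take the critical-point identity $p=D(|y|\omega(y))(y_0)$, dot it with $y_0$, bound $|p|$ by the triangle inequality, and apply the angle hypothesis; your version merely makes the $\eps$-bookkeeping and the weighted-average inequality explicit, which the paper leaves implicit.
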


\begin{proof}
	By the definition of $y_0(p)$, we have $p=D(|y|\omega(y))(y_0)$ so that
	$$|p|\leq\omega(y_0)+|y_0||D\omega(y_0)|$$
	and
	$$ ( y_0\cdot p)=|y_0|\omega(y_0)+|y_0|( y_0 \cdot D\omega(y_0)).$$
	This implies, using  \eqref{eq:hyp:yw} that
	$$( y_0\cdot p)\geq\eta\big(|y_0|\omega(y_0)+|y_0|^2|D\omega(y_0)|\big)=\eta|y_0||p|\,,$$
	hence the result follows taking the liminf.
\end{proof}

\subsection{Estimating $L(q)$}

We begin by a technical lemma that will help us in constructing
a box where $p\cdot y-|y|\omega(y)$ is close to $\mathcal{K}(p)$ (see below):

\begin{lem}\label{lemma:unif.eta}
	There exists $M>0$ such that for any $|p|>M$ and
	any $\xi\in B(y_0(p),1/|p|)$, we have
	\[
		\frac{( D\omega(\xi)\cdot \xi)}{|D\omega(\xi)||\xi|}\geq\frac{3\eta}{4}>0\,,
	\]
with $\eta$ defined in~\eqref{eq:hyp:yw}.
\end{lem}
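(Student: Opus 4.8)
The statement is a uniform (in $p$, for $|p|$ large) version of the angle condition in the fourth line of \eqref{eq:hyp:yw}: we already know from the previous lemma that $(p\cdot y_0(p))/(|p||y_0(p)|)\geq\eta$ asymptotically, and that $|y_0(p)|\to\infty$ as $|p|\to\infty$ by Lemma~\ref{lemma:maximum}; now we must transfer the lower bound $\eta$ (with a loss to $3\eta/4$) on the angle between $D\omega$ and the position vector from the point $y_0(p)$ to the whole ball $B(y_0(p),1/|p|)$. The basic idea is a continuity argument: the ball $B(y_0(p),1/|p|)$ has radius $1/|p|\to 0$, while its center $y_0(p)$ escapes to infinity, so any point $\xi$ in that ball satisfies $|\xi|\to\infty$ and $|\xi - y_0(p)|\to 0$, and hence the relative perturbation $|\xi-y_0(p)|/|\xi|\to 0$ uniformly.

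First I would fix $\varepsilon_0>0$ small (to be chosen so that $\eta-\varepsilon_0\geq 3\eta/4$, i.e. $\varepsilon_0=\eta/4$). By \eqref{eq:hyp:yw} there is $R_0$ such that $(y\cdot D\omega(y))/(|y||D\omega(y)|)\geq \eta - \varepsilon_0/2$ for all $|y|>R_0$. Next I would estimate the difference between the normalized quantity evaluated at $\xi$ and at $y_0(p)$. Writing $n(y):=(y\cdot D\omega(y))/(|y||D\omega(y)|)$, the key is to bound $|n(\xi) - n(y_0(p))|$. This requires control on how much $D\omega$ and the normalization $y/|y|$ can vary between $\xi$ and $y_0$; for the direction $y/|y|$ this is easy since $|\xi/|\xi| - y_0/|y_0||\leq 2|\xi-y_0|/\min(|\xi|,|y_0|)\leq 2/(|p|\min(|\xi|,|y_0|))\to 0$. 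For the term $D\omega(\xi)/|D\omega(\xi)|$ versus $D\omega(y_0)/|D\omega(y_0)|$ one uses the $\mathrm{C}^1$-smoothness of $\omega$: on the ball $B(y_0,1/|p|)$ one has $|D\omega(\xi)-D\omega(y_0)|\leq \sup_{B(y_0,1/|p|)}\|D^2\omega\|\cdot |\xi-y_0|$, and then divides by $|D\omega(y_0)|$ (which stays bounded below — this needs justification, see below) to get the angular perturbation going to zero. Combining, $n(\xi)\geq n(y_0(p)) - \varepsilon_0/2 \geq \eta - \varepsilon_0 = 3\eta/4$ for $|p|$ large, uniformly in $\xi\in B(y_0(p),1/|p|)$, which is exactly the claim.

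The main obstacle I anticipate is controlling the normalization of $D\omega$: to make the Taylor-type estimate on $D\omega(\xi)/|D\omega(\xi)|$ useful, one needs both an upper bound on the second derivative of $\omega$ near $y_0$ (scaled correctly against the radius $1/|p|$) and a \emph{lower} bound on $|D\omega(y_0(p))|$ bounded away from zero as $|p|\to\infty$. The latter is where some care is needed: from $p = D(|y|\omega(y))(y_0) = \omega(y_0)\frac{y_0}{|y_0|} + |y_0|D\omega(y_0)$ and the angle condition one can extract that $|y_0||D\omega(y_0)|$ is comparable to $|p|$ (as in the previous lemma's computation $(y_0\cdot p)\geq \eta(|y_0|\omega(y_0)+|y_0|^2|D\omega(y_0)|)$ together with $|y_0|\omega(y_0), |y_0|^2|D\omega(y_0)|\geq 0$), so $|D\omega(y_0)|\gtrsim |p|/|y_0|$. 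Then the angular perturbation from Taylor is of order $|\xi-y_0|\cdot \sup\|D^2\omega\| / |D\omega(y_0)| \lesssim (1/|p|)\cdot\sup\|D^2\omega\|\cdot|y_0|/|p|$; one needs the growth of $\sup\|D^2\omega\|$ on such balls to be controlled relative to $|p|^2/|y_0|$. If the bare hypotheses of \eqref{eq:hyp:yw} do not directly give this, the cleanest route is instead to argue entirely at the level of the map $y\mapsto D(|y|\omega(y))$: since $|y|\omega(y)$ is convex and superlinear, its gradient is a monotone homeomorphism and one can pull the smallness of $1/|p|$ through its inverse, showing that the preimage of $B(p, \text{small})$ has small diameter relative to $|y_0|$; combined with continuity of $n(\cdot)$ on the (compactified) set $\{|y|\geq R_0\}$ this yields the uniform bound without ever differentiating twice. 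I would present whichever of these two arguments the earlier lemmas make shortest — most likely the second, since it only uses properties already established.
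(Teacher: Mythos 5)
Your proposal misreads what the lemma requires and, as a result, builds a perturbation argument that is both unnecessary and not supported by the hypotheses. The angle condition in \eqref{eq:hyp:yw} is a \emph{pointwise} condition valid at every $y$ of large norm: $\liminf_{|y|\to\infty} (y\cdot D\omega(y))/(|y||D\omega(y)|)\geq\eta$ means there exists $m>0$ such that $n(y):=(y\cdot D\omega(y))/(|y||D\omega(y)|)\geq 3\eta/4$ for \emph{all} $|y|>m$. Since $|y_0(p)|\to\infty$ (Lemma~\ref{lemma:maximum}) and every $\xi\in B(y_0(p),1/|p|)$ satisfies $|\xi|\geq |y_0(p)|-1/|p|>m$ for $|p|$ large, you simply apply the hypothesis at $y=\xi$ and you are done. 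This is exactly the paper's proof, and it is contained verbatim in the first sentence of your detailed argument ("By \eqref{eq:hyp:yw} there is $R_0$ such that $n(y)\geq\eta-\varepsilon_0/2$ for all $|y|>R_0$"). There is nothing to "transfer" from $y_0(p)$ to $\xi$: the bound at $\xi$ does not come from the bound at $y_0(p)$, it comes directly from the hypothesis because $|\xi|$ is large.

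The detour you then take --- estimating $|n(\xi)-n(y_0(p))|$ via a Taylor expansion of $D\omega$ and a lower bound on $|D\omega(y_0(p))|$ --- would be a genuine gap if it were actually needed: the hypotheses only give $J$ (hence $\omega$) of class $\mathrm{C}^1$ away from the origin, so $\sup\|D^2\omega\|$ on $B(y_0(p),1/|p|)$ is not available, and your fallback via the monotone gradient map of $|y|\omega(y)$ controls the wrong object (the diameter of a preimage, not the variation of the normalized vector field $D\omega/|D\omega|$). Fortunately none of this machinery is required. The only role the radius $1/|p|$ plays in the lemma is the trivial one: it keeps $|\xi|$ within $1/|p|$ of $|y_0(p)|$, hence large.
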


\begin{proof}
	Notice first that since $\xi$ is at distance at most $1/|p|$ of $y_0$ (which goes at infinity as $|p|\to\infty$),
	then all the points $\xi\in  B(y_0(p),1/|p|)$ verify that $|\xi|$ is big provided $|p|$ is also big.
	More precisely, using \eqref{eq:hyp:yw} we know there exists a $m>0$ such that for any $|y|>m$,
	\[
		\frac{( D\omega(y)\cdot y)}{|D\omega(y)||y|}\geq\frac{3\eta}{4}>0\,.
	\]
	Now, since $|y_0(p)|\to\infty$ as $|p|\to\infty$, there exists $M>0$ such that
	if $|p|>M$ then any	$|\xi|\in B(y_0(p),1/|p|)$ verifies $|\xi|>|y_0|-1/|p|>m$. Then
	we may apply the above estimate taking $y=\xi$, which gives the result.
\end{proof}

Let us now make clear some definitions:

\begin{definition}\label{def:cone}\label{def:angle}
	The angle between two vectors $a,b\in\R^N$ is defined as follows:
	\[
		\ang(a,b)=\arccos\frac{\pe{a}{b}}{|a||b|}\in[0,\pi]\,.
		\]
	Moreover, given a vector $a\in\R^N$ and an angle $\alpha\in[0,\pi/2]$, we define the
	positive cone $\mathcal{C}^+$ in the direction $a$ with aperture $\alpha$
	as follows:
	\[
		\mathcal{C}^+(a,\alpha)=\big\{ b\in\R^N:\ang(a,b)\leq\alpha \big\}\,.
	\]
	Accordingly we define the negative cone as follows:
	\[
		\mathcal{C}^-(a,\alpha)=\mathcal{C}^+(-a,\alpha)\,.
	\]
\end{definition}

Notice that by considering only apertures $0\leq\alpha\leq\pi/2$ (which is enough for our purpose here),
we make sure that $\mathcal{C}^-(a,\alpha)\cap\mathcal{C}^+(a,\alpha)=\{0\}$.
The we have the following lemma:

\begin{lem}
  \label{lemma:caja}
	Let us consider the cone
	$\mathcal{C}^*(p):=\mathcal{C}^-\big(y_0(p),\pi/2-\arccos(\eta/2)\big)$
	and $$\mathcal{A}(p)=\{y_0(p)+z : |z|\leq \frac{1}{|p|},\  z\in\mathcal{C}^*(p)\}\,.$$
	Then for $|p|$ big enough, we have
	$$\forall y\in\mathcal{A}(p)\,,\quad p\cdot y-|y|\omega(y)\geq p\cdot y_0-|y_0|\omega(y_0)-1\,.$$
	Moreover, the volume of $\mathcal{A}(p)$ is bounded by
	  $$
	  \mathcal{A}(p)|\geq\frac{c(N,\eta)}{|p|^N}
	  $$
	for some constant $c(N,\eta)>0$.
\end{lem}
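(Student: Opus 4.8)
The strategy is a first-order Taylor expansion of the smooth function $\psi_p(y):=p\cdot y-|y|\omega(y)$ around its maximum point $y_0=y_0(p)$, exploiting the fact that $y_0$ is a critical point, $D\psi_p(y_0)=0$. For $y=y_0+z$ with $|z|\leq 1/|p|$ one writes
\[
	\psi_p(y_0+z)=\psi_p(y_0)+\tfrac12\,z^T D^2\psi_p(\xi)\,z
\]
for some $\xi\in[y_0,y_0+z]\subset B(y_0,1/|p|)$, using $D\psi_p(y_0)=0$. Since $D^2\psi_p=-D^2(|y|\omega(y))$ is independent of $p$, it suffices to bound $\|D^2(|y|\omega(y))\|$ on $B(y_0,1/|p|)$; by the $\mathrm{C}^1$-smoothness of $\omega$ together with the superlinearity and convexity assumptions in \eqref{eq:hyp:yw}, this Hessian grows at most polynomially (in fact the relevant bound here is only that it is finite and, say, at most of polynomial order in $|y_0|$), so $|z^T D^2\psi_p(\xi) z|\leq C\,\|D^2(|y|\omega)\|_{B(y_0,1/|p|)}/|p|^2$. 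Choosing $|p|$ large enough this is $\leq 2$, hence $\psi_p(y_0+z)\geq \psi_p(y_0)-1$, which is exactly the claimed inequality $p\cdot y-|y|\omega(y)\geq p\cdot y_0-|y_0|\omega(y_0)-1$ on all of $\mathcal{A}(p)$ (note $\mathcal{A}(p)\subset B(y_0,1/|p|)$ by construction). I would spell out why the relevant second-derivative bound is uniform enough: on the shrinking ball $B(y_0,1/|p|)$ the argument only needs that $\|D^2(|y|\omega(y))\|$ does not grow faster than, say, a fixed power of $|y|$, which follows from differentiating $|y|\omega(y)$ twice and invoking the boundedness of $|DJ|$ away from the origin in \eqref{eq:hyp:yw} together with the relation $D(|y|\omega)=DJ/J$ from Lemma~\ref{lemma:maximum}; since $|p|\to\infty$ forces $|y_0(p)|\to\infty$ with $|p|\sim D(|y|\omega)(y_0)$, a polynomial bound in $|y_0|$ is absorbed by the $1/|p|^2$ factor, so the Taylor remainder still goes to zero.

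For the volume estimate, observe that $\mathcal{A}(p)$ is a translate of the set $\{z:|z|\leq 1/|p|,\ z\in\mathcal{C}^*(p)\}$, i.e. the intersection of the ball $B(0,1/|p|)$ with a fixed-aperture cone $\mathcal{C}^*(p)=\mathcal{C}^-(y_0(p),\pi/2-\arccos(\eta/2))$. The aperture $\pi/2-\arccos(\eta/2)$ is a strictly positive constant depending only on $\eta$ (since $\eta\in(0,1]$ gives $\arccos(\eta/2)<\pi/2$), so this is a circular cone of solid angle bounded below by a constant $\omega_N(\eta)>0$ depending only on $N$ and $\eta$; the volume of its intersection with a ball of radius $r$ is $\omega_N(\eta)\,r^N/N$ by scaling. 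With $r=1/|p|$ this yields $|\mathcal{A}(p)|\geq c(N,\eta)/|p|^N$, which is the second claim. The role of the cone $\mathcal{C}^*(p)$ in the later arguments is that it is essentially aligned with $-y_0(p)$, hence (by the angle lemma, Lemma~\ref{lemma:unif.eta}, and the liminf bound $p\cdot y_0/(|p||y_0|)\geq\eta$) with $-p$, so that on $\mathcal{A}(p)$ one has $p\cdot y$ large and controlled — but proving that directional property is separate from the present statement, which only records the two displayed inequalities.

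The main obstacle is the Taylor-remainder control: one must argue carefully that, although $D^2(|y|\omega(y))$ may well blow up as $|y|\to\infty$, it does so only polynomially in $|y|$, while $|z|^2\leq 1/|p|^2$ and $|p|$ is comparable to $|D(|y|\omega)(y_0)|$, which by superlinearity grows at least linearly in $|y_0|$; so the product tends to zero. This requires making precise the growth of the Hessian of $|y|\omega(y)$ from the $\mathrm{C}^1$-bound on $J$ in \eqref{eq:hyp:yw}, the cleanest route being to differentiate the identity $D(|y|\omega(y))=DJ(y)/J(y)$ once more and estimate, or simply to note that for the \emph{lower} bound $\psi_p(y_0+z)\geq\psi_p(y_0)-1$ it is enough to bound $\psi_p$ from below by its concave quadratic model, which is legitimate precisely in the region $|y|>C$ where $|y|\omega(y)$ is convex — and $y_0(p)$ and all of $\mathcal{A}(p)$ lie in that region once $|p|$ is large, by Lemma~\ref{lemma:maximum}. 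So in fact the remainder estimate can be bypassed: on $\{|y|>C\}$, concavity of $\psi_p$ gives $\psi_p(y_0+z)\geq \psi_p(y_0)+D\psi_p(y_0)\cdot z - \tfrac12 M_C|z|^2 = \psi_p(y_0)-\tfrac12 M_C|z|^2$ with $M_C$ depending only on $C$ and a modulus of convexity, and then $M_C/(2|p|^2)\leq 1$ for $|p|$ large, finishing the argument.
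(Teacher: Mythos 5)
Your volume estimate is fine and matches the paper, but the main inequality has a genuine gap: the second-order Taylor expansion $\psi_p(y_0+z)=\psi_p(y_0)+\tfrac12 z^TD^2\psi_p(\xi)z$ is not available from the hypotheses. Assumption \eqref{eq:hyp:yw} only makes $J$ (hence $|y|\omega(y)=-\ln J(y)$) $\mathrm{C}^1$ away from the origin, and the convexity of $|y|\omega(y)$ bounds its Hessian from \emph{below} (by $0$), never from above — there is no ``modulus of convexity'' giving the constant $M_C$ you invoke, and your concavity inequality is written with the wrong sign anyway (concavity of $\psi_p$ gives $\psi_p(y_0+z)\leq\psi_p(y_0)+D\psi_p(y_0)\cdot z$, an upper bound, whereas you need a lower bound). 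Nor can a polynomial bound on $D^2(|y|\omega)$ be extracted from the boundedness of $|DJ|$: differentiating $D(|y|\omega)=-DJ/J$ produces $-D^2J/J+(DJ\otimes DJ)/J^2$, which involves $D^2J$ (not assumed to exist) and the term $D(|y|\omega)\otimes D(|y|\omega)$, whose norm at $y_0$ is exactly $|p|^2$ — so even the ``good'' part of the remainder is $O(|z|^2|p|^2)=O(1)$, not $o(1)$. Finally, the relation between $|p|$ and $|y_0(p)|$ is not quantified by superlinearity alone, so a hypothetical polynomial bound in $|y_0|$ would not automatically be absorbed by $1/|p|^2$.

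The tell-tale sign is that your argument never uses the cone $\mathcal{C}^*(p)$ for the first inequality; if a Taylor bound held on the whole ball $B(y_0,1/|p|)$, the lemma would not need to restrict $z$ to a cone at all. The paper's proof is purely first-order and this is where the cone is essential: one splits $p\cdot y-|y|\omega(y)$ and shows separately that ($i$) $p\cdot y\geq p\cdot y_0-1$ since $|p||z|\leq1$; ($ii$) $|y|\leq|y_0|$ because $z$ points into the backward cone $\mathcal{C}^-(y_0,\pi/2-\arccos(\eta/2))$, so $\mathcal{A}(p)\subset B_{|y_0|}$ for $|p|$ large; and ($iii$) $\omega(y)-\omega(y_0)=D\omega(\xi)\cdot z\leq0$ by the mean value theorem applied to the $\mathrm{C}^1$ function $\omega$, because Lemma~\ref{lemma:unif.eta} and the angle expansion $\ang(\xi,y_0)=o(1/|p|^{1/2})$ place $D\omega(\xi)$ within angle $\arccos(\eta/2)$ of $y_0$, while $z$ is within angle $\pi/2-\arccos(\eta/2)$ of $-y_0$, so $\ang(D\omega(\xi),z)\geq\pi/2$. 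Points ($i$)--($iii$) combine to give $p\cdot y-|y|\omega(y)\geq p\cdot y_0-|y_0|\omega(y_0)-1$ using only the stated regularity. You should replace the Taylor argument by this monotonicity-plus-angle argument.
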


\begin{proof}
	In all the proof we consider at least $|p|>M$ so that we may apply Lemma \ref{lemma:unif.eta} and
	take $y\in\mathcal{A}(p)$. Then $y\in B(y_0,\frac{1}{|p|})$ so that we have
	$\pe{p}{y}\geq\pe{p}{y_0}-1$.

	Moreover, since $0<\eta\leq1$,  the aperture of the cone $\mathcal{C}^*(p)$,
	$\arccos(\eta/2)$, is strictly positive and not equal to $\pi/2$. Hence for $|p|$ big enough,
	$\mathcal{A}(p)\subset B_{|y_0|}$ so that $|y|\leq |y_0|$ and it is enough to check that
	$\omega(y)\leq \omega(y_0)$ to get what we want, that is:
	$$\pe{p}{y}-|y|\omega(y)\geq \pe{p}{y_0}-|y_0|\omega(y_0)-1\,.$$

  	To this end, we write
  $$
  \omega(y)-\omega(y_0)=D\omega(\xi)\cdot(y-y_0)
  $$
  for some $\xi\in [y,y_0]$. The point is that, unless we are in a symmetric case,
  we do not have a very precise estimate for
  $D\omega(\xi)$. But we will prove that if $y\in\mathcal{C}^*(p)$ then necessarily
  \begin{equation}
    \label{eq:caja1}
      D\omega(\xi)\cdot(y-y_0)\leq 0\,,
  \end{equation}
	which will be enough to get the desired estimate.

\begin{figure}
  [ht]
  \begin{center}
    \includegraphics[width=6cm]{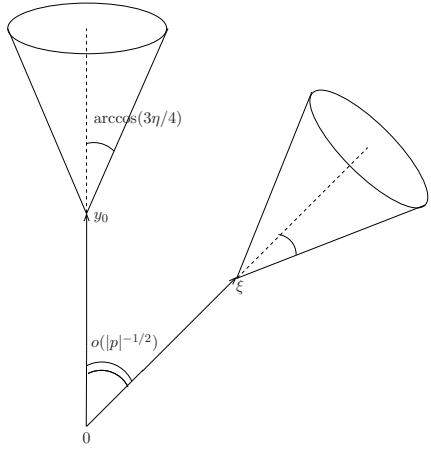}
    \caption{Cones in $y_0$ and $\xi$.}
    \label{fig:conos1}
  \end{center}
\end{figure}
    We shall first show that the image set $D\omega\big(B_{1/|p|}(y_0(p))\big)$ si contained in
	a cone of aperture comparable to $\arccos(3\eta/4)$, in the direction $y_0$. More precisely,
	we claim that for any $\xi\in B(y_0,1/|p|)$,
	\[
 		0\leq\ang (D\omega(\xi),y_0)\leq\arccos(3\eta/4)+o(1/|p|^{1/2})\,,
	\]
	see Figure~\ref{fig:conos1}. Indeed, we first estimate the angle $\ang(y_0,\xi)$ for any $\xi=y_0+z\in B(y_0,1/|p|)$ as follows:
	\begin{eqnarray*}
        \frac{\pe{y_0}{\xi}}{|y_0||\xi|}&=&\frac{\pe{y_0}{y_0}}{|y_0|(|y_0|+O(1/|p|))}+
		\frac{\pe{y_0}{z}}{|y_0|(|y_0|+O(1/|p|))}\\
        &=&1-o(1/|p|)
\end{eqnarray*}
	(recall that $|y_0(p)|\to\infty$), where the term $o(1/|p|)$ is nonnegative.
	The expansion,  as~$x\to0^-$, $\arccos(1+x)=O(x^{1/2})$   implies
	\[
		\ang(y_0,\xi)=o(1/|p|^{1/2})
	\]
	and finally, we use the fact that
  $$
 	\ang(D\omega(\xi),y)\leq \ang(D\omega(\xi),\xi)+\ang(\xi,y_0)
  \leq \arccos(3\eta/4)+o(1/|p|^{1/2})\,.
  $$

    Thus, for $|p|$ large enough (recall that the arccos function
	is strictly decreasing on $(0,1)$),
	$$
 	\ang(D\omega(\xi),y_0)\leq \arccos(\eta/2)\,.
  	$$
	Figure~\ref{fig:conos} shows the vectorial cone of aperture $\arccos(3\eta/4)$ in the direction $\xi$ which is included in the vectorial cone of aperture $\arccos(\eta/2)$ in the direction $y_0$. Hence, if we choose a point $y=y_0+z$ such that $z\in\mathcal{C}^-(y_0,\pi/2-\arccos(\eta/2))$ we make sure
	that $\ang(D\omega(\xi),z)\geq\pi/2$, which yields
	$$D\omega(\xi)\cdot z=D\omega(\xi)\cdot(y-y_0)\leq0\,.$$

\begin{figure}
  [ht]
  \begin{center}
    \includegraphics[width=8cm]{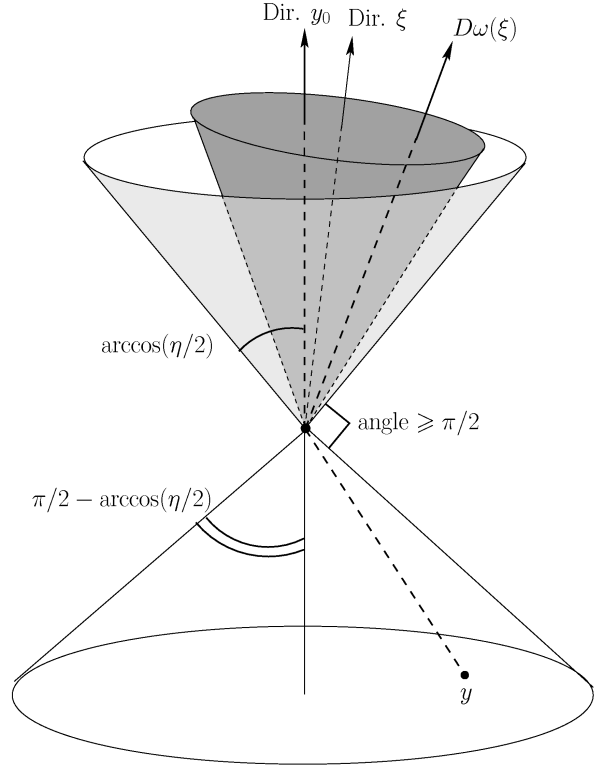}
    \caption{Position of the vectorial cones.}
    \label{fig:conos}
  \end{center}
\end{figure}

    To end the lemma, we only need to mention that $\mathcal{A}(p)$ is given by the intersection of the
	ball $B(y_0,1/|p|)$ with a cone placed at $y_0$, of aperture $\pi/2-\arccos(\eta/2)$. Hence its volume is
	indeed given $c(N,\eta)/|p|^N$ for some constant $c(N,\eta)>0$.
\end{proof}

    \begin{remark}{\rm
	As $\eta$ becomes close to zero, the aperture of the cone $\mathcal{C}^*(p)$ becomes very small:
	$\pi/2-\arccos(\eta/2)\sim 0$. So in the limit case $\eta=0$ we would not be able to
	construct a non-empty set $\mathcal{A}(p)$, at least with this method. Now, the size of
	$\mathcal{A}(p)$ gets small as $|p|\to\infty$ since it behaves like $|p|^{-N}$, but nevertheless
	this will be sufficient to get a suitable estimate, since inside this (small) region, we
	will get an exponential behaviour.
}
\end{remark}

\begin{lem}\label{lem:gen:est}
    Let $J(y)=\e^{-|y|w(y)}$ satisfying \eqref{eq:hyp:J} and \eqref{eq:hyp:yw}. Then following estimate holds:
    \begin{equation}\label{eq:gen:liminf}
        \liminf_{|p|\to\infty}\frac{\ln(p\cdot D\Hess(p))}{\mathcal{K}(p)}\geq 1.
    \end{equation}
\end{lem}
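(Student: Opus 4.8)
The plan is to make rigorous the heuristic $p\cdot D\Hess(p)=\int p\cdot y\,\e^{p\cdot y-|y|\omega(y)}\dy\gtrsim\e^{\mathcal K(p)}$ by restricting the integral to the region $\mathcal A(p)$ constructed in Lemma~\ref{lemma:caja}. First I would split $p\cdot D\Hess(p)=\int_{\{p\cdot y\le 0\}\cap\{|y|>\rho_0/2\}}+\int_{\{p\cdot y>0\}\cap\{|y|>\rho_0/2\}}$, bound the first integral from below by $-c(J)|p|$ exactly as in Lemma~\ref{lemma:H:basic} (using \eqref{eq:hyp:J} and the superlinearity of $|y|\omega(y)$, which gives \eqref{eq:hyp2:J}), and keep only the contribution of $\mathcal A(p)$ in the second integral. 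For $|p|$ large, $\mathcal A(p)$ is contained in $\{p\cdot y>0\}$: indeed by the previous lemma $\ang(p,y_0)\le \pi/2-\arccos(\eta/2)+o(1)$, and the cone $\mathcal C^*(p)$ has aperture $\pi/2-\arccos(\eta/2)$, so every $y\in\mathcal A(p)$ makes an angle with $p$ strictly less than $\pi/2$; it also lies outside $B_{\rho_0/2}$ since $|y_0(p)|\to\infty$. Hence
\[
  p\cdot D\Hess(p)\ \geq\ \int_{\mathcal A(p)}(p\cdot y)\,\e^{p\cdot y-|y|\omega(y)}\dy\ -\ c(J)|p|.
\]

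Next, on $\mathcal A(p)$ I would use the two estimates provided by Lemma~\ref{lemma:caja}: the pointwise bound $p\cdot y-|y|\omega(y)\geq p\cdot y_0-|y_0|\omega(y_0)-1=\mathcal K(p)-1$, and the volume bound $|\mathcal A(p)|\geq c(N,\eta)/|p|^N$. For the prefactor $p\cdot y$ I would note that on $\mathcal A(p)$ one has $p\cdot y\geq p\cdot y_0-1\geq \eta|p||y_0|-1$ by the angle lemma, and $|y_0(p)|\to\infty$, so in particular $p\cdot y\geq 1$ for $|p|$ large. Combining,
\[
  p\cdot D\Hess(p)\ \geq\ 1\cdot\e^{\mathcal K(p)-1}\cdot\frac{c(N,\eta)}{|p|^N}\ -\ c(J)|p|
  \ =\ \frac{c(N,\eta)}{\e\,|p|^N}\,\e^{\mathcal K(p)}-c(J)|p|.
\]
Since $\mathcal K$ is superlinear (Legendre transform of a superlinear function, see \eqref{eq:K}), $\e^{\mathcal K(p)}$ dominates any power of $|p|$, so the right-hand side is positive and behaves like $c(N,\eta)\e^{-1}|p|^{-N}\e^{\mathcal K(p)}$ for $|p|$ large.

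Finally I would take logarithms: $\ln\big(p\cdot D\Hess(p)\big)\geq \mathcal K(p)-N\ln|p|-1+\ln c(N,\eta)+o(1)$, divide by $\mathcal K(p)$, and use $\mathcal K(p)/\ln|p|\to\infty$ (again superlinearity of $\mathcal K$) to kill the $N\ln|p|/\mathcal K(p)$ term; taking the liminf gives \eqref{eq:gen:liminf}. The main obstacle — which is really handled already in Lemma~\ref{lemma:caja} — is controlling the geometry when $J$ is not symmetric: one must know that $\mathcal A(p)$ is nonempty with quantitative volume and sits on the side where $p\cdot y>0$, and this is exactly where the angle hypothesis in \eqref{eq:hyp:yw} (the constant $\eta$) enters; the rest is the superlinearity of $\mathcal K$ swallowing all polynomial losses.
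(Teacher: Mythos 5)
Your proposal is correct and follows essentially the same route as the paper's proof: the same splitting into $\{p\cdot y\le 0\}$ and $\{p\cdot y>0\}$, restriction of the positive part to the box $\mathcal{A}(p)$ from Lemma~\ref{lemma:caja} with its pointwise bound $p\cdot y-|y|\omega(y)\geq\mathcal{K}(p)-1$ and volume bound $c(N,\eta)|p|^{-N}$, and the final logarithm step using the superlinearity of $\mathcal{K}$. The only cosmetic difference is that you bound the prefactor $p\cdot y$ below by $1$ where the paper keeps $\eta|p|\,|y|$; both are polynomial factors absorbed in the same way.
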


\begin{proof}
	We start from
	\begin{equation*}
		\begin{aligned}
        p\cdot D\Hess(p)&=\int_{\{|y|>\rho_0/2\}}p\cdot y\e^{p\cdot y-|y|w(y)}\dy\\
        &=
	   \int_{\{p\cdot y\leq 0\}\cap\{|y|>\rho_0/2\}}\big(\dots\big)\dy + 
		\int_{\{p\cdot y>0\}\cap\{|y|>\rho_0/2\}}\big(\dots\big)\dy\,, 
		\end{aligned}
    \end{equation*}
	and the integral over $\{p\cdot y\leq 0\}\cap\{|y|>\rho_0/2\}$ is bounded by $C|p|$ for some $C>0$,
	so that it will be negligible
	(recall that $p\cdot D\Hess(p)$ behaves at least exponentially -- see Lemma~\ref{lemma:H:basic}).
	Thus we shall give an estimate of $p\cdot D\Hess(p)$ only in terms of the integral over
	$\{p\cdot y>0\}\cap\{|y|>\rho_0/2\}$:
	for $|p|$ large enough, we have
	$$
	p\cdot D\Hess(p)\geq \frac{1}{2}\int_{\{p\cdot y>0\}\cap\{|y|>\rho_0/2\}} p\cdot y \e^{p\cdot y-|y|\omega(y)}\dy\\
	$$
	Now let us notice that, for $|p|$ big enough, $\mathcal{A}(p)\subset\{p\cdot y>0\}\cap\{|y|>\rho_0/2\}$.
	So we can write, using
	Lemma~\ref{lemma:caja} and \eqref{eq:hyp:yw}:
 	\begin{eqnarray*}
	p\cdot D\Hess(p)&\geq& \frac{1}{2}\int_{\mathcal{A}(p)} p\cdot y \e^{p\cdot y-|y|\omega(y)}\dy\\
	&\geq& \frac{1}{2}\eta|p|\int_{\mathcal{A}(p)}|y|\e^{\mathcal{K}(p)-1}\dy\\
	&\geq& \frac{1}{2}\eta|p||\mathcal{A}(p)|\Big(|y_0(p)|-\frac{1}{|p|}\Big)\e^{\mathcal{K}(p)-1}
	\end{eqnarray*}
	Hence, taking logarithms and dividing by $\mathcal{K}(p)$:
 	\begin{eqnarray*}
	\frac{\ln\big(p\cdot D\Hess(p)\big)}{\mathcal{K}(p)}&\geq& \frac{\ln(\eta|p|/2)}{\mathcal{K}(p)}+
		\frac{\ln|\mathcal{A}(p)|}{\mathcal{K}(p)}+\frac{\ln(|y_0(p)|-1/|p|)}{\mathcal{K}(p)}+1-\frac{1}{\mathcal{K}(p)}\,.
	\end{eqnarray*}
	Recall that $\mathcal{K}(p)$ is superlinear, $|y_0(p)|\to\infty$ and $|\mathcal{A}(p)|\geq c|p|^{-N}$ to conclude that
	$$
        \liminf_{|p|\to\infty}\frac{\ln(p\cdot D\Hess(p))}{\mathcal{K}(p)}\geq 1.
    $$
\end{proof}

We assume now that $J_*(y)=\e^{-|y|\omega_*(y)}$ is symmetric in order to get a more precise estimate. We denote by
$L_*$ and $\mathcal{K}_*$ the associated Lagrangian and Legendre transform of $|y|\omega_*(y)$.
Since in this case
$\mathcal{K}_*$ is symmetric (and still superlinear), we know that for $|p|$ large enough,
$\mathcal{K}_*^{-1}:\R_+\to\R^N$ is defined and for any $p\in\mathcal{K}_*^{-1}(|z|)$, we have
$|p|=$constant.

\begin{proposition}
	Let us assume that $J_*$ is a symmetric kernel satisfying \eqref{eq:hyp:J} and \eqref{eq:hyp:yw}. Then
	\begin{equation}\label{est:gen:L}
	\liminf_{|q|\to\infty}\frac{L_*(q)}{|q||\mathcal{K_*}^{-1}(\ln|q|)|}\geq1 \,.
	\end{equation}
\end{proposition}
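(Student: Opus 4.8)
The plan is to chain together the estimates already established in the previous lemmas. Starting from Lemma~\ref{lemma:H:singular}, we know $L_*(q)\sim\Less_*(q)$ as $|q|\to\infty$, and from Lemma~\ref{lemma:H:less:pDHp} that $\Less_*(q)\sim p_0(q)\cdot D\Hess_*(p_0(q))$, where $p_0=p_0(q)$ is the point realizing the supremum in the definition of $\Less_*$, so that $q=D\Hess_*(p_0)$. Moreover, by Lemma~\ref{lemma:H:basic}, $|p_0(q)|\to\infty$ as $|q|\to\infty$. The core nonlocal input is Lemma~\ref{lem:gen:est}, which gives
\[
\liminf_{|p|\to\infty}\frac{\ln\big(p\cdot D\Hess_*(p)\big)}{\mathcal{K}_*(p)}\geq1\,,
\]
so combining these we obtain $\ln L_*(q)\gtrsim\mathcal{K}_*(p_0(q))$, i.e. $L_*(q)\geq\e^{\mathcal{K}_*(p_0(q))(1+o(1))}$.

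The next step is to relate $\mathcal{K}_*(p_0(q))$ and $|p_0(q)|$ to the quantities appearing in the statement, namely $|q|$ and $|\mathcal{K}_*^{-1}(\ln|q|)|$. First I would take logarithms of the inequality $L_*(q)\geq\e^{\mathcal{K}_*(p_0)(1+o(1))}$ to get $\ln L_*(q)\geq\mathcal{K}_*(p_0)(1+o(1))$, so that $\mathcal{K}_*(p_0(q))\leq\ln L_*(q)\,(1+o(1))$. Since $J_*$ is symmetric, $\mathcal{K}_*$ is radial and strictly increasing in $|p|$ (being superlinear and convex with minimum at $0$), hence $\mathcal{K}_*^{-1}$ is well-defined on $\R_+$ for large arguments and radially-valued with $|\mathcal{K}_*^{-1}(s)|$ increasing in $s$. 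Applying $\mathcal{K}_*^{-1}$ to $\mathcal{K}_*(p_0)\lesssim\ln L_*(q)$ gives
\[
|p_0(q)|=|\mathcal{K}_*^{-1}(\mathcal{K}_*(p_0(q)))|\leq|\mathcal{K}_*^{-1}(\ln L_*(q)\,(1+o(1)))|\,.
\]
On the other hand, from $q=D\Hess_*(p_0)$ and $L_*(q)\sim p_0\cdot D\Hess_*(p_0)=p_0\cdot q\leq|p_0||q|$, we get $|q|\geq L_*(q)/|p_0(q)|\,(1+o(1))$, so that $\ln|q|\geq\ln L_*(q)-\ln|p_0(q)|+o(1)$. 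Since $\ln|p_0(q)|$ is negligible compared to $\ln L_*(q)$ (because $L_*(q)$ grows at least like $\exp(\mathcal{K}_*(p_0))$, which by superlinearity of $\mathcal{K}_*$ dominates any power of $|p_0|$), we conclude $\ln|q|=\ln L_*(q)\,(1+o(1))$, equivalently $\ln L_*(q)=\ln|q|\,(1+o(1))$.

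Plugging this back, $\ln L_*(q)\,(1+o(1))=\ln|q|\,(1+o(1))$, so that $\mathcal{K}_*(p_0(q))\leq\ln|q|\,(1+o(1))$ and hence, using monotonicity of $|\mathcal{K}_*^{-1}|$,
\[
|p_0(q)|\leq|\mathcal{K}_*^{-1}(\ln|q|\,(1+o(1)))|=|\mathcal{K}_*^{-1}(\ln|q|)|\,(1+o(1))\,,
\]
where the last equality uses that $\mathcal{K}_*^{-1}$ of $\ln|q|\,(1+o(1))$ and of $\ln|q|$ differ only by a factor $1+o(1)$ in norm — this requires a mild regularity/doubling property of $|\mathcal{K}_*^{-1}|$, which I expect to be the main technical obstacle and the one point needing care, since $\mathcal{K}_*^{-1}$ could a priori grow very slowly (e.g.\ logarithmically). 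I would handle it by noting that $|\mathcal{K}_*^{-1}(\cdot)|$ is concave-like/increasing and that replacing $s$ by $s(1+o(1))$ changes $|\mathcal{K}_*^{-1}(s)|$ by a multiplicative $1+o(1)$; if this fails one instead absorbs the discrepancy into the $(1+o(1))$ in the final $\liminf$. Finally, combining $L_*(q)\geq\e^{\mathcal{K}_*(p_0)(1+o(1))}$ with the relation $q=D\Hess_*(p_0)$ and the symmetric structure — writing $L_*(q)\sim p_0\cdot q$ and estimating $p_0\cdot q$ from below using that $p_0$ and $q=D\Hess_*(p_0)$ point in nearly the same direction in the symmetric case, so $p_0\cdot q\sim|p_0||q|$ — yields $L_*(q)\geq|q||p_0(q)|\,(1+o(1))$, and since for the \emph{reverse} direction one needs a lower bound on $|p_0(q)|$ in terms of $|\mathcal{K}_*^{-1}(\ln|q|)|$, I would re-run the argument: from $\ln L_*(q)=\ln|q|(1+o(1))$ and $\ln L_*(q)\geq\mathcal{K}_*(p_0)(1+o(1))$ one gets nothing for the lower bound directly, so instead I use that $L_*(q)=p_0\cdot q-\Hess_*(p_0)$ with $\Hess_*$ negligible (Lemma~\ref{lemma:H:less:pDHp}), giving $L_*(q)=p_0\cdot q(1+o(1))\geq|p_0||q|\cos(\ang(p_0,q))$, and the symmetric Lemma from \cite{BrandleChasseigne08-1} forces the angle to zero; hence
\[
\frac{L_*(q)}{|q||\mathcal{K}_*^{-1}(\ln|q|)|}\geq\frac{|p_0(q)|}{|\mathcal{K}_*^{-1}(\ln|q|)|}(1+o(1))\,,
\]
and it remains to show $|p_0(q)|\geq|\mathcal{K}_*^{-1}(\ln|q|)|(1+o(1))$, which follows from $\mathcal{K}_*(p_0(q))\geq\ln L_*(q)(1+o(1))=\ln|q|(1+o(1))$ — this inequality being exactly the content of Lemma~\ref{lem:gen:est} read with a matching upper bound on $p\cdot D\Hess_*(p)$, namely that $\ln(p\cdot D\Hess_*(p))\leq\mathcal{K}_*(p)(1+o(1))$ as well, which holds because $p\cdot D\Hess_*(p)=\int(p\cdot y)\e^{p\cdot y-|y|\omega_*(y)}\dy$ and the exponent is bounded above by $\mathcal{K}_*(p)$ pointwise up to polynomial factors — and then applying the increasing function $|\mathcal{K}_*^{-1}|$ to both sides. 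Taking $\liminf_{|q|\to\infty}$ gives the claimed bound.
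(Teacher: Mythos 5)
Your overall strategy is the paper's: symmetry makes $D\Hess_*(p)$ parallel to $p$, so $L_*(q)\sim\Less_*(q)\sim p_0\cdot D\Hess_*(p_0)=|p_0(q)|\,|q|$, and everything reduces to comparing $|p_0(q)|$ with $|\mathcal{K}_*^{-1}(\ln|q|)|$. You are also right --- and it is worth your having made this explicit, since the paper's own write-up passes over it quickly --- that Lemma~\ref{lem:gen:est} by itself only gives $\ln|q|\geq(1-\eps)\mathcal{K}_*(p_0(q))$, hence an \emph{upper} bound $|p_0(q)|\leq|\mathcal{K}_*^{-1}((1-\eps)^{-1}\ln|q|)|$ and therefore an upper bound on $L_*(q)$; the stated $\liminf$ needs the reverse control $\mathcal{K}_*(p_0(q))\geq(1-o(1))\ln|q|$, i.e.\ the matching upper bound $\ln\big(p\cdot D\Hess_*(p)\big)\leq(1+o(1))\mathcal{K}_*(p)$. (Your secondary worry about regularity of $\mathcal{K}_*^{-1}$ is harmless: $s\mapsto|\mathcal{K}_*^{-1}(s)|$ is concave and vanishes at $0$, so $|\mathcal{K}_*^{-1}((1-\eps)s)|\geq(1-\eps)|\mathcal{K}_*^{-1}(s)|$, and the $\eps$ can be pulled out of $\mathcal{K}_*^{-1}$ before taking $\liminf$.)

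The genuine gap is your justification of that upper bound. Bounding the exponent pointwise by $\mathcal{K}_*(p)$ gives $\int_{\{p\cdot y>0\}}(p\cdot y)\,\e^{p\cdot y-|y|\omega_*(y)}\dy\leq \e^{\mathcal{K}_*(p)}\int (p\cdot y)_+\dy$, whose right-hand side is infinite: the pointwise bound destroys integrability, and no ``polynomial factor'' rescues it. The standard repair is to give up a fraction of the decay, writing $p\cdot y-|y|\omega_*(y)\leq(1-\delta)\,\mathcal{K}_*\big(p/(1-\delta)\big)-\delta|y|\omega_*(y)$, which yields $p\cdot D\Hess_*(p)\leq C_\delta\,|p|\exp\big((1-\delta)\mathcal{K}_*(p/(1-\delta))\big)$ with $C_\delta=\int|y|\e^{-\delta|y|\omega_*(y)}\dy<\infty$. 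One must then show that $(1-\delta)\mathcal{K}_*(p/(1-\delta))=(1+o(1))\mathcal{K}_*(p)$ for a suitable choice $\delta=\delta(|p|)\to0$, while keeping $\ln C_\delta$ of lower order than $\mathcal{K}_*(p)$. This is not automatic: for $J_*=\e^{-|y|\ln|y|}$ one has $\mathcal{K}_*(p)\approx\e^{|p|}$, so $\mathcal{K}_*(p/(1-\delta))\approx\mathcal{K}_*(p)^{1/(1-\delta)}$, forcing $\delta$ to be taken of order $o(1/|p|)$ and requiring one to track the resulting growth of $C_\delta$. Until this step is carried out, the lower bound on $|p_0(q)|$, and hence the proposition, is not established by your argument.
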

\begin{proof}
	In this proof, we drop the $*$-subscript for simplicity.
	We use estimate \eqref{eq:gen:liminf} and, since $J$ is symmetric,
	$\Hess(\cdot)$ is also symmetric. Hence $p_0\cdot D\Hess(p_0)=|p_0||D\Hess(p_0)|$ so that,
	using that $\mathcal{K}(\cdot)$ is superlinear:
	$$\liminf_{|q|\to\infty}\frac{\ln |p_0|+\ln |D\Hess(p_0)|}{\mathcal{K}(p_0)}=
		\liminf_{|q|\to\infty}\frac{\ln |D\Hess(p_0)|}{\mathcal{K}(p_0)}\geq1\,.$$
	In other words, this means that for any $\eps\in(0,1)$, if $|q|$ is large enough,
	$$\ln|q|\geq (1-\eps)\mathcal{K}(p_0(q))\,.$$
	Using again that $\mathcal{K}$ is superlinear and symmetric, we have that for any $\eps\in(0,1)$, it is enough
	to take $|q|$ big to get
	$$ |p_0(q)|\leq|\mathcal{K}^{-1}((1-\eps)^{-1}\ln|q|)|\,.$$
	From this we get that for any $\eps\in(0,1)$:
	$$\liminf_{|q|\to\infty}\frac{p_0\cdot D\Hess(p_0)}{|q||\mathcal{K}^{-1}((1-\eps)^{-1}\ln|q|)|}\geq1,$$
	and we pass to the limit as $\eps\to0$ to get the result for $\Less(q)\sim p_0\cdot D\Hess(p_0)$:
	$$\liminf_{|q|\to\infty}\frac{\Less(q)}{|q||\mathcal{K}^{-1}(\ln|q|)|}\geq1.$$
	Finally, we invoke Lemma \ref{lemma:H:singular} to conclude that the result holds for $L(q)$.
\end{proof}

\subsection{Conclusion}

We are now ready to prove one of the main result of this paper:

\begin{theorem}
  \label{teo:main}
	Let $J(y)=\e^{-|y|w(y)}$ be a kernel satisfying \eqref{eq:hyp:J} and~\eqref{eq:hyp:yw}. Let us consider
	a symmetric kernel $J_*(y)=\e^{-|y|\omega_*(y)}$ such that $J\curlyeqprec J_*$, and denote by
	$\mathcal{K}_*$ the associated Legendre transform of $|y|\omega_*(y)$.
	Then the following estimate holds as $R\to\infty$: for any $\theta\in(0,1)$, $T>0$,
  	$$
  	\sup_{|x|\leq\theta R\atop 0\leq t\leq TR} |u-u_R|(x,t)\leq \e^{-(1-\theta)R\, \mathcal{K}_*^{-1}(\ln ((1-\theta)R/t)) (1+o(1))}.
  	$$
\end{theorem}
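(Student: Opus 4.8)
The plan is to feed the asymptotic lower bound for the symmetric Lagrangian $L_*$ obtained in the previous Proposition into the theoretical estimate of Theorem~\ref{thm:est-IR}, reducing to the symmetric kernel $J_*$ by the comparison of Lemma~\ref{lemma:reduce:symmetric}. First I would invoke Theorem~\ref{thm:est-IR}: for $|x|\leq\theta R$ and $0\leq t\leq TR$,
\[
|u-u_R|(x,t)\leq\exp\Big(-R\,I_\infty(x/R,t/R)+o(1)R\Big),\qquad
R\,I_\infty(x/R,t/R)=\min_{y\in\partial B_R}t\,L\Big(\frac{x-y}{t}\Big),
\]
the second identity being just~\eqref{est:IR} after undoing the $(x,t)\mapsto(x/R,t/R)$ rescaling, exactly as at the start of the proof of Theorem~\ref{theo:compact:support}. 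Since $J\curlyeqprec J_*$, Lemma~\ref{lemma:reduce:symmetric} gives $L(q)\geq L_*(q)$ for $|q|$ large, and $|x|\leq\theta R$ forces $|x-y|\geq(1-\theta)R$ on $\partial B_R$; hence in the natural regime $R/t\to\infty$ all the arguments $(x-y)/t$ occurring in the minimum are large, so that
\[
R\,I_\infty(x/R,t/R)\ \geq\ \min_{y\in\partial B_R}t\,L_*\Big(\frac{x-y}{t}\Big).
\]

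Next I would insert~\eqref{est:gen:L}, i.e. $L_*(q)\geq|q|\,\big|\mathcal{K}_*^{-1}(\ln|q|)\big|\,(1+o(1))$ as $|q|\to\infty$ (the $o(1)$ being uniform), which with $r=|x-y|$ turns the minimum into a lower bound for $\min_r r\,\big|\mathcal{K}_*^{-1}(\ln(r/t))\big|\,(1+o(1))$ over $r\in[R-|x|,R+|x|]$. Here $r\mapsto r$ is increasing and, because $\mathcal{K}_*$ is symmetric, convex and superlinear, $r\mapsto\big|\mathcal{K}_*^{-1}(\ln(r/t))\big|$ is increasing on the relevant range; so the minimum is attained at the left endpoint $r=R-|x|\geq(1-\theta)R$, giving
\[
R\,I_\infty(x/R,t/R)\ \geq\ (1-\theta)R\,\big|\mathcal{K}_*^{-1}\big(\ln((1-\theta)R/t)\big)\big|\,(1+o(1)).
\]
Plugging this into the exponential bound and absorbing the additive error $o(1)R$ into $(1+o(1))$ times the leading term (which is $\geq c(1-\theta)R$ as soon as $\ln((1-\theta)R/t)$ stays bounded away from $0$) yields the claimed estimate.

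The main obstacle is entirely one of uniformity. The $o(1)$ in~\eqref{est:gen:L} has to be controlled uniformly over the whole family of arguments $(x-y)/t$ arising when $(x,t)$ ranges over $\{|x|\leq\theta R\}\times[0,TR]$ and $y$ over $\partial B_R$: for $t=o(R)$ these all tend to infinity with $R$ and there is no problem, but one must isolate the borderline regime in which $t$ is comparable to $R$, where $(x-y)/t$ stays bounded, $\mathcal{K}_*^{-1}(\ln((1-\theta)R/t))$ is bounded, and the comparison $L\geq L_*$ is not available; there the assertion reduces to the elementary remark that $t\,L_*((x-y)/t)\geq cR$ for some $c>0$ since $L_*$ is strictly positive off the origin, the resulting bound being weaker but still of the announced form with the $o(1)$ swallowing the constants. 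One must also check that the monotonicity of $r\mapsto r\,\big|\mathcal{K}_*^{-1}(\ln(r/t))\big|$ genuinely holds on the range used (it degenerates near $r=t$, where $\mathcal{K}_*^{-1}$ may be multivalued, but this again only concerns $t$ comparable to $R$), and that $|x-y|$ attains its minimum $R-|x|$ over $\partial B_R$. None of these steps is deep, but stitching them into a single estimate uniform in $(x,t)$ is where the work lies.
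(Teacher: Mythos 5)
Your proposal is correct and follows essentially the same route as the paper: invoke Theorem~\ref{thm:est-IR}, reduce to the symmetric majorant $J_*$ via Lemma~\ref{lemma:reduce:symmetric}, note that $|x|\leq\theta R$ forces the arguments of $L_*$ in the minimum to have modulus at least $(1-\theta)R/t$, and conclude with~\eqref{est:gen:L}. Your extra discussion of uniformity and of the borderline regime $t\sim R$ (where $(x-y)/t$ stays bounded and one falls back on $tL_*\geq cR$) is a point the paper passes over silently, so it is a welcome refinement rather than a deviation.
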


\begin{proof}The proof essentially follows the one in the case of compactly supported kernels:
	we first use Lemma \ref{lemma:reduce:symmetric} to reduce our estimate to the case
	of a symmetric kernel $J\curlyeqprec J_*$:
	\begin{eqnarray*}
		-RI_\infty(x/R,t/R)&=&-R\min_{y\in \partial B_1}(t/R)L\Big(\frac{x/R-y}{t/R}\Big)\\
		&\leq&-R\min_{y\in\partial B_1}(t/R)L_*\Big(\frac{x/R-y}{t/R}\Big)\\
		&\leq&-t\,\underline{L}_*\Big(\frac{\dist(x;\partial B_R)}{t}\Big)\,,\\
	\end{eqnarray*}
	where $\underline{L}_*(|x|)=L_*(x)$ is the symmetric Lagrangian associated to $J_*$.

	Now we assume that $|x|<\theta R$ so that in this set $\dist(x;\partial B_R)\geq(1-\theta)R\to\infty$
	and we use the behaviour at infinity of $L_*$ given by \eqref{est:gen:L} to get the result.
	
\end{proof}

\begin{corollary}In particular, if $x$ remains in a bounded set $B_M$, we can take any $\theta>0$ and if $t$ remains also
bounded we obtain a simpler estimate:
$$
	\sup_{|x|<M,0<t<1} |u-u_R|(x,t)\leq \e^{-R\, \mathcal{K}_*^{-1}(\ln R) (1+o(1))}.
$$
\end{corollary}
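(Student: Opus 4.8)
The plan is to re-examine the short proof of Theorem~\ref{teo:main} in the present regime $|x|<M$, $0<t<1$ and then collapse the auxiliary parameters $\theta$, $t$, $R$ into a single $(1+o(1))$. First I would note that fixing $M$ makes the cutoff $\theta$ superfluous: for $R>M$ one has $\dist(x;\partial B_R)=R-|x|\ge R-M=R(1+o(1))$, so in the chain of inequalities of Theorem~\ref{teo:main} — which uses Lemma~\ref{lemma:reduce:symmetric} to pass to the symmetric comparison kernel $J_*$, whose Lagrangian satisfies $L\ge L_*$ on the (large) arguments involved, and then the symmetry of $L_*$ — one gets directly
\[
-R\,I_\infty(x/R,t/R)\ \le\ -\,t\,\underline L_*\!\Big(\tfrac{R-M}{t}\Big),\qquad\underline L_*(|x|):=L_*(x).
\]
One could instead apply Theorem~\ref{teo:main} with $\theta=\theta_R\to0$ chosen slowly and a diagonal argument, but working with $\dist(x;\partial B_R)\ge R-M$ is cleaner.

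Next I would feed in the asymptotics of $\underline L_*$. Since $0<t<1$ the argument $(R-M)/t\to\infty$ uniformly in $t$ as $R\to\infty$, so estimate~\eqref{est:gen:L} applies: writing $\mathcal K_*^{-1}(s)$ for the common value $|p|$ on the level set $\{\mathcal K_*=s\}$, it reads $\underline L_*(r)\ge r\,\mathcal K_*^{-1}(\ln r)\,(1+o(1))$ as $r\to\infty$, hence
\[
-R\,I_\infty(x/R,t/R)\ \le\ -\,(R-M)\,\mathcal K_*^{-1}\!\Big(\ln\tfrac{R-M}{t}\Big)(1+o(1)).
\]
Because $0<t<1$ we have $(R-M)/t>R-M$, and $\mathcal K_*^{-1}$ is nondecreasing (recall $\mathcal K_*$ is convex, symmetric, superlinear and, by Lemma~\ref{lemma:maximum}, nonnegative with $\mathcal K_*(0)=0$), so $\mathcal K_*^{-1}(\ln((R-M)/t))\ge\mathcal K_*^{-1}(\ln(R-M))$ uniformly in $t$; and $\ln(R-M)=\ln R+\ln(1-M/R)=\ln R+o(1)$. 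Using that $\mathcal K_*^{-1}$ is concave, continuous, vanishes at $0$, and satisfies $\mathcal K_*^{-1}(\ln R)\to\infty$, one obtains $\mathcal K_*^{-1}(\ln(R-M))=\mathcal K_*^{-1}(\ln R)+o(1)=\mathcal K_*^{-1}(\ln R)(1+o(1))$. Combining with $R-M=R(1+o(1))$ gives $-R\,I_\infty(x/R,t/R)\le -R\,\mathcal K_*^{-1}(\ln R)(1+o(1))$ uniformly for $|x|<M$, $0<t<1$; since the additive error $o(1)R$ from Theorem~\ref{thm:est-IR} is $o\big(R\,\mathcal K_*^{-1}(\ln R)\big)$, it is absorbed and
\[
\sup_{|x|<M,\ 0<t<1}|u-u_R|(x,t)\ \le\ \e^{-R\,\mathcal K_*^{-1}(\ln R)(1+o(1))},
\]
which is the claim.

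The only delicate point is this last bookkeeping step: one must check that the several a priori independent $(1+o(1))$ factors — coming from $R-M\sim R$, from $\ln(R-M)\sim\ln R$, from the error term in~\eqref{est:gen:L}, and from the additive $o(1)R$ in Theorem~\ref{thm:est-IR} — genuinely merge into a single $(1+o(1))$ multiplying $R\,\mathcal K_*^{-1}(\ln R)$, and that every $o(1)$ is uniform over $t\in(0,1)$ (which holds since $(R-M)/t\ge R-M\to\infty$ uniformly). This works precisely because $\mathcal K_*^{-1}(\ln R)\to\infty$, so any additive or relative perturbation of size $o(1)$ is swallowed. Apart from this, the corollary is nothing more than Theorem~\ref{teo:main} restricted to the stated region.
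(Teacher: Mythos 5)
Your argument is correct and is essentially the specialization of Theorem~\ref{teo:main} that the paper intends: restrict to $|x|<M$, $0<t<1$, use $\dist(x;\partial B_R)\geq R-M$ together with the monotonicity and slow variation of $\mathcal{K}_*^{-1}$ to replace $\mathcal{K}_*^{-1}(\ln((1-\theta)R/t))$ by $\mathcal{K}_*^{-1}(\ln R)(1+o(1))$. Your bookkeeping of the various $(1+o(1))$ factors (all absorbed because $\mathcal{K}_*^{-1}(\ln R)\to\infty$ in the intermediate regime) is exactly the point that makes the corollary immediate, so nothing is missing.
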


Several remarks are to be made:
\begin{enumerate}	

	\item The authors gave in \cite{BrandleChasseigne08-1} some explicit estimates, which consists here more or less
	in expliciting the function $\mathcal{K}(\cdot)$. We refer to Section \ref{subsect:explicit:estimates}
	for a list of known explicit behaviours.
	\item Even if we are able to prove a lower estimate for asymmetric kernels -- Lemma \eqref{lem:gen:est} --
	using $\mathcal{K}(p)$ which is not symmetric in general, we are facing a problem: knowing the behaviour of
	$p\cdot D H(p)$ is not enough to know the behaviour of each of the vectors, unless we make sure that
	they point more or less in the same direction. And this is not clear unless the kernel is ``almost'' symmetric
	because of the exponential behaviour. This is why we compare with the smallest symmetric kernel above $J$ in order
	to have a more explicit behaviour.
	\item Even if we were able to derive a bound taking into account the asymmetry,
	then we would have to study the min in Theorem \ref{thm:est-IR}, which is again not obvious unless we have
	an almost symmetric lagrangian.
	\item However, see Section \ref{subsect:asymetric:1D} for the 1-D case where
	we can deal with asymmetric kernels, since the regions $\{x>0\}$ and $\{x<0\}$ are clearly separated.
\end{enumerate}

%
%
%
%
%


\section{Critical kernels}
\label{sect:critical}
\setcounter{equation}{0}

We assume now that $J$ is symmetric and that
$$J(y)=\e^{-|y|\omega(y)}\,, \text{ where }\lim_{|y|\to\infty} \omega(y)=\ell\,.$$
Hence $J$ satisfies \eqref{eq:hyp3:J} with $\beta_0=\ell$.
We want to show that the estimate remains valid even if $H$ is not finite everywhere.
To this aim, we have to study the Hamilton-Jacobi equation more carefully.
In this case, the domain of definition of the Hamiltonian $H$ is exactly:
$\mathrm{dom}(H)=B_\ell$, that is, $H=+\infty$ outside $B_\ell$.
For simplicity, we will first assume that $\ell=\beta_0=1$,
the adaptations for other values being straightforward. Then we shall give
the general result in Theorem \ref{theorem:critical:kernel}.

\subsection{Hamilton-Jacobi equation with nonfinite hamiltonian}

We study  the equation $u_t+H(Du)=0$, posed in the cylinder $Q:=B_1\times(0,\infty)$
(although most of the results of this section would hold also for more general cylinders). Here, we assume that the hamiltonian $H$ is
infinite in the complement of $B_1$, which is the main difficulty. Following \cite{Barles90}, we begin by constructing a
new equation which is equivalent in the viscosity sense to $u_t+H(Du)=0$, the main interest being that it
allows us to prove comparison and analyze the initial trace of solutions.

On the parabolic boundary
$$\partial_P Q=(\partial B_1\times[0,\infty))\cup (B_1\times\{t=0\}\big)\,,$$
we impose a continuous boundary condition $f$ in the viscous sense.
More precisely, we consider the following problem:
\begin{equation}\label{eq:HJ0}
       \begin{cases}
               u_t+H(Du)=0 & \text{in }B_1\times(0,\infty)\,,\\
               u(x,t)=f(x,t) & \text{on }\partial B_1\times[0,\infty)\,,\\
               u(x,0)=f(x,0) & \text{in }B_1\,.
       \end{cases}
\end{equation}

\begin{definition}
       Given $f\in\mathrm{C}(\partial_P Q)$, we say that an upper semi-continuous function $u$
       is a viscosity subsolution of \eqref{eq:HJ0} if for any smooth function $\varphi$
       such that $u-\varphi$ reaches a maximum at $(x_0,t_0)$ we have:
       \begin{eqnarray*}
               (x_0,t_0)\in Q &\Rightarrow& \varphi_t + H(D\varphi)\leq0\quad \text{at }(x_0,t_0)\\
               (x_0,t_0)\in\partial_P Q &\Rightarrow & \min\Big(\varphi_t + H(D\varphi)\,;\,u-f\Big)\leq0\quad  \text{at }(x_0,t_0)\,.
       \end{eqnarray*}
\end{definition}

The same definition holds with reversed inequalities (and min instead of max) for an upper semi-continuous viscosity
subsolution. And finally:

\begin{definition}
       A locally bounded function $u:Q\to\R$ is a viscosity solution of \eqref{eq:HJ0} if its upper semi-continuous envelope
       is a supersolution and its lower semi-continuous envelope is a subsolution of~\eqref{eq:HJ0}.
\end{definition}

Let us mention that in the case when $H$ is finite everywhere, solutions take on the initial data in a
classical way. But since here some data may not be compatible with the fact that $\mathrm{dom}(H)=B_1$, this
implies that a boundary/initial layer appears, and this is precisely the phenomenon we are facing. In order to
understand this layer, we need first to reinterpret the equation with a new Hamiltonian:

\begin{proposition}
       Subsolutions and supersolutions of \eqref{eq:HJ0} are also subsolutions and supersolutions
       (in the viscous sense) of the equation:
       \begin{equation}\label{eq:modifiedHJ}
       \max\Big(u_t+H(Du)\,;\,|Du|-1\Big)=0\,.
       \end{equation}
       with the same data on the parabolic boundary.
\end{proposition}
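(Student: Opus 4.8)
The plan is to argue directly from the two definitions, treating the subsolution and the supersolution cases in turn, and in each case splitting according to whether the gradient $D\varphi$ of the test function at the contact point lies in $\mathrm{dom}(H)$. The only input I would use is the structural fact recorded just above the statement, namely $\mathrm{dom}(H)=B_1$; concretely, $H(p)<+\infty$ whenever $|p|<1$ and $H(p)=+\infty$ whenever $|p|>1$. (The finiteness for $|p|<1$ is \eqref{eq:hyp3:J} with $\beta_0=\ell=1$, and the non-finiteness for $|p|>1$ follows from a directional lower bound on $\int_{\R^N}\e^{p\cdot y}J(y)\dy$ in the cone around $p/|p|$; I would just cite this here rather than reprove it.) Thus I may use $\mathrm{dom}(H)\subseteq\overline{B_1}$ and $B_1\subseteq\mathrm{dom}(H)$.

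For the subsolution statement I would take a subsolution $u$ of \eqref{eq:HJ0}, a smooth $\varphi$, and a maximum point $(x_0,t_0)$ of $u-\varphi$, and verify the requirements of \eqref{eq:modifiedHJ}. If $(x_0,t_0)\in Q$, the subsolution inequality for \eqref{eq:HJ0} reads $\varphi_t+H(D\varphi)\leq0$ at $(x_0,t_0)$; since the left-hand side is finite this forces $D\varphi(x_0,t_0)\in\mathrm{dom}(H)\subseteq\overline{B_1}$, hence $|D\varphi(x_0,t_0)|-1\leq0$. Both quantities being nonpositive, $\max\big(\varphi_t+H(D\varphi)\,;\,|D\varphi|-1\big)\leq0$ at $(x_0,t_0)$, which is exactly the interior inequality for \eqref{eq:modifiedHJ}. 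If $(x_0,t_0)\in\partial_P Q$, the boundary condition for \eqref{eq:HJ0} gives $\varphi_t+H(D\varphi)\leq0$ or $u\leq f$ there; in the first alternative the same computation shows $\max(\varphi_t+H(D\varphi)\,;\,|D\varphi|-1)\leq0$, so in either case $\min\big(\max(\varphi_t+H(D\varphi);|D\varphi|-1)\,;\,u-f\big)\leq0$, i.e. the boundary condition for \eqref{eq:modifiedHJ} holds.

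For the supersolution statement I would take a supersolution $u$ of \eqref{eq:HJ0}, a smooth $\varphi$, and a minimum point $(x_0,t_0)$ of $u-\varphi$. On the interior I distinguish: if $H(D\varphi(x_0,t_0))<+\infty$, the supersolution inequality $\varphi_t+H(D\varphi)\geq0$ already gives $\max(\varphi_t+H(D\varphi)\,;\,|D\varphi|-1)\geq0$; if $H(D\varphi(x_0,t_0))=+\infty$, then $D\varphi(x_0,t_0)\notin\mathrm{dom}(H)$, and since $B_1\subseteq\mathrm{dom}(H)$ this forces $|D\varphi(x_0,t_0)|\geq1$, so again $\max(\varphi_t+H(D\varphi)\,;\,|D\varphi|-1)\geq0$. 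The boundary point is handled the same way, keeping the alternative $u\geq f$ in play, so $\max\big(\max(\varphi_t+H(D\varphi);|D\varphi|-1)\,;\,u-f\big)\geq0$. This establishes both halves of the proposition.

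I do not expect any genuine obstacle here. Once one adopts the standard convention that a supersolution imposes no condition wherever $H=+\infty$ (so the new constraint $|Du|\leq1$ is automatically satisfied by supersolutions there) and, dually, that a subsolution can never produce a test-gradient outside $\mathrm{dom}(H)$, the proposition is pure bookkeeping. The reason for stating it is that \eqref{eq:modifiedHJ} has a finite, coercive operator, so the doubling-of-variables comparison argument and the initial/boundary layer analysis — the genuinely delicate parts of this section — can be carried out on \eqref{eq:modifiedHJ} rather than on the $+\infty$-valued original equation \eqref{eq:HJ0}.
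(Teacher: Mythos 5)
Your proof is correct and follows essentially the same route as the paper: for subsolutions, finiteness of $H(D\varphi)$ at the contact point forces $|D\varphi|\leq 1$ so both terms in the max are nonpositive, and for supersolutions the max inequality is immediate since $\max(a;b)\geq a$. You merely spell out the boundary/initial cases and the $H=+\infty$ alternative for supersolutions, which the paper dismisses as ``similar''.
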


\begin{proof}
       Let $u$ be a viscosity subsolution and consider a smooth test function $\varphi$ such that $u-\varphi$
       has a maximum at $(x_0,t_0)$. We assume for simplicity that $(x_0,t_0)\in Q$ (the argument being
       similar if it is a boundary point). Then since by definition
       $$\partial_t \varphi(x_0,t_0)+H(D\varphi)(x_0,t_0)\leq0\,,$$
       we have necessarily $H(D\varphi)<+\infty$, so that $|D\varphi|\leq1$ and thus $u$ satisfies (in the viscous sense)
       also the inequation
       $$\max\Big(\partial_t u+H(Du)\,;\,|Du|-1\Big)\leq0\,.$$
       Now if $v$ is a supersolution and $\varphi$ is such that $v-\varphi$ has a minimum at $(x_0,t_0)$,
       then $$\partial_t \varphi(x_0,t_0)+H(D\varphi)(x_0,t_0)\geq0$$
       implies that $$\max\Big(\partial_t \varphi(x_0,t_0)+H(D\varphi)(x_0,t_0)\,;\,|D\varphi(x_0,t_0)|-1\Big)\geq0\,,$$
       which implies that $v$ is a supersolution of \eqref{eq:modifiedHJ}.
\end{proof}

\begin{proposition}\label{prop:comp-HJ}
       Let $u$ be a viscosity subsolution and $v$ be a viscosity supersolution of
       $$\max\Big(u_t+H(Du)\,;\,|Du|-1\Big)=0\,.$$
       with $u\leq v$ on the parabolic boundary $\partial_P Q$. Then $u\leq v$.
\end{proposition}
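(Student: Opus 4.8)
The plan is to establish the comparison principle for the modified equation $\max(u_t+H(Du);|Du|-1)=0$ by a standard doubling-of-variables argument, taking advantage of the fact that the constraint $|Du|\leq 1$ built into the equation forces subsolutions to be globally Lipschitz (with constant $1$) in space, which is exactly the extra compactness one needs to compensate for the possible lack of regularity of $H$. First I would reduce to the case of a strict subsolution: for $\lambda>0$ small, replace $u$ by $u_\lambda(x,t):=u(x,t)-\lambda/(T-t)-\lambda t$ on $Q_T:=B_1\times(0,T)$, which is a strict subsolution (it satisfies the equation with $0$ replaced by some $-c_\lambda<0$ in the region where $u_t+H(Du)$ is the active term, and still satisfies $|Du_\lambda|\leq 1$), and which tends to $-\infty$ as $t\to T$; it then suffices to prove $u_\lambda\leq v$ and let $\lambda\to 0$, then $T\to\infty$.

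Next I would run the doubling argument: for $\eps>0$ consider
\[
M_\eps:=\sup_{(x,t),(y,s)\in\overline{Q_T}}\Big\{u_\lambda(x,t)-v(y,s)-\frac{|x-y|^2}{2\eps}-\frac{|t-s|^2}{2\eps}\Big\},
\]
assume for contradiction $\sup_{Q_T}(u_\lambda-v)>0$, so that $M_\eps>0$ for $\eps$ small, and let $(x_\eps,t_\eps,y_\eps,s_\eps)$ be a maximizer. Standard arguments (using that $u_\lambda$ is u.s.c., $v$ is l.s.c. and both are bounded) give $|x_\eps-y_\eps|^2/\eps\to0$, $|t_\eps-s_\eps|^2/\eps\to0$, and that any limit point $(\hat x,\hat t)$ of $(x_\eps,t_\eps)$ satisfies $(u_\lambda-v)(\hat x,\hat t)=\lim M_\eps>0$; since $u_\lambda\leq v$ on $\partial_P Q_T$ (on $\partial B_1$ by hypothesis, at $t=0$ by hypothesis, and near $t=T$ because $u_\lambda\to-\infty$), the point $(\hat x,\hat t)$ lies in the open set $Q_T$, hence for $\eps$ small both $(x_\eps,t_\eps)$ and $(y_\eps,s_\eps)$ are interior points. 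Then I would write the two viscosity inequalities with the test functions obtained from the penalization: with $p_\eps:=(x_\eps-y_\eps)/\eps$ and $\tau_\eps:=(t_\eps-s_\eps)/\eps$, the subsolution inequality for $u_\lambda$ reads $\tau_\eps+H(p_\eps)\leq -c_\lambda$ and $|p_\eps|\leq 1$, while the supersolution inequality for $v$ reads $\max(\tau_\eps+H(p_\eps);|p_\eps|-1)\geq 0$. Since $|p_\eps|\leq 1$ from the first inequality, the second forces $\tau_\eps+H(p_\eps)\geq 0$, contradicting $\tau_\eps+H(p_\eps)\leq-c_\lambda<0$. This contradiction shows $\sup_{Q_T}(u_\lambda-v)\leq0$, and letting $\lambda\to0$ and $T\to\infty$ finishes the proof.

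The main subtlety — and the reason the modified equation \eqref{eq:modifiedHJ} is introduced rather than working directly with \eqref{eq:HJ0} — is precisely that $H$ may take the value $+\infty$, so one cannot assume $H$ is continuous or even real-valued on the gradients that show up. The key mechanism is that the constraint $|p|-1\leq 0$ appearing in the subsolution inequality keeps $p_\eps$ in the compact set $\overline{B_1}=\overline{\mathrm{dom}(H)}$, and $H$ restricted there is the relevant object; combined with the supersolution inequality one never needs a modulus of continuity for $H$ at all, since the argument closes by a sign contradiction rather than by estimating $H(p_\eps)-H(q_\eps)$ (here $p_\eps=q_\eps$ automatically because there is a single quadratic penalization in $x-y$). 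The one point requiring care is the boundary analysis near $t=T$: one must check that the strict-subsolution perturbation indeed pushes $u_\lambda$ below $v$ there so that the maximizer cannot escape to the final time; this is where the $-\lambda/(T-t)$ term is used, and it is routine once $u$ and $v$ are known to be bounded on $\overline{Q_T}$.
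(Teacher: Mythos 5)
There is a genuine gap at the very last step of your doubling argument. From the subsolution inequality you only obtain $|p_\eps|\leq 1$, not $|p_\eps|<1$. If $|p_\eps|=1$, the supersolution inequality $\max\big(\tau_\eps+H(p_\eps);|p_\eps|-1\big)\geq 0$ is already satisfied by the second branch alone (since $|p_\eps|-1=0$) and yields no information whatsoever on $\tau_\eps+H(p_\eps)$; hence there is no contradiction with $\tau_\eps+H(p_\eps)\leq -c_\lambda$. Your strict-subsolution perturbation $-\lambda/(T-t)-\lambda t$ only makes the $u_t+H(Du)$ branch strict, not the gradient-constraint branch, and the constraint branch is precisely the one through which the supersolution can escape. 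This borderline case cannot be brushed aside: the constraint $|Du|\leq 1$ is an eikonal-type bound that sub- and supersolutions may saturate, so nothing in your argument prevents the penalization gradient from landing on $|p_\eps|=1$.

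The paper closes exactly this gap by comparing $\mu u$ with $v$ for $\mu\in(0,1)$ rather than $u$ with $v$. The test function for $u$ at the maximum point then has gradient $\mu^{-1}p$, so the subsolution constraint reads $|\mu^{-1}p|\leq 1$, i.e. $|p|\leq\mu<1$ strictly, and the supersolution max is genuinely forced onto the first branch. The price is that the two Hamiltonian terms no longer cancel: one is left with $\mu H(\mu^{-1}p)-H(p)$, which is shown to be nonnegative using the convexity of $H$ and $H(0)=0$ (the function $h(\mu)=\mu H(\mu^{-1}p)-H(p)$ satisfies $h(1)=0$ and $h'(\mu)\leq 0$ because $q\cdot DH(q)-H(q)\geq 0$). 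The strict positivity needed for the contradiction then comes from the $C/(T-t)$ term, much as in your $\lambda/(T-t)$ device. Your overall architecture (doubling of variables, localization in time, exclusion of boundary maxima) is sound, but the missing ingredient is this multiplicative perturbation making the gradient constraint strict; without it the proof does not close.
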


\begin{proof}
       Formally speaking, if one fixes $\mu\in(0,1)$ and considers a maximum point $(x_0,t_0)$
       of $\mu u-v$, then if $|Dv|(x_0,t_0)\geq1$, since by definition $\mu Du=Dv$ at $(x_0,t_0)$,
       it comes that $|Du|(x_0,t_0)\geq1/\mu>1$ which is impossible since $u$ is a viscosity subsolution.
       So we have both $|Dv|,|Du|<1$ at $(x_0,t_0)$ and we do the comparison as always, using standard viscosity techniques.
Now let us be more precise.

       We fix $\mu\in(0,1)$, and $T>0$ and consider a point
       $(x_0,y_0,t_0,s_0)\in \overline{B_1}^2\times[0,T]^2$ where
       $$\Phi:(x,y,t,s)\mapsto\mu u(x,t)-v(y,s)-\frac{|x-y|^2}{\eps^2}-\frac{|t-s|^2}{\beta^2}-\frac{C}{T-t}$$
       reaches its maximum. We assume that it is an interior point otherwise,
       using the boundary values one obtains immediately $\Phi\leq0$ in $B_1^2\times(0,T)^2$ which is
       what we want.

       Fixing one variable, since $(x,t)\mapsto \Phi(x,y_0,t,s_0)$ reaches its maximum at $(x_0,t_0)$ one may
       consider the following test function for $u$ at $(x_0,t_0)$:
       $$\phi_1(x,t):=\frac{1}{\mu}\Big(v(y_0,s_0)+\frac{|x-y_0|^2}{\eps^2}+\frac{|t-s_0|^2}{\beta^2}+
       \frac{C}{T-t}\Big)\,.$$
       Indeed, if we denote by $p:=2(x_0-y_0)/\eps^2$, $q:=2(t_0-s_0)/\beta^2$, it comes
       \begin{equation}\label{eq:comp-HJ-1}
       \max\Big(\frac{\mu^{-1}C}{(T-t)^2}+\mu^{-1}q+H(\mu^{-1}p)\,;\,|\mu^{-1}p|-1\Big)\leq0\,.
       \end{equation}
       On the other hand, for $v$ we use at $(y_0,s_0)$ the test function
       \begin{equation*}
       \phi_2(s,t):=\mu u(x_0,t_0)-\frac{|x_0-y|^2}{\eps^2}-\frac{|t_0-s|^2}{\beta^2}\,,
       \end{equation*}
       which leads to
       \begin{equation}
         \label{eq:comp-HJ-2}
        \max\Big(q+H(p)\,;\,|p|-1\Big)\geq0\,.
       \end{equation}

       If we assume that $|p|\geq1$ then $\mu^{-1}|p|\geq\mu^{-1}>1$ which is impossible from \eqref{eq:comp-HJ-1}.
       So, both $|p|$ and $\mu^{-1}|p|$ are less than 1 and then the proof follows standard arguments of
       viscosity solutions: we can combine \eqref{eq:comp-HJ-1} and \eqref{eq:comp-HJ-2}, getting rid of the max
       which gives (after multiplying the first inequality by $\mu$):
       \begin{equation}\label{eq:comp-HJ-3}
       \frac{C}{(T-t)^2}+\mu H(\mu^{-1}p)-H(p)\leq0\,.
       \end{equation}
       We claim that $h(\mu):=\mu H(\mu^{-1}p)-H(p)\geq0$ for any $p\in\R^N$ and $\mu\in(0,1)$, which leads to a contradiction with~\eqref{eq:comp-HJ-3}, so that an interior maximum of $\Phi$
       is impossible.  Hence, $\Phi\leq0$ in $(B_1)^2\times(0,T)^2$ and since $\beta,\eps,C,T,\mu$ are arbitrary,
       we finally conclude that $u\leq v$ in $B_1\times(0,\infty)$.

       To end the proof, let us check the claim:
       using the convexity of $H$, one gets
       $$h'(\mu)=-\frac{1}{\mu}\Big((\mu^{-1}p)\cdot D H(\mu^{-1}p)-H(\mu^{-1}p)\Big)\leq0\,,$$
       and since $h(1)=0$, we see that $h\geq0$ for $\mu\in(0,1)$.
\end{proof}

\begin{proposition}
       \label{prop:HJ.initial.trace}
       Let $u$ be a viscosity solution of $\max\Big(u_t+H(Du)\,;\,|Du|-1\Big)=0\,,$
       and $u=f$ on $\partial_P Q$. Then the initial trace of $u$, $u(0)$ verifies:
       \begin{equation}
         \label{eq:obstacle}
         \max\Big(u(0)-f;\,|Du(0)|-1\Big)=0\,.
       \end{equation}
\end{proposition}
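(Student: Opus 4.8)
The plan is to analyse the initial layer following \cite{Barles90}. By the previous proposition we may work with the equivalent equation $\max(u_t+H(Du);\,|Du|-1)=0$ in $Q=B_1\times(0,\infty)$ with data $f$ on $\partial_P Q$. Recall that in the critical setting $H$ is convex, continuous and finite on $B_1$, equals $+\infty$ outside $\overline B_1$, and is bounded from below on $\{|p|\leq1\}$ (the only negative contributions being the drift and the far-field part of the integral, both bounded there). Introduce the relaxed initial traces $\overline u_0(x):=\limsup_{(x',t')\to(x,0^+)}u(x',t')$ and $\underline u_0(x):=\liminf_{(x',t')\to(x,0^+)}u(x',t')$ on $\overline B_1$; since always $\underline u_0\leq\overline u_0$, the goal is to show that $\overline u_0$ is a viscosity subsolution and $\underline u_0$ a viscosity supersolution of the \emph{stationary} obstacle problem $\max(w-f(\cdot,0);\,|Dw|-1)=0$ in $B_1$ with $w=f(\cdot,0)$ on $\partial B_1$. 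A comparison principle for that equation then gives $\overline u_0\leq\underline u_0$, hence $\overline u_0=\underline u_0=:u(0)$, which is its unique solution, i.e.\ \eqref{eq:obstacle} holds.

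First, the gradient constraint passes to the traces. Since $u$ is in particular a viscosity subsolution of $|Du|-1\leq0$ in $Q$, it is $1$-Lipschitz in $x$, uniformly in $t>0$ (a classical fact; see e.g.\ \cite{BarlesBook}). Letting $t\to0^+$, both $\overline u_0$ and $\underline u_0$ are $1$-Lipschitz on $B_1$, so $|D\overline u_0|\leq1$ and $|D\underline u_0|\leq1$; moreover $\overline u_0=\underline u_0=f(\cdot,0)$ on $\partial B_1$, using the Lipschitz bound and the boundary condition $u=f$ on $\partial B_1\times(0,\infty)$. It therefore only remains to prove $\overline u_0\leq f(\cdot,0)$ and that $\underline u_0(x_0)\geq f(x_0,0)$ whenever a $C^1$ function touches $\underline u_0$ from below at $x_0\in B_1$ with test-gradient of norm $<1$.

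The crux is a time-penalization that pushes extrema to $t=0$. For the supersolution property: let $\varphi\in C^1$ touch $\underline u_0$ from below at $x_0\in B_1$ with $|D\varphi(x_0)|<1$, and for small $\delta>0$ minimize $(x,t)\mapsto u(x,t)-\varphi(x)+t/\delta$ over a fixed neighbourhood $\overline B_\eta(x_0)\times[0,\eta]$, at a point $(x_\delta,t_\delta)$. A routine argument shows $t_\delta\to0$ and $x_\delta\to x_0$. If $t_\delta>0$, the supersolution inequality for $u$ with the test function $\varphi(x)-t/\delta$ reads $\max(-1/\delta+H(D\varphi(x_\delta));\,|D\varphi(x_\delta)|-1)\geq0$; but $D\varphi(x_\delta)\to D\varphi(x_0)$ lies in the interior of $\mathrm{dom}(H)$, so $H(D\varphi(x_\delta))$ stays bounded while $|D\varphi(x_\delta)|<1$ for $\delta$ small, making the left-hand side negative --- a contradiction. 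Hence $t_\delta=0$ for all small $\delta$, and then the relaxed initial condition at $(x_\delta,0)\in\partial_P Q$ (whose $\varphi_t+H$ branch is again annihilated by the $-1/\delta$) forces $u(x_\delta,0)\geq f(x_\delta,0)$; passing to the limit and using the definition of $\underline u_0$ gives $\underline u_0(x_0)\geq f(x_0,0)$. The symmetric computation, penalizing with $-t/\delta$ and using instead that $H$ is bounded below on $\{|p|\leq1\}$, yields $\overline u_0\leq f(\cdot,0)$.

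Finally, $\overline u_0$ and $\underline u_0$ are a sub- and a supersolution of the stationary obstacle problem with the same boundary data; since $\max(r-f(x,0);\,|p|-1)$ is nondecreasing in $r$ and coercive in $p$, the standard comparison result --- or a direct adaptation of the proof of Proposition~\ref{prop:comp-HJ} --- gives $\overline u_0\leq\underline u_0$, hence equality, and \eqref{eq:obstacle} follows. The main obstacle, and the only genuinely new point with respect to the finite-Hamiltonian situation, is the collapse $t_\delta\to0$ combined with the fact that the $u_t+H(Du)$ branch of the obstacle equation is inactive in the layer: since $H$ is bounded on the range of gradients that can occur, the quantity $\pm1/\delta+H(D\varphi)$ can never satisfy the viscosity inequality, so only the constraint $|Du|\leq1$ --- and, at $t=0$, the obstacle $u=f$ --- survives. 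Making the relaxed-trace and boundary-corner formalism fully rigorous is the remaining, routine, chore.
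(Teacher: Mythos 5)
Your argument is correct and follows essentially the same route as the paper's proof: time-penalized test functions $\varphi(x)\pm Ct$ annihilate the $u_t+H(Du)$ branch (using that $H$ is bounded below on $\{|p|\leq 1\}$ in one direction and locally bounded on the interior of its domain in the other), so that only the obstacle $u\leq f$ and the constraint $|Du|\leq 1$ survive at $t=0$. You execute the collapse of the extremum to $t=0$ more carefully than the paper does, and your final comparison step identifying $\overline{u}_0=\underline{u}_0$ is an addition the paper omits, but the underlying mechanism is the same.
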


\begin{proof}
       Let $u$ be a subsolution of the equation with boundary data $f$. Then at $t=0$,
       $$\min\Big(\max(u_t+H(Du),|Du|-1)\,;\,u-f\Big)\leq0\,.$$
       We take a test function $\varphi(x,t)=C_\eps t + |x-x_0|^2/\eps^2$ such that
       $$\max(\varphi_t+H(D\varphi),|D\varphi|-1)\geq0\,.$$
       This is always possible if $\eps$ is small enough, so that this implies
       $u(0)\leq f$. Thus:
       $$\max\Big(u(0)-f;\,|Du(0)|-1\Big)\leq0\,.$$

       Now, we consider a supersolution $u$ and take $\varphi(x)$ such that $f-\varphi$ has a minimum
       at $x_0$. Then for any $C>0$,
       $$\psi:(x,t)\mapsto u(x,0)-\varphi(x)+Ct$$
       has a maximum at $t=0$, $x=x_0$. Using $\psi$ as test-function, we obtain
       $$\max\Big(\max(-C+H(D\varphi(0)),|D\varphi(0)|-1)\,;\,u-f\Big)\geq0\,.$$
       For $C$ big enough, we have $-C+H(D\varphi(0))<0$ so that there remain two possibilities:

       {\sc Case 1: } $|D\varphi(0)-1|<0$, which implies $u-f\geq0$ so that: $$\max\Big(\varphi(0)-f;\,|D\varphi(0)|-1\Big)\geq0\,.$$
       {\sc Case 2: } $|D\varphi(0)-1|\geq0$, which implies $$\max\Big(\varphi(0)-f;\,|D\varphi(0)|-1\Big)\geq0\,.$$

       So if $u$ is a solution, both inequalities give the equality.
\end{proof}

\begin{remark}
  {\rm  Equation~\eqref{eq:obstacle} can be understood as an obstacle problem: both $|Du(0)|\leq 1$, $u(0)\leq f$ and
  $(u(0)-f)(|Du(0)|-1)=0$.}
\end{remark}
\subsection{Back to the estimate of $I$}

As in Section \ref{sect:theoretical}, we define for any $A>0$:
\begin{equation}
       I_R^A:=-\frac{1}{R}\ln \big(\e^{-RA}+I\big)\, ,\qquad\ \overline{I}^A:= \limsup_{R\to\infty}I_R^A
       \, ,\ \qquad \underline{I}^A:= \liminf_{R\to\infty}I_R^A\,.
\end{equation}

Taking liminf and limsup and a test function, we see that for any $A>0$, $\overline{I}^A$ and
$\underline{I}^A$ are respectively sub and supersolutions of
$$\max\Big(u_t+H(Du)\,;\,|Du|-1\Big)=0\,,\quad I^A(0)=A\,.$$

Consequently, using \eqref{prop:comp-HJ}, we obtain that $\overline{I}^A=\underline{I}^A$, so that
as $R\to+\infty$, all the sequence $I^A_R$ converges to the unique solution $I^A$ of the problem.
Then as $A\to+\infty$, $I^A\to I$ which satisfies the equation with $I(0,x)=\dist(x;\partial B_1)$.

Now we have to identify the limit $I$, and to do so we have to study some properties of this specific Lagrangian.

\begin{lem}
	The Lagrangian $L$ satisfies the following properties:
	$$|DL(q)|<1\quad\text{and}\quad L(q)\sim |q|\text{ as }|q|\to\infty\,.$$
\end{lem}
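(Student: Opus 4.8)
The plan is to work throughout with the Legendre representation $L(q)=\sup_{p\in\overline{B_1}}\{p\cdot q-H(p)\}$ (the supremand is $-\infty$ off $\overline{B_1}$ since $\mathrm{dom}(H)=B_1$), and to reduce everything to two facts about $H$. The first is that $H$ is lower semicontinuous on $\R^N$: splitting the integral in~\eqref{eq:H:J} as $\int_{\{|y|<1\}}(\mathrm{e}^{p\cdot y}-1-p\cdot y)J\dy+\int_{\{|y|\ge1\}}(\mathrm{e}^{p\cdot y}-1)J\dy$, the first integrand is nonnegative and continuous in $p$ so Fatou gives lower semicontinuity, the second equals $\int_{\{|y|\ge1\}}\mathrm{e}^{p\cdot y}J\dy$ (l.s.c.\ by Fatou) minus a constant, and the remaining terms $\tr(Ap\otimes p)+B\cdot p$ are continuous. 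Being a proper l.s.c.\ convex function whose effective domain is the \emph{open} ball $B_1$, $H$ is continuous on $B_1$ and must blow up at the boundary, $H(p)\to+\infty$ as $|p|\to1^-$. The second fact is the trivial lower bound $H(p)\ge-C_0$ on $\overline{B_1}$, with $C_0:=|B|+\int_{\{|y|\ge1\}}J\dy$, coming from $\mathrm{e}^{p\cdot y}-1-(p\cdot y)\ind{|y|<1}\ge-\ind{|y|\ge1}$ and $\tr(Ap\otimes p)\ge0$.

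Granting these, the asymptotics $L(q)\sim|q|$ are easy. The upper bound is $L(q)=\sup_{|p|\le1}\{p\cdot q-H(p)\}\le|q|+C_0$, so $\limsup_{|q|\to\infty}L(q)/|q|\le1$. For the lower bound I would fix $\eps\in(0,1)$ and evaluate the supremand at $p_\eps:=(1-\eps)q/|q|$: since $\{|p|=1-\eps\}$ is a compact subset of $B_1=\mathrm{int}\,\mathrm{dom}(H)$, the quantity $H_\eps:=\sup_{|p|=1-\eps}H(p)$ is finite (by symmetry of $J$ it equals $H((1-\eps)\nu)$ for any unit $\nu$), hence $L(q)\ge(1-\eps)|q|-H_\eps$ and $\liminf_{|q|\to\infty}L(q)/|q|\ge1-\eps$; letting $\eps\to0$ closes the gap.

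For $|DL(q)|<1$ I would show that for \emph{every} $q\in\R^N$ the supremum defining $L(q)$ is attained at a unique point $p_*(q)$ lying in the open ball. Existence: $p\mapsto p\cdot q-H(p)$ is u.s.c., equals $-\infty$ on $\R^N\setminus\overline{B_1}$ and tends to $-\infty$ as $|p|\to1^-$ (here the boundary blow-up of $H$ is used), so it attains its maximum on the compact set $\overline{B_1}$ at some $p_*$, and $p_*$ cannot sit on $\partial B_1$; thus $|p_*(q)|<1$. Uniqueness is immediate from the strict convexity of $H$ on $B_1$ recorded in Lemma~\ref{lem:hamiltonian:properties}. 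Since $H$ is convex and l.s.c.\ we have $L^*=H$, so by Legendre duality $\partial L(q)$ coincides with the set of maximizers of the supremum, whence $\partial L(q)=\{p_*(q)\}$; therefore $L$ is differentiable at $q$ with $DL(q)=p_*(q)$, and $|DL(q)|=|p_*(q)|<1$.

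The one genuinely load-bearing step is the boundary blow-up $H(p)\to+\infty$ as $|p|\to1^-$; everything else is routine convex analysis, and, in contrast to the estimates of the later sections, neither assertion needs $|q|$ large — both hold for all $q$, so no asymptotic bookkeeping is required. It is precisely here that the critical assumption $\omega\to\ell$ (normalized so that $\mathrm{dom}(H)=B_1$, i.e.\ $\Hess(\nu)=+\infty$ for unit $\nu$) enters: monotone convergence gives $\Hess(r\nu)\uparrow\Hess(\nu)=+\infty$ as $r\uparrow1$, and the uniform bound $|H-\Hess|\le C$ on $B_1$ coming from the splitting in the proof of Lemma~\ref{lemma:H:singular} transfers the blow-up from $\Hess$ to $H$.
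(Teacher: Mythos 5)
Your proof is correct. For $|DL(q)|<1$ you take essentially the paper's route --- the boundary blow-up of $H$ forces the maximizer $p_*(q)$ of $p\mapsto p\cdot q-H(p)$ into the open ball, and $DL(q)=p_*(q)$ --- but you supply the details (lower semicontinuity of $H$ via Fatou, blow-up from l.s.c.\ convexity with open effective domain, uniqueness from strict convexity, the duality identification of $\partial L(q)$ with the set of maximizers) that the paper leaves implicit. For $L(q)\sim|q|$ your lower bound is genuinely different: the paper bounds $H(p)\leq f(p)+c(N)(N-1)!/(1-|p|)^{N}$ near $\partial B_1$ using $J(y)\lesssim\e^{-|y|}$ and then analyses the maximizer of the perturbed supremum, showing $|p_0(q)|\to1$ with $p_0$ and $q$ aligned; you simply evaluate the supremand at $(1-\eps)q/|q|$ and use only that $H$ is bounded on the compact sphere $\{|p|=1-\eps\}\subset B_1$. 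Your version is more elementary and more robust (it never needs the pointwise comparison $J(y)\leq \e^{-|y|}$, which $\omega\to1$ does not literally give), at the price of yielding no quantitative information on $L(q)-|q|$ --- which the lemma does not ask for. Your upper bound $L(q)\leq|q|+C_0$ is also more careful than the paper's bare $L(q)\leq|q|$, which tacitly uses $H\geq0$. Two harmless slips: $\sup_{|p|=1-\eps}H(p)$ equals $H((1-\eps)\nu)$ for every unit $\nu$ only if $A$ and $B$ are radially symmetric as well (finiteness is all you actually use); and $r\mapsto\Hess(r\nu)$ is not monotone on the half-space $\{\nu\cdot y<0\}$, so that (bounded) piece must be split off before invoking monotone convergence --- or one argues directly by Fatou, which is what your first, cleaner justification of the blow-up already does. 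Both your argument and the paper's rest on the unproved assertion that $\Hess(\nu)=+\infty$ for $|\nu|=1$, i.e.\ that the effective domain is the \emph{open} ball; you at least make this hypothesis explicit.
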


\begin{proof}
	Since by definition,
	$$L(q)=\sup_{p\in\R^N}\{p\cdot q-H(p)\}=\sup_{|p|<1}\{p\cdot q-H(p)\}\,,$$
	and $H(p)\to+\infty$ as $|p|\to1$, this implies that the sup is attained at some $|p_0(q)|<1$.
	On the other hand, a simple calculus shows that $DL(q)=p_0(q)$, so that indeed, for any $q$,
	$|DL(q)|<1$. This also implies a first basic estimate: $L(q)\leq |q|$.
	To get the equivalent, we first bound $H(p)$. Let us first notice that
	$$H(p)=\Hess(p)+f(p)=\Hess(p)+O(1)\,,$$
	indeed the singular and differential parts of the hamiltonian remain bounded in the set $\{|p|<1\}$,
	as well as $Df(p)$.	Now, for any $|p|<1$,
	$$H(p)\leq f(p)+\int_{\R^N}\e^{(|p|-1)|y|}\dy\leq f(p)+c(N)\int_0^\infty\e^{(|p|-1)r}r^{N-1}\d r=
	f(p)+\frac{c(N)(N-1)!}{(1-|p|)^N}\,.$$
	Hence,
	$$L(q)\geq\sup_{|p|<1}\big\{p\cdot q - \frac{c(N)(N-1)!}{(1-|p|)^N}-f(p)\big\}\,,$$
	and this sup is attained for $p_0=p_0(q)$ satisfying the equation:
	$$q=|p|c(N)(N-1)!\big(1-|p|\big)^{-N-1}\frac{p}{|p|}+Df(p)\,.$$
	Thus, as $|q|\to\infty$, necessarily $|p_0|\to1$ and since $p_0$ and $q$ point in the same direction,
	$$L(q)\geq\sup_{|p|<1}\big\{p\cdot q - \frac{c(N)(N-1)!}{(1-|p|)^N}-f(p)\big\}\sim p_0(q)\cdot q \sim |q|\,.$$
	Since we have seen that $L(q)\leq|q|$, we conclude that
	$$L(q)\sim|q|\quad\text{as}\quad |q|\to \infty\,.$$
\end{proof}

Since $J$ is assumed to be symmetric, so are $H$ and $L$ and we write $\underline{L}(|x|)=L(x)$.

\begin{proposition}
	The solution of \eqref{eq:HJ0} with initial data $I(0,x)=\dist(x;\partial B_1)$ and $I=0$ on the boundary
	is given by the Lax-Oleinik formula:
       \begin{equation*}
               I(x,t)=t\underline{L}\Big(\frac{\dist(x;\partial B_1)}{t}\Big)\,.
       \end{equation*}
\end{proposition}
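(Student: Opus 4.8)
The plan is to exhibit the claimed function as \emph{a} viscosity solution of the problem and then conclude by uniqueness. By the discussion preceding the statement, $I$ is the viscosity solution of the modified equation $\max(u_t+H(Du)\,;\,|Du|-1)=0$ in $B_1\times(0,\infty)$ that equals $0$ on $\partial B_1\times[0,\infty)$ and $\dist(\cdot;\partial B_1)$ at $t=0$, and by Proposition~\ref{prop:comp-HJ} two functions that are sub/supersolution of this equation and ordered on the parabolic boundary $\partial_P Q$ are ordered everywhere. Hence it suffices to check that
$$W(x,t):=t\,\underline{L}\!\left(\frac{\dist(x;\partial B_1)}{t}\right)\ \ (t>0),\qquad W(x,0):=\dist(x;\partial B_1),$$
is a viscosity solution of the modified equation taking the same data on $\partial_P Q$; applying Proposition~\ref{prop:comp-HJ} in both directions then forces $W=I$.

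First I would record the elementary properties of $W$. By the lemma just proved, $\underline L$ is nonnegative, convex, vanishes at $0$, satisfies $\underline L(s)\le s$ and $\underline L(s)/s\uparrow 1$ as $s\to\infty$, and (since $|DL|<1$ and, by symmetry, $\underline L'(|q|)=|DL(q)|$) belongs to $C^2(\R_+)$ with $0\le\underline L'<1$. Consequently $0\le W\le\dist(\cdot;\partial B_1)$, and $W$ is continuous on $\overline B_1\times[0,\infty)$ with $W\equiv 0$ on $\partial B_1\times[0,\infty)$ and $W(\cdot,0)=\dist(\cdot;\partial B_1)$: for $x_0\in B_1$ write $W(x,t)=(1-|x|)\,\underline L((1-|x|)/t)\big/\big((1-|x|)/t\big)\to 1-|x_0|$, while near $\partial B_1$ use $0\le W\le 1-|x|$. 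In particular $W$ attains the data of \eqref{eq:HJ0} continuously, so the relaxed boundary/initial conditions hold trivially and, for the application of Proposition~\ref{prop:comp-HJ}, $W$ and $I$ simply coincide on $\partial_P Q$.

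The core is the verification that $\max(W_t+H(DW)\,;\,|DW|-1)=0$ in $B_1\times(0,\infty)$. On $(B_1\setminus\{0\})\times(0,\infty)$, where $W$ is smooth, set $v^\ast(x,t)=(x-x/|x|)/t$, the scaled vector from the projection of $x$ onto $\partial B_1$ to $x$; then $|v^\ast|=(1-|x|)/t$, so $W=tL(v^\ast)$, and a short computation gives $DW=DL(v^\ast)$ and $\partial_t v^\ast=-v^\ast/t$, whence, using the Legendre identity $DL(v^\ast)\cdot v^\ast-L(v^\ast)=H(DL(v^\ast))$,
$$W_t=L(v^\ast)-DL(v^\ast)\cdot v^\ast=-H(DW),\qquad |DW|=|DL(v^\ast)|<1,$$
so the maximum is $\max(0\,;\,<0)=0$ there. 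The only singular set is the ray $\{x=0\}\times(0,\infty)$, where $W(\cdot,t)$ has a downward conical kink: the $x$‑subdifferential is empty, so the supersolution test is vacuous, while the parabolic superdifferential is $\{W_t(0,t)\}\times\overline B\big(0,\underline L'(1/t)\big)$, so the subsolution test reduces to $W_t(0,t)+\underline H(\underline L'(1/t))\le 0$, where $H(p)=\underline H(|p|)$ by symmetry. This is in fact an equality, since $W_t(0,t)=\underline L(1/t)-\underline L'(1/t)/t$ and $\underline H(\underline L'(s))=s\,\underline L'(s)-\underline L(s)$ at $s=1/t$. This finishes the check that $W$ is a viscosity solution of the modified equation.

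Finally, $W$ and $I$ are viscosity solutions of $\max(u_t+H(Du);|Du|-1)=0$ in $B_1\times(0,\infty)$ agreeing on $\partial_P Q$, so Proposition~\ref{prop:comp-HJ} (used twice) yields $W=I$, i.e. $I(x,t)=t\,\underline L(\dist(x;\partial B_1)/t)$. I expect the only genuinely delicate point to be the behaviour on the singular ray $\{x=0\}$: one must make sure the conical kink only produces admissible test gradients — those of norm $<1$, hence in the interior of $\mathrm{dom}(H)=B_1$, so that the infinite part of $H$ never interferes — and that the subsolution inequality there is the Legendre identity rather than a strict inequality, both of which rely on the strict bound $|DL|<1$ from the preceding lemma.
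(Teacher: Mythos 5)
Your proposal is correct and follows essentially the same route as the paper: exhibit the radial formula as a viscosity solution of the modified equation $\max(u_t+H(Du);|Du|-1)=0$ (the constraint being automatic from $|DL|<1$ and the initial data attained since $L(q)\sim|q|$), then conclude by the comparison principle. Your direct verification — in particular the explicit handling of the conical kink at $x=0$ via the Legendre identity — fills in details that the paper delegates to the general Lax--Oleinik theory.
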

\begin{proof}
	Since $|DL|<1$, then $|DI|<1$ so that the compatibility condition is always fulfilled and
	the equation holds everywhere. Now we take a look at the initial data. Since $L(q)\sim|q|$,
	this implies:
	$$t\underline{L}\Big(\frac{\dist(x;\partial B_1)}{t}\Big)\to \dist(x;\partial B_1)\text{  as  }t\to0\,,$$
	so that indeed the Lax-Oleinik formula gives a solution. Since the viscosity solution is unique,
	this ends the proof.
\end{proof}

The reader will easily check that if $\ell=\beta_0\neq1$, then all the results of this section remain valid
and then $L(q)\sim\beta_0|q|$ as $|q|\to\infty$. Moreover, Lemma~\ref{lemma:H:singular} and
Lemma~\ref{lemma:reduce:symmetric} are also valid in the present case since $|p|$ remains bounded: as we have seen,
$H(p)=\Hess(p)+O(1)$ as $|p|=\beta_0$.

Hence we may write down a more general result for possibly non-symmetric and singular kernels:

\begin{theorem}\label{theorem:critical:kernel}
	Let $J$ be a kernel satisfying \eqref{eq:hyp:J} and \eqref{eq:hyp3:J}. In particular,
	$J$ can be asymmetric and have a singularity	at the origin.
	Then for any $\theta\in(0,1)$, we have the following estimate as $R\to\infty$: for any $\theta\in(0,1)$
	$T>0$,
	\begin{equation}
        \sup_{|x|\leq\theta R\atop 0\leq t\leq TR}|u-u_R|(x,t)\leq \e^{-{(1-\theta)}{\beta_0} R+o(R)}
    \end{equation}
\end{theorem}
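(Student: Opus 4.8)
The plan is to mirror the proofs of Theorems~\ref{theo:compact:support} and~\ref{teo:main}, the novelty being that the arguments of Section~\ref{sect:theoretical} must now be run with the non-finite-Hamiltonian machinery developed above. After the harmless normalisation $\beta_0=1$, I would proceed exactly as in Section~\ref{subsect:formal}: let $v_R$ solve \eqref{eq:0} in $B_R$ with zero initial datum and boundary value $\|u_0\|_\infty$, so that $0\leq u-u_R\leq v_R$ by comparison; rescale $w_R(x,t)=v_R(Rx,Rt)$ and set $I_R^A=-R^{-1}\ln(w_R+\e^{-RA})$, which solves \eqref{eq:IR} in the viscosity sense. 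Passing to the half-relaxed limits $\overline I^A,\underline I^A$ and testing with smooth functions --- as in the subsection ``Back to the estimate of $I$'' --- shows that they are respectively a sub- and a supersolution of the modified equation \eqref{eq:modifiedHJ} with datum $A$ at $t=0$; the comparison Proposition~\ref{prop:comp-HJ} then forces $\overline I^A=\underline I^A=:I^A$, and letting $A\to\infty$ gives that $I_R\to I$ locally uniformly, where $I$ solves \eqref{eq:HJ0} with initial trace understood in the obstacle sense \eqref{eq:obstacle} of Proposition~\ref{prop:HJ.initial.trace}, i.e.\ $I(\cdot,0)=\dist(\cdot;\partial B_1)$. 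In particular the estimate \eqref{eq:est:sup:theo} of Theorem~\ref{thm:est-IR} holds with $I_\infty:=I$ and with $o(1)$ uniform on $\{|x/R|\leq1,\,0\leq t/R\leq T\}$.

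Next I would strip off the differential terms and the possible singularity at the origin and reduce to a symmetric kernel. Since $\mathrm{dom}(H)=B_1$, the relevant momenta stay bounded, so Lemma~\ref{lemma:H:singular} applies in the form $H(p)=\Hess(p)+O(1)$, $L(q)\sim\Less(q)$ on $\{|p|\leq1\}$: the terms $\tr(Ap\otimes p)+B\cdot p$ and the part of $J$ near the origin contribute only a bounded perturbation. Moreover, by the variant of Lemma~\ref{lemma:reduce:symmetric} recorded in the Remark that follows it (valid under \eqref{eq:hyp3:J} with $\beta_2\leq\beta_1$ and $|p|<\beta_2$), choosing a symmetric kernel $J_*$ with $J\curlyeqprec J_*$ and the \emph{same} tail rate $\beta_0=1$ yields $L(q)\geq L_*(q)$ for $|q|$ large. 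Writing $\underline L_*(|x|)=L_*(x)$ and arguing as in the proof of Theorem~\ref{teo:main} --- using that $r\mapsto\underline L_*(cr)/r$ is nondecreasing to evaluate the minimum over $\partial B_1$ --- one obtains
\[
	-R\,I_\infty(x/R,t/R)\ \leq\ -\,t\,\underline L_*\!\Big(\tfrac{\dist(x;\partial B_R)}{t}\Big)\,.
\]

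Finally I would insert the asymptotics established in this section for the symmetric critical kernel, namely $L_*(q)\sim\beta_0|q|$ as $|q|\to\infty$. Restricting to $|x|\leq\theta R$ forces $\dist(x;\partial B_R)\geq(1-\theta)R\to\infty$, hence in the regime under consideration $\dist(x;\partial B_R)/t\to\infty$ and $t\,\underline L_*(\dist(x;\partial B_R)/t)=\beta_0\,\dist(x;\partial B_R)(1+o(1))\geq(1-\theta)\beta_0R(1+o(1))$; together with \eqref{eq:est:sup:theo} this yields the announced bound $\e^{-(1-\theta)\beta_0R+o(R)}$. The delicate point is the convergence step with $H$ infinite outside $B_{\beta_0}$: one has to check that every viscosity inequality met along the way only ever involves test functions with $|D\varphi|\leq\beta_0$ --- so that $\overline I^A$ and $\underline I^A$ are genuine sub/supersolutions of \eqref{eq:modifiedHJ} --- and that the initial datum is taken in the obstacle form \eqref{eq:obstacle} rather than classically, which is precisely the initial-layer phenomenon analysed above. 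Once this is in place, everything else is a direct transcription of the compactly supported and intermediate cases.
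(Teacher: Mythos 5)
Your proposal is correct and follows essentially the same route as the paper: reduction to a symmetric majorant kernel with the same $\beta_0$ via (the remark after) Lemma~\ref{lemma:reduce:symmetric}, elimination of the differential and singular terms (which stay bounded since the relevant momenta are confined to $\{|p|<\beta_0\}$), the rescaling/log-transform argument of Section~\ref{sect:theoretical} rerun with the non-finite-Hamiltonian machinery (modified equation, comparison, obstacle-type initial trace), and finally the asymptotics $L_*(q)\sim\beta_0|q|$ together with $\dist(x;\partial B_R)\geq(1-\theta)R$. The paper's own proof is only a sketch pointing to exactly these ingredients, so your more detailed write-up is a faithful expansion of it.
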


\begin{proof} We skip the details since this is the same as for Proposition
\ref{theo:compact:support}: we first reduce the estimate
to symmetric kernel by comparison, putting a symmetric kernel above $J$ with the same $\beta_0$
in \eqref{eq:hyp3:J} and then we wipe out the possible
singularities and differential terms. Actually, the proof is even simpler since
since those terms remain bounded in the set $\{|p|<\beta_0\}$, hence only the exponential part of the Hamiltonian
plays a role in the estimate.
\end{proof}

\begin{remark}{\rm As $\beta_0\to 0$, the estimate gets worse. Indeed, this means that the kernel tends to
	behave slower than an exponential and we are facing a problem of fat tails (like a power decay),
	that this method cannot handle.}
\end{remark}

\section{Further results and comments}
\setcounter{equation}{0}
\label{sect:comments}

\subsection{Explicit bounds}\label{subsect:explicit:estimates}

In some particular cases, we already gave concrete estimates in~\cite{BrandleChasseigne08-1}, by studying directly the Hamiltonian $H$.
Computing the function $\mathcal{K}^{-1}$,  we recover here these estimates under the following form:
\begin{equation*}
     \sup_{|x|<M, 0<t<1}|u-u_R|(x,t)\leq \e^{-R\,\mathcal{K}^{-1}(\ln R)(1+o(1))}
\end{equation*}

\begin{table}[ht]\label{table:behaviour}
  \begin{center}
  \begin{tabular}
  {c@{\hskip2cm}c@{\hskip2cm}c}
  \hline\\
  {\bf Kernel}& $\mathcal{K}^{-1}(z) $& $\sup|u-u_R|$\\[12pt]
  \hline
  \\
  $J(y)=\ind{{|y|\leq \rho}}(y)$ & $\displaystyle z\rho^{-1}$& $\e^{-R\ln(R)/\rho}$ \\[6pt]
  $J(y)=\e^{-\e^{|y|}}$& $ z(\ln z)^{-1}$&$\e^{-R(R\ln R)}$\\[6pt]
  $J(y)=\e^{-|y|^\alpha}$,\  $\alpha> 1$& $ z^{(\alpha-1)/\alpha}$&$\e^{-R(\ln R)^{(\alpha-1)/\alpha}}$\\[6pt]
  $J(y)=\e^{-|y|\ln |y|}$&$\ln z$&$\e^{-R\ln\ln R}$\\[6pt]
  $J(y)=\e^{-\alpha|y|},\  \alpha>0$& $\alpha$& $\e^{-\alpha R}$\\[6pt]
  \end{tabular}
\end{center}
  \caption{Behaviour of $\mathcal{K}^{-1}$}
\end{table}

Table \ref{table:behaviour} collects some known asymptotic behaviors of $\mathcal{K}^{-1}$ and
$\sup(u-u_R)$. Notice that since $\mathcal{K}$ is superlinear (except in the critical case), then
$\mathcal{K}^{-1}(z)$ is defined for $z>0$ big enough.
Most of the calculations are straightforward, we only sketch the case $J(y)=\e^{\e^{-y}}$: in this case,
$\mathcal{K}(y)\sim |y|\ln|y|$ and if we define $z=|y|\ln|y|$ we get $\ln z=\ln|y|+\ln \ln |y|\sim\ln|y|$. Hence
$z\sim |y|\ln z$, which implies $|y|\sim z/\ln z$. Let us also mention that in the case $J(y)=\e^{-\alpha|y|}$,
$\mathcal{K}^{-1}(z)=\alpha$ in the sense of graphs.

\begin{remark}
  {\rm As we have seen, the presence of a singularity at the origin does not modify the behaviour of $\mathcal{K}^{-1}$, so by instance, if we multiply by
  $  |y|^{-2}  $
  any of the previous kernels we obtain the same estimate.}
\end{remark}

\subsection{Non-symmetric kernels in one space dimension}\label{subsect:asymetric:1D}

As we have seen, it is in general difficult to give an explicit behaviour of
$\sup|u-u_R|$ for non-symetric kernels, unless we compare with a symetric one.
But in the case $N=1$, the regions where $\{x>0\}$ and $\{x<0\}$ are clearly seperated
so that more precise estimates can be given according to the tails
 of $J$ at $-\infty$ and $+\infty$ which can be different. We shall just illustrate this
in an explicit case which concerns the most extreme cases we can cover: on the one
side we have a compactly supported kernel, while on the other side, we have an exponential decay.

\begin{proposition}
    \label{prop:J:nonsymmetric}
    Let us assume that $N=1$ and $J$ is given by:
	$$J(y):= \frac{1}{2}\e^{-|y|}\ind{y<0}+\frac{1}{2}\ind{y\geq0}\,.$$
	Then the associated Lagrangian satisfies:	
	$$L(q)\mathop{\sim}_{q\to+\infty} q \ln q\,,\quad\text{and}\quad
	L(q)\mathop{\sim}_{q\to-\infty} q\,.$$
\end{proposition}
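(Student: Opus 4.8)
The plan is to exploit the one space dimension. Although $L(q)=\sup_{p\in\R}\{pq-H(p)\}$ is a supremum over all $p$, strict convexity of $H$ (Lemma~\ref{lem:hamiltonian:properties}) forces the maximiser $p_0(q)$ to be unique and to solve $q=H'(p_0)$, so $L(q)=p_0(q)\,q-H(p_0(q))$. Here $a=b=0$, $N=1$, and since $J$ has finite total mass,
\[
H(p)=\int_\R\bigl(\mathrm{e}^{py}-1-py\,\ind{|y|<1}\bigr)J(y)\dy=\int_\R \mathrm{e}^{py}J(y)\dy+O(|p|),
\]
which is finite precisely for $p\in\mathrm{dom}(H)=(-1,\infty)$: the exponential tail $\tfrac12\mathrm{e}^{-|y|}$ on $\{y<0\}$ makes $\int_{-\infty}^0\mathrm{e}^{(p+1)y}\dy$ diverge when $p\leq-1$. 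Since $H'$ is increasing with $H'\to-\infty$ as $p\to(-1)^+$ and $H'\to+\infty$ as $p\to+\infty$, the inverse $p_0(q)=(H')^{-1}(q)$ is well defined for all $q$, with $p_0(q)\to+\infty$ as $q\to+\infty$ and $p_0(q)\to-1$ as $q\to-\infty$. So each of the two limits is governed by the behaviour of $H$ at one end of its domain, where only one side of $J$ contributes, and the elementary asymptotics of the two one-sided integrals finish the proof.

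For $q\to+\infty$: as $p\to+\infty$ the $\{y<0\}$ part of $\int_\R\mathrm{e}^{py}J$ is $\tfrac12\int_{-\infty}^0\mathrm{e}^{(p+1)y}\dy=\tfrac1{2(p+1)}=O(1)$, while $\tfrac12\int_0^1\mathrm{e}^{py}\dy=\tfrac{\mathrm{e}^p-1}{2p}$; hence $H(p)=\tfrac{\mathrm{e}^p}{2p}\,(1+o(1))$ and, differentiating under the integral, $H'(p)=\tfrac{\mathrm{e}^p}{2p}\,(1+o(1))$ as well. This mirrors the compactly supported analysis of Section~\ref{sect:compact}, the right edge $y=1$ of the support in the direction $\nu=+1$ playing the role of $\rho(\nu)=1$. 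Solving $q=H'(p_0)\sim\mathrm{e}^{p_0}/(2p_0)$ gives $p_0(q)\sim\ln q$, and since $H(p_0)\sim\mathrm{e}^{p_0}/(2p_0)\sim q$,
\[
L(q)=p_0q-H(p_0)=q\bigl(p_0-1+o(1)\bigr)\sim q\ln q .
\]
Alternatively one reduces by comparison to the symmetric compactly supported kernel of radius $1$ and quotes Lemma~\ref{lem:compactsupport} and Theorem~\ref{theo:compact:support}, the bound $H(p)\leq\mathrm{e}^p/(2p)+O(p)$ above supplying the matching upper estimate.

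For $q\to-\infty$: the relevant end is the finite endpoint $p=-1$, which is the critical situation of Section~\ref{sect:critical} with $\beta_0=1$ localised on the negative side of $J$. With $s:=p+1\to0^+$, the $\{y\geq0\}$ part contributes $O(1)$ to $H$ and $H'$, while $\tfrac12\int_{-\infty}^0\mathrm{e}^{sy}\dy=\tfrac1{2s}$ and $\tfrac12\int_{-\infty}^0 y\,\mathrm{e}^{sy}\dy=-\tfrac1{2s^2}$, so $H(p)=\tfrac1{2(p+1)}+O(1)$ and $H'(p)=-\tfrac1{2(p+1)^2}+O(1)$. From $q=H'(p_0)$ one gets $p_0+1\sim(2|q|)^{-1/2}$, hence $p_0(q)\to-1$ and $H(p_0)\sim\tfrac12(2|q|)^{1/2}$, so
\[
L(q)=\bigl(-1+(p_0+1)\bigr)q-H(p_0)=|q|+O\!\bigl(\sqrt{|q|}\bigr)\sim|q| ,
\]
i.e. $L(q)\sim-q$ (read with $|q|$, since $L\geq0$). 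Equivalently this is the relation $L(q)\sim\beta_0|q|$ of Section~\ref{sect:critical} with $\beta_0=1$, obtained by comparing with the symmetric kernel $\tfrac12\mathrm{e}^{-|y|}$ and applying Lemma~\ref{lemma:H:singular}.

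The only genuinely delicate point is making this ``one side only'' reduction rigorous: in each limit one must locate $p_0(q)$ (large positive, resp. within $O(|q|^{-1/2})$ of $-1$) and verify that the opposite side of $J$ contributes only lower-order terms to $H$, $H'$ and hence to $L$. This follows from strict convexity together with the behaviour of $H'$ at the two ends of $\mathrm{dom}(H)$ and the two one-sided integral asymptotics above; it must, however, be written out with care --- the more so as $J$ is not $\mathrm{C}^1$ and satisfies only~\eqref{eq:hyp3:J} (with $\beta_0=1$), not~\eqref{eq:hyp2:J} --- so one either argues directly as here or first regularises and reduces by comparison, via Lemma~\ref{lemma:reduce:symmetric}, to the two symmetric kernels just mentioned.
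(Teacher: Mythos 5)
Your proof is correct and takes essentially the same route as the paper's: compute the Hamiltonian explicitly (the paper records $H(p)=\frac{\e^p}{2p}-\frac{1}{2p}+\frac{1}{2(p+1)}-1$ on $\{p>-1\}$), then observe that for $q\to+\infty$ the maximiser $p_0(q)$ escapes to $+\infty$ so only the compactly supported side of $J$ matters (giving the $q\ln q$ behaviour of Section~\ref{sect:compact}), while for $q\to-\infty$ it approaches the endpoint $p=-1$ where the exponential tail governs (giving the linear, critical behaviour of Section~\ref{sect:critical}). The only difference is one of detail: the paper simply invokes those earlier sections, whereas you solve $q=H'(p_0)$ asymptotically by hand and check that the opposite side contributes only lower-order terms, which is a welcome, more self-contained rendering of the same argument.
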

\begin{proof}
	A straightforward calculus shows that the Hamiltonian is defined in
	the region $\{p>-1\}$ and that:
	$$H(p)=\frac{\e^p}{2p}-\frac{1}{2p}+\frac{1}{2(p+1)}-1\,.$$
	For $q>0$, we calculate the Lagragian as follows:
	$$L(q)=\sup_{p>-1}\{pq-H(p)\}=\sup_{p>0}\{pq-H(p)\}\,.$$
	Indeed, if $q>0$, then $pq>0$ only for $p>0$ and we know the sup is nonnegative so that
	it has to be attained for $p>0$. With this remark, the estimate is just the same as for
	the case when $J(y)=\frac{1}{2}\ind{y\geq0}$. The same remark holds when $q<0$: the sup
	is attained in the region $-1<p<0$ and the behaviour is given by then exponential decay of
	$J$.
\end{proof}

Then we are able to use Theorem \ref{thm:est-IR} in a more precise way (we consider $x$ fixed in a bounded
domain for simplicity):
\begin{proposition}
	Let $J$ be defined as above. Then
	\begin{eqnarray*}
  	\sup_{0<x<M,0<t<1} |u-u_R|(x,t)&\leq& \e^{-R\ln R (1+o(1))}\\
  	\sup_{-M<x<0,0<t<1} |u-u_R|(x,t)&\leq& \e^{-R(1+o(1))}\\
  	\end{eqnarray*}
\end{proposition}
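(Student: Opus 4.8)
The plan is to read off the two claimed rates directly from Theorem~\ref{thm:est-IR}, feeding in the one–sided asymptotics of $L$ just established in Proposition~\ref{prop:J:nonsymmetric}. Fix $M>0$ and restrict to $x$ in a bounded interval and $0<t<1$; as $R\to\infty$ the rescaled point $(x/R,t/R)$ stays in a set of the form $\{|x/R|\le1,\ 0\le t/R\le T\}$, so Theorem~\ref{thm:est-IR} applies and gives
\[
	|u-u_R|(x,t)\le \e^{-R\,I_\infty(x/R,t/R)+o(1)R},\qquad
	I_\infty(x,t)=\min_{y\in\partial B_1}t\,L\!\Big(\frac{x-y}{t}\Big),
\]
with the $o(1)$ uniform. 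Since $N=1$ we have $\partial B_1=\{-1,1\}$, so that, undoing the scaling,
\[
	R\,I_\infty(x/R,t/R)=\min\Big\{\,t\,L\!\Big(\tfrac{x-R}{t}\Big)\,,\ t\,L\!\Big(\tfrac{x+R}{t}\Big)\Big\}.
\]

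Next I would record the behaviour of the two arguments: for $x$ bounded and $0<t<1$, as $R\to\infty$ one has $\tfrac{x-R}{t}\to-\infty$ and $\tfrac{x+R}{t}\to+\infty$, both uniformly, with $\big|\tfrac{x-R}{t}\big|\ge R-M$ and $\tfrac{x+R}{t}\ge R$. Thus Proposition~\ref{prop:J:nonsymmetric} applies on each side: on the side $q\to-\infty$, which is controlled by the exponentially decaying tail of $J$, $L(q)\sim|q|$, so that term equals $(R-x)(1+o(1))$; on the side $q\to+\infty$, which is controlled by the compactly supported tail of $J$, $L(q)\sim q\ln q$, so that term equals $(x+R)\ln\tfrac{x+R}{t}\,(1+o(1))$. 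The remainders are uniform in $t\in(0,1)$: since $\ln\tfrac{x+R}{t}=\ln(x+R)+\ln\tfrac1t$ and $\ln\tfrac1t>0$, the small-$t$ regime only increases $I_\infty$, hence is harmless for the upper bound, and one may bound $\ln\tfrac{x+R}{t}$ from below by $\ln R$.

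It then remains only to evaluate the minimum on each side of the origin, matching the correct one–sided asymptotics of $L$ to the relevant endpoint of $(-R,R)$: this is the single new ingredient compared with the symmetric settings of §\ref{sect:compact}–§\ref{sect:critical}, and it is where one uses that in dimension one the regions $\{x>0\}$ and $\{x<0\}$ are separated. A direct computation then gives, for $0<x<M$, $R\,I_\infty(x/R,t/R)=R\ln R\,(1+o(1))$ (the contribution governed by the compactly supported tail), and, for $-M<x<0$, $R\,I_\infty(x/R,t/R)=R\,(1+o(1))$ (the contribution governed by the exponential tail); substituting into Theorem~\ref{thm:est-IR} yields the two displayed estimates. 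I expect the only delicate point to be precisely this bookkeeping of which tail acts at which endpoint in the non-symmetric case; once that is settled the remaining algebra is the one–dimensional analogue of the computations in the proofs of Theorem~\ref{theo:compact:support} and Theorem~\ref{theorem:critical:kernel}, and of \cite[Lemma~4.1, Corollary~4.2]{BrandleChasseigne08-1}.
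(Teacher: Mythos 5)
Your reduction to Theorem~\ref{thm:est-IR} and the one--sided asymptotics of the two boundary terms are correct and follow the paper's strategy. The gap is precisely in the step you defer to ``a direct computation''. With your own estimates, for every $|x|\leq M$ and $0<t<1$ one term of the minimum is
\[
t\,L\Big(\tfrac{x-R}{t}\Big)=(R-x)(1+o(1))=R\,(1+o(1)),
\]
while the other satisfies $t\,L\big(\tfrac{x+R}{t}\big)\geq R\ln R\,(1+o(1))$. Hence
\[
R\,I_\infty(x/R,t/R)=\min\Big\{(R-x)(1+o(1))\,,\ (x+R)\ln\tfrac{x+R}{t}\,(1+o(1))\Big\}=R\,(1+o(1))
\]
\emph{uniformly in $|x|\leq M$, irrespective of the sign of $x$}: since $R\ll R\ln R$, the minimum always selects the exit governed by the exponential tail, and the sign of $x$ only perturbs the arguments by $O(M)$, which is invisible at leading order. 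Theorem~\ref{thm:est-IR} therefore yields $|u-u_R|\leq\e^{-R(1+o(1))}$ on \emph{both} sides of the origin. This proves the second displayed inequality (indeed on either side), but it cannot produce the stronger bound $\e^{-R\ln R\,(1+o(1))}$ claimed for $0<x<M$: that would require the minimum to be attained at the endpoint where $L(q)\sim q\ln q$, which your own step 2 shows is exactly the endpoint it avoids. Your assertion that for $x>0$ ``the contribution governed by the compactly supported tail'' wins is backwards relative to the computation that precedes it.

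So the first inequality does not follow from the argument as you have set it up, and no bookkeeping of the minimum will close the gap: the rate function $I_\infty$ of Theorem~\ref{thm:est-IR} inherently retains the cheaper, single-large-jump escape route through the exponentially decaying side of $J$, and for $x$ in a fixed bounded set both exits are at distance $R(1+o(1))$, so that route dominates whatever the sign of $x$. To obtain a genuinely different rate on $\{x>0\}$ one would need either an additional argument excluding that escape route, or a reinterpretation of the statement (for instance, points at distance $o(R)$ from the relevant endpoint of $(-R,R)$, where only one exit contributes). As written, the proof of the first estimate is incomplete at its decisive step.
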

\begin{proof}
	We just come back to the expression of $I_\infty$:
	$$
		I_\infty(x/R,t/R)=\min_{y\in\partial B_R}\frac{t}{R}\,L\Big(\frac{Rx-y}{t}\Big)\,.
	$$
	So, if $1<x<M$, whether $y=-R$ or $y=+R$, we always have $Rx-y\geq (M-1)R\to+\infty$.
	Hence the min of $L$ is attained for $y=-R$ and we recover the behaviour of $L(q)$ for
	$q=R(M+1)\to\infty$, that is, a $R\ln R$ behaviour.
	On the other hand, if $-M<x<-1$, the min is attained for $y=-R$ and
	we get the linear behaviour of $L(q)$ for $q\to-\infty$.
\end{proof}

\begin{figure}[ht]
  \begin{center}
  \includegraphics[width=8cm]{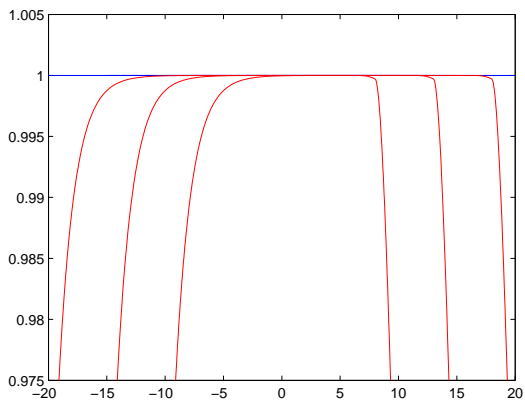}
  \caption{Convergence for non-symmetric kernel.}
  \label{fig:nosimetrico}
  \end{center}
\end{figure}

In Figure~\ref{fig:nosimetrico} we plot for $u_0=1$ the approximations $u_R$ for $R=10$, $15$, $20$ and non symmetric $J$ as in Proposition~\ref{prop:J:nonsymmetric}. This illustrates the different rate of convergence whether $x<0$ or $x>0$.
\subsection{KPP-type results}

In this section, we briefly explain how our results allow to treat a non-local version of
the KPP-problem (Kolmogorov-Petrovskii-Piskounov) associated to equation~\eqref{eq:0} with the classical monostable $u(1-u)$-term. For simplicity we shall just explain this on the following equation:
\begin{equation*}\label{eq:KPP-0}
	\partial_t u - (J*u-u) = u(1-u)\quad\text{in}\quad\R^N\times[0,T]\,,
\end{equation*}
with a continuous initial data $u_0(x)=g(x)$, $0\leq g\leq 1$.  Existence of solutions with
initial data $0\leq g\leq 1$ may be obtained for instance by Perron's method.

The interested reader will find further references about this equation and traveling waves
in the works of J.~Coville and L.~Dupaigne \cite{CovilleDupaigne}.

Now, in order to study convergence of
$u$ to the equilibrium states $0$ and $1$ for large $x$ and $t$, the following scaling
is widely used:
\begin{equation*}
	u^\eps(x,t)=u\Big(\frac{x}{\eps},\frac{t}{\eps}\Big)\,.
\end{equation*}
It turns out here that formally, $u^\eps$ satisfies the equation
\begin{equation*}
	\partial_t u^\eps - \frac{1}{\eps}(J_\eps*u^\eps-u^\eps) = \frac{u^\eps(1-u^\eps)}{\eps}
	\quad\text{in}\quad\R^N\times[0,T]\,,
\end{equation*}
with $J_\eps(x)=\eps^NJ(x/\eps)$, so that one may use exactly the same method as was used
in Section~\ref{sect:theoretical} with $\eps$ playing the role of $1/R\to0$.

We may thus combine the techniques of \cite{BarlesSouganidis} with the ones we used in Section \ref{sect:theoretical} to handle the convergence of the non-local term, which give us some estimates at which convergence to the states $u=0$ and $u=1$ occur. In fact, once we know how to deal with the non-local terms of the equation, the rest of the proof only follows \cite{BarlesSouganidis}, this is why we only sketch a proof below. Here we denote by $H$ the following Hamiltonian:
$$H(p):=\int_{\R^N}\Big(\e^{p\cdot y}-1\Big)J(y)\dy\,,$$
where it is assumed that $J\in\mathrm{C}(\R^N)$ and satisfies \eqref{eq:hyp3:J} for some $\beta_0>0$.

\begin{theorem} Let $G_0=\{g(x)=0\}$ and $G_1=\{g(x)=1\}$. Then the following results hold:
	\noindent\emph{(i)} Let $I^\eps_0=-\eps\ln (u^\eps)$. Then $I^\eps_0\to I_0$, the solution
	of the variational inequality:
	\begin{equation*}
		\min\Big(\frac{\partial I_0}{\partial t}+H(D I_0) + 1\,,\,I_0\Big)=0\quad\text{in}
		\quad \R^N\times(0,\infty)\,,
	\end{equation*}
	and
	\begin{equation*}
		I_0(x,0)=\begin{cases}
		       	+\infty & \text{if }x\in G_0\,,\\
		       	0 & \text{if }x\in\R^N\setminus G_0\,.
		       \end{cases}
	\end{equation*}

	\noindent\emph{(ii)} Let $I^\eps_1:=-\eps\ln(1-u^\eps)$. Then $I^\eps_1\to I_1$
	locally uniformly in the set $\mathcal{C}=\big\{I_0(x,t)=0\big\}$, where
	$$	
    \begin{cases}
		\dfrac{\partial I_1}{\partial t}+H(D I_1) + 1=0&\quad\text{in}\quad \mathop{\rm int}\mathcal{C}\\[6pt]
		I_1(x,t)=0 &\quad{on}\quad \partial\mathcal{C}\cap\{t>0\}\\[6pt]
		 I_1(x,0)=+\infty\ind{G_1}(x).
		\end{cases}
$$
\end{theorem}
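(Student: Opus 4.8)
The plan is to follow the front--propagation scheme of \cite{BarlesSouganidis}, replacing the local diffusion by the nonlocal one and importing the viscosity stability of the integral term from Section~\ref{subsect:limit}; since the Hamiltonian $H(p)=\int_{\R^N}(\e^{p\cdot y}-1)J(y)\,\mathrm{d}y$ may be infinite outside a ball, the Hamilton--Jacobi parts are understood as in Sections~\ref{sect:theoretical} and~\ref{sect:critical}. First I would record, by comparison, that $0\leq u^\eps\leq 1$ (because $0$ and $1$ are steady states) and, using that $\supp J$ contains a ball so the nonlocal diffusion instantly spreads positivity of both $u^\eps$ and $1-u^\eps$, that $0<u^\eps<1$ for $t>0$ unless $g\equiv 0$ or $g\equiv 1$; thus the logarithmic transforms below are well defined.

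\textbf{Part (i).} Following Section~\ref{subsect:limit}, I would work with the truncations $I_0^{\eps,A}:=-\eps\ln(u^\eps+\e^{-A/\eps})\leq A$ to avoid spurious upper blow--up; since the linear part of the equation is invariant under adding constants, $I_0^{\eps,A}$ satisfies the same equation as $I_0^\eps:=-\eps\ln u^\eps$. A direct computation (insert $u^\eps=\e^{-I_0^\eps/\eps}$, change variables $y=\eps z$ in the convolution, divide by $\eps^{-1}u^\eps$, and use $\int_{\R^N}J=1$) shows that $I_0^\eps$ solves, in the viscosity sense,
\[
\partial_t I_0^\eps+\int_{\R^N}\Big(\e^{-(I_0^\eps(x-\eps z,t)-I_0^\eps(x,t))/\eps}-1\Big)J(z)\,\mathrm{d}z+1-u^\eps=0.
\]
Formally $\e^{-(I_0^\eps(x-\eps z)-I_0^\eps(x))/\eps}\to\e^{z\cdot DI_0}$, while $u^\eps\to0$ on $\{I_0>0\}$ but need not vanish where $I_0=0$, so the natural limit is exactly the variational inequality in (i), with the initial layer read off from $I_0^\eps(\cdot,0)=-\eps\ln g\to0$ on $\R^N\setminus G_0$ and $\to+\infty$ on $G_0$. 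I would then introduce the half--relaxed limits $\overline I_0:=\limsup_{\eps\to0}{}^{*}I_0^{\eps,A}$ and $\underline I_0:=\liminf_{\eps\to0}{}_{*}I_0^{\eps,A}$, prove they are respectively a sub-- and a supersolution of that variational inequality (with the prescribed initial trace), invoke a comparison principle for this obstacle problem with convex coercive $H$ to get $\overline I_0\leq\underline I_0$, and finally let $A\to+\infty$, exactly as in Section~\ref{sect:theoretical} and for~\eqref{pb:limitA}, to identify $I_0$ and obtain local uniform convergence.

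\textbf{Part (ii).} On $\mathcal C=\{I_0=0\}$ I would set $v^\eps:=1-u^\eps$ and $I_1^\eps:=-\eps\ln v^\eps$; using $\int_{\R^N}J=1$, $v^\eps$ solves a scaled nonlocal equation with a \emph{stable} reaction term, and the same log--transform yields a viscosity equation for $I_1^\eps$ whose formal limit, on the region where $v^\eps\to0$, is the Hamilton--Jacobi equation appearing in (ii). The lateral condition $I_1=0$ on $\partial\mathcal C\cap\{t>0\}$ comes from the fact that $\partial\mathcal C$ is precisely the interface between $u^\eps\to1$ (where $v^\eps\to0$ sub--exponentially, i.e.\ $I_1=0$) and $u^\eps\to0$; and $I_1^\eps(\cdot,0)=-\eps\ln(1-g)\to0$ off $G_1$ and $\to+\infty$ on $G_1$, giving $I_1(\cdot,0)=+\infty\ind{G_1}$. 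Convergence of $I_1^\eps$ to $I_1$, locally uniformly in $\mathrm{int}\,\mathcal C$, follows again by half--relaxed limits and comparison for the Cauchy--Dirichlet problem for $\partial_tI_1+H(DI_1)+1=0$ in $\mathcal C$, the integral term being handled as in part (i).

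\textbf{Main obstacle.} The only genuinely new step beyond \cite{BarlesSouganidis}, and the one I expect to cost the most, is the viscosity stability of the nonlocal term: tested against a smooth $\varphi$ at a local extremum of $I^\eps-\varphi$, one must show $\int_{\R^N}(\e^{-(I^\eps(x-\eps z)-I^\eps(x))/\eps}-1)J(z)\,\mathrm{d}z\to H(D\varphi)$. This is carried out exactly as in Section~\ref{subsect:limit}: split $\int_{\R^N}=\int_{|z|<\delta}+\int_{\delta\leq|z|<M}+\int_{|z|\geq M}$, bound the first by $C\|D^2\varphi\|_\infty\int_{|z|<\delta}(1\wedge|z|^2)J(z)\,\mathrm{d}z\to0$, control the far tail via $0\leq u^\eps\leq1$ and \eqref{eq:hyp3:J} (which replaces the $\|u_0\|_\infty$--bound used there), and pass to the limit in the middle piece by a Taylor expansion of $\varphi$. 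Two secondary points are the comparison principle for the variational inequality of (i) with the singular datum $+\infty$ on $G_0$ (handled by the $A$--truncation, as for~\eqref{pb:limitA}) and the identification of the free boundary $\partial\mathcal C$ and of the boundary value $I_1=0$ there, which needs a mild regularity assumption on $G_0,G_1$; once these are in place, the remainder is a routine transcription of \cite{BarlesSouganidis}.
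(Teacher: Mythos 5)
Your part (i) and your treatment of the nonlocal term reproduce the paper's argument: rescale the kernel, truncate with $I_0^{\eps,A}=-\eps\ln(u^\eps+\e^{-A/\eps})$, pass to half-relaxed limits using the $\delta$/$M$ splitting of the integral exactly as in Section~\ref{subsect:limit}, read the reaction term as $-(1-u^\eps)\to -1$ wherever the limit is positive, identify the obstacle problem, compare, and let $A\to\infty$. This part is consistent with what the paper does.

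Part (ii) contains a genuine gap. You propose to conclude ``by half-relaxed limits and comparison for the Cauchy--Dirichlet problem for $\partial_t I_1+H(DI_1)+1=0$ in $\mathcal{C}$.'' But $\mathcal{C}=\{I_0=0\}$ is a free boundary produced by part (i), and no regularity of $\partial\mathcal{C}$ is available; a comparison principle on such a domain, with the lateral condition $I_1=0$ on $\partial\mathcal{C}$ understood in the relaxed viscosity sense, is exactly what cannot be justified. The paper sidesteps this: it shows that $\overline{I_1^A}$ and $\underline{I_1^A}$ are sub- and supersolutions of a \emph{global} variational inequality $\max\big(\partial_t I+H(DI)+1,\ I-\psi^A\big)=0$ in $\R^N\times(0,\infty)$, where the obstacle $\psi^A$ encodes $\mathcal{C}$, explicitly remarks that direct comparison between $\overline{I_1^A}$ and $\underline{I_1^A}$ cannot be derived for lack of regularity of $\mathcal{C}$, and instead identifies the limit through a representation (Lax--Oleinik/optimal control) formula for $I_1^A$, following \cite{BarlesSouganidis}. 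Your fallback of ``a mild regularity assumption on $G_0,G_1$'' is not among the theorem's hypotheses, and regularity of the initial sets would not by itself give regularity of $\partial\mathcal{C}$ at positive times. To repair part (ii), replace the Dirichlet-comparison step by the obstacle-problem formulation plus the representation formula.
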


\begin{proof}[Sketch of proof]
	For (i), it turns out that $u^\eps$ satisfies the equation
\begin{equation*}\label{eq:KPP-1}
	\partial_t u^\eps - \frac{1}{\eps}(J_\eps*u^\eps-u^\eps) = \frac{u^\eps(1-u^\eps)}{\eps}
	\quad\text{in}\quad\R^N\times[0,T]\,,
\end{equation*}
with initial data $u^\eps(x,0)=g(x/\eps)$, where the rescaled kernel is $J_\eps(x)=\eps^NJ(x/\eps)$.

Then we make the log-transform
\begin{equation*}
	I^{\eps,A}_0(x,t)=-\eps\ln \Big(u^\eps(x,t)+\exp\big(-\frac{A}{\eps}\big)\Big)
\end{equation*}
which satisfies
\begin{equation*}
	\partial_t I^{\eps,A}_0 +
	\int\Big(\e^{-\frac{1}{\eps}\big\{I^{\eps,A}_0(x+y\eps)-I^{\eps,A}_0(x)\big\}}-1\Big)J(y)\dy
	= -\frac{u^\eps(1-u^\eps)}{u^\eps + \exp(-A/\eps)}
\end{equation*}
Passage to the limit in the left-hand side is done exactly as in Section~\ref{sect:theoretical} while
handling the right-hand side follows exactly from KPP classical techniques: first notice that
by construction, $I^{\eps,A}_0\geq0$. Then, if in the limit
$I_0^{A}(x,t)=\lim_{\eps\to0}I^{\eps,A}_0>0$, this means that $u^\eps\to0$ so that
clearly as $\eps\to0$, the right-hand side converges to $1$.

For (ii), we set similarly
\begin{equation*}
	I^{\eps,A}_1(x,t)=-\eps\ln \Big(1-u^\eps(x,t)+\exp\big(-\frac{A}{\eps}\big)\Big)
\end{equation*}
The first step consists in proving that $\overline{I^A_1}:=\limsup_{\eps\to0}I^{\eps,A}_1$
and $\underline{I^A_1}:=\liminf_{\eps\to0}I^{\eps,A}_1$ are respectively sub- and super-solutions
of the variational inequality:
\begin{eqnarray*}
	\max\Big(\partial_t I+H(D I)+1,I-\psi^A\Big)&=&0\quad\text{in}\quad\R^N\times(0,\infty)\,,\\
	I(x,0)&=&A\cdot\ind{G_1}(x),
\end{eqnarray*}
where $\psi^A(x,t)=1$ if $(x,t)\in\mathcal{C}$ and $0$ otherwise. Direct comparison
between $\overline{I^A_1}$ and $\underline{I^A_1}$ cannot be derived here since no
information on the regularity of $\mathcal{C}$ is available. The final result then
follows from a representation formula for $I^A_1$. We refer to \cite{BarlesSouganidis} for the details.
\end{proof}

Then, coming back to the original variables, one can obtain explicit exponential convergence rates,
which follows from our study of the asymptotic behaviour of the Lagrangian associated to $H$
(as was done in Sections \ref{sect:compact}, \ref{sect:intermediate}, \ref{sect:critical}).

\subsection{Relation with optimal existence results}

We would like to add another final comment on a related subject.
As was said, by estimating $\sup_{B_R}|u-u_R|(x,T)$, we are measuring the total amount of
processes that can escape the box $B_R$ between $t=0$ and $t=T$. Another way of understanding
this is that we are somehow estimating the Green kernel associated to the equation.

Thus in \cite{BrandleChasseigneFerreira09}, the authors together with R. Ferreira are deriving similar
estimates but in the context of optimal initial data, which is also a way of measuring
the behaviour of the kernel at infinity. Hence it is not so surprising that similar estimates
appear, even if they are obtained through a totally different method.

For instance, it turns out that if $J$ is compactly supported,
the optimal class of existence for $u_t=J\ast u -u$ in $\R^N$ consists of initial data satisfying:
$$|u_0(x)|\leq C\e^{|x|\ln |x|}\,,$$
hence we recover a $R\ln R$ estimate for the Green function, typical of compactly supported kernels.
We refer to \cite{BrandleChasseigneFerreira09} for more results in this direction.

{\small \begin{center}
\textsc{Acknowledgments}

Both authors partially supported by project MTM2008-06326-C02-02 (Spain).
\end{center}}


\end{document}